\documentclass[11pt,a4paper,reqno]{amsart}

\usepackage{color}
\usepackage{amsmath,enumitem}
\usepackage{amsfonts}
\usepackage{amssymb}
\usepackage{amsbsy}
\usepackage{amscd}
\usepackage{amsthm}
\usepackage{mathrsfs}
\usepackage[english]{babel}
\usepackage{graphicx}
\usepackage[all]{xy}
\usepackage{mathtools}

\usepackage{tikz} 
\usetikzlibrary{calc,intersections,through,
  backgrounds,arrows,decorations.pathmorphing,backgrounds,positioning,
fit,petri}   
\usetikzlibrary{calc}
\usepackage{tikz-cd} 

\usepackage{ytableau}

\setlength{\oddsidemargin}{0cm}
\setlength{\evensidemargin}{0cm}
\setlength{\textwidth}{16cm}
\setlist{nosep}

\usepackage[active]{srcltx}
\usepackage[parfill]{parskip}

%\thispagestyle{empty} 

%\numberwithin{equation}{section}
%\renewcommand{\theequation}{\thesection.\arabic{equation}}

%%%%%%%%%%%%%%%%%%%%%%%%%%%%%%%%%%%%%%

%\DeclareMathAlphabet{\mathpzc}{OT1}{pzc}

\newtheorem{theorem}{Theorem}[section]
\newtheorem{lemma}[theorem]{Lemma}
\newtheorem{definition}[theorem]{Definition}
\newtheorem{corollary}[theorem]{Corollary}
\newtheorem{proposition}[theorem]{Proposition}
\newtheorem{remark}[theorem]{Remark}

\newcommand{\Hom}{{\mathrm{Hom}}}

\newcommand{\id}{{\rm id}}

\newcommand{\eins}{\leavevmode\hbox{\small1\kern-3.8pt\normalsize1}}

\newcommand{\residue}{{\rm res}}
\newcommand{\content}{{\rm con}}

\newcommand{\mN}{\mathbb{N}}

\newcommand{\mK}{\mathbb{K}}
\newcommand{\mk}{\Bbbk}
\newcommand{\ZZ}{\mathbb{Z}}

\newcommand{\ft}{{\mathfrak t}}

%\newcommand{\gl}{{\mathfrak {gl}}}

%Calligraphic symbols are compactly denoted 

\DeclarePairedDelimiter\abs{\lvert}{\rvert}%
\DeclarePairedDelimiter\norm{\lVert}{\rVert}%
% Swap the definition of \abs* and \norm*, so that \abs
% and \norm resizes the size of the brackets, and the 
% starred version does not.
\makeatletter
\let\oldabs\abs
\def\abs{\@ifstar{\oldabs}{\oldabs*}}
\let\oldnorm\norm
\def\norm{\@ifstar{\oldnorm}{\oldnorm*}}
\makeatother

\begin{document}
%%%%%%%%%%%%%%%%%%%%%%%%%%%%%%%%%%%%%%%%%%%%%%%%%%%%%%%%

\title{The blocks of the periplectic Brauer algebra in positive characteristic}

\author{Sigiswald Barbier}
\email{Sigiswald.Barbier@UGent.be}
\address{Department of Mathematical Analysis, Faculty of Engineering
  and Architecture, Ghent University, Krijgslaan 281, 9000 Gent, Belgium}
\author{Anton Cox}
\email{A.G.Cox@city.ac.uk}
\author{Maud De Visscher}
\email{Maud.Devisscher.1@city.ac.uk}
\address{Mathematics Department, City, University of London,
  Northampton Square, London, EC1V 0HB, England}
\date{7 February 2019}

\begin{abstract}
We determine the blocks of the periplectic Brauer algebra over any
field of odd positive 
characteristic. 
\end{abstract}

\maketitle
\section{Introduction}
The periplectic Brauer algebra belongs to a class of algebras which
can be represented using diagrams. Other examples of diagram algebras
include the symmetric group algebra, the Hecke algebra, the Temperley-Lieb
algebra and the Brauer algebra.

The periplectic Brauer algebra was first introduced by Moon
\cite{Moon} to study the periplectic Lie superalgebra.  This is
similar to the way Schur-Weyl duality is used to relate representation
theory of the symmetric group to representation theory of the general
linear group and representation theory of the Brauer algebra to the
orthogonal and symplectic Lie algebra or to the encompassing
orthosymplectic Lie superalgebra \cite{BSR, EhrigStroppel,
  LehrerZhang}.

The periplectic Brauer algebra $A_n$ is closely related to the Brauer
algebra $B_n(\delta)$ for $\delta=0$, \cite{Serganova, Kujawa}. For
example, they can both be represented using the same Brauer diagrams
and with multiplication only differing up to a minus sign. So it
should come as no surprise that certain aspects of the representation
theory of the periplectic Brauer algebra resembles the representation
theory of the Brauer algebra. For instance, their simple modules can
be labelled by the same partitions.  However, there are also striking
differences. While the Brauer algebra is cellular, this is no longer
the case for the periplectic Brauer algebra. Also the description of
the blocks in characteristic zero is quite different, and we will show
this is still the case in positive characteristic.

The representation theory of the Brauer algebra has already been
developed for a long time \cite{KX, CDM, CDM2, King, Martin}. In
contrast the  representation theory of the periplectic Brauer
algebra remained unstudied until quite recently. In particular the
simple modules have been classified for arbitrary characteristic
\cite{Kujawa} and for characteristic zero (or large characteristic) a classification of the
blocks \cite{Coulembier} and a complete description of the
decomposition multiplicities \cite{CoulembierEhrig} have been
obtained. 

Calculating the decomposition multiplicities of the (periplectic)
Brauer algebra in positive characteristic is an important open
problem. Since this is related to the long-standing open problem of
the decomposition multiplicities of the symmetric group, a solution to
this problem seems currently not within reach.  Instead, as a first
step, we obtain in this paper a complete classification of the blocks
of the periplectic Brauer algebra in all (odd) positive characteristic. 

There are a number of technical challenges that have to be addressed
along the way. As the periplectic Brauer algebras are not cellular,
we need to work in the setting of standardly based algebras, where a
more limited set of tools are at our disposal. There is also a much
more delicate interplay with the representation theory of the
symmetric group: for example we develop a version of BGG reciprocity
for these algebras, but with a twist arising from the (highly
non-trivial) Mullineux map on representations of the symmetric
group (Theorem \ref{bgg}). 

This leads to a very different classification of blocks from the
classical Brauer case in Theorem~\ref{Block decomposition}. For example we will see that if $n$ is not too
small compared to the characteristic then there is only one block (and
will give a complete classification in terms of certain staircase
partitions in the general case). Note
that this result is better than in the Brauer algebra case where only
the limiting blocks are known in positive characteristic \cite{King}.

This paper is structured as follows. In Section \ref{Sec periplectic
  Brauer algebra} we introduce the periplectic Brauer algebra. As
mentioned, the periplectic Brauer algebra fails to be cellular, but it
still satisfies the weaker notion of a standardly based algebra. In
Section \ref{sba}, we recall the definitions and properties of
standardly based algebras needed in this paper. We also review the
relevant partition combinatorics and representation theory of the
symmetric group algebra in Section \ref{symstuff}.  In Section
\ref{Section standard modules} we combine all this information to
obtain a standard basis of the periplectic Brauer algebra and an
explicit description of the standard modules. Using localisation and
globalisation functors in Section \ref{Section embeddings}, we obtain a full
embedding of the module category $A_n\text{-mod}$ into
$A_{n+2}\text{-mod}$. This can then be used for induction
arguments. In particular, using the results of Sections  \ref{Section standard modules} and \ref{Section
  embeddings} we prove a  BGG-reciprocity  for the periplectic Brauer algebra in arbitrary characteristic in Section~\ref{Sec
   BGG}. Finally, we obtain a complete description of the blocks of the
periplectic Brauer algebra in any characteristic $p\neq 2$  in Section \ref{Sec blocks}.

\medskip
We will wish to compare the representation theory of our algebras in
characteristic $p$ to characteristic zero, {\it and so will use the
  following conventions throughout this paper}. We will consider a
$p$-modular system $(\mK, R, \mk)$, which means that $R$ is a discrete
valuation ring, $\mK$ is the associated field of fractions, which will
be of characteristic zero, and $\mk$ is the quotient of $R$ by its
maximal ideal, and is a field of characteristic $p>0$.  If we wish to
consider an arbitrary field we will denote it by $k$.

Throughout the paper, unless otherwise stated, all the modules will be
left modules.

\section{The periplectic Brauer algebra} \label{Sec periplectic Brauer algebra}

The periplectic Brauer algebra  was first introduced by Moon
\cite{Moon} in terms of generators and relations. However we will use
the diagrammatic definition due to Kujawa and Tharp
\cite{Kujawa}. This is very similar to the definition of the Brauer
algebra in terms of Brauer diagrams, but with a deformed version of
multiplication as defined below.

An \emph{$(r,s)$-Brauer diagram} is a partition of $r+s$ nodes into
(unordered) pairs.  It can be represented pictorially by $r$ nodes on
a (northern) horizontal line and $s$ nodes on a second (southern)
horizontal line below the first one, with an edge between two
nodes if they belong to the same pair. An edge which connects two
nodes on the northern line is called a \emph{cup}, an edge which connects two
nodes on the southern line is called a \emph{cap}, and an edge which connects
a node on the northern line with a node on the southern line is called
a \emph{propagating line}.  An example of a $(6,8)$-Brauer diagram is given
in Figure \ref{diagex}.

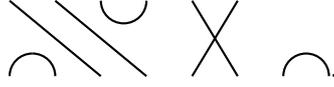
\begin{figure}[ht]
$$
\begin{tikzpicture}[scale=1,thick,>=angle 90]
\begin{scope}[xshift=4cm]
\draw (0,1) -- +(1.2,-1);
\draw (0.6,1) -- +(1.2,-1);
\draw (0,0) to [out=90, in=180] +(0.3,0.3);
\draw (0.6,0) to [out=90, in=0] +(-0.3,0.3);
\draw (1.2,1) to [out=-90, in =180] +(0.3,-0.3) to [out=0, in =-90] +(0.3,0.3);
\draw (2.4,1) -- + (0.6,-1);
\draw (3,1) -- + (-0.6,-1);
\draw (3.6,0) to [out=90, in=180] +(0.3,0.3) to [out=0, in=90] +(0.3,-0.3);
\end{scope}
\end{tikzpicture}.
$$
\caption{An example of a $(6,8)$-Brauer diagram}\label{diagex}
\end{figure}

To compose an $(r,s)$-Brauer diagram $d_1$ with an $(s,t)$-Brauer
diagram $d_2$ we will need the notion of a marked Brauer diagram. We
decorate Brauer diagrams with certain markings as follows, and will
choose a preferred decoration to be the standard marking. 
  
To produce a \emph{marked Brauer diagram} we choose a marking for each cup
and cap, and a linear order on them, as follows. Each 
cup is marked with a diamond $\Diamond$, and each cap with either a
left arrow $\lhd$ or a right arrow $\rhd$. Given an arbitrary linear
ordering on the caps and cups, we depict this by placing the
markings on different latitudes between the northern and southern
horizontal lines, such that a cup or cap which is larger than another
cup or cap has the more northerly latitute of the two. (In order to do
this we may need to deform our Brauer diagram, but as these are only
considered up to isotopy this does not affect the definition.) We say
that two markings are \emph{adjacent} if there is no 
other marking between them in this order.

We can now define a \emph{standard marking} for each Brauer diagram.  First
we choose to mark all caps with right arrows. For our ordering, 
we set all the cups to be larger than all the caps. Then we say that
one cup is larger than another cup if its leftmost node is to the left
of the leftmost node of the other cup.  Finally we say that a cap is
larger than another 
cap if its leftmost node is to the right of the leftmost node of the
other cap. 

Two examples of Brauer diagrams with markings are given in Figure
\ref{secondex}.  The righthand diagram has the standard marking.

\begin{figure}[ht]
$$
\begin{tikzpicture}[scale=1,thick,>=angle 90]
\begin{scope}[xshift=0cm]
\draw (0,1) to [out=-20, in=120]   +(1.2,-1);
\draw (0.6,1) to [out=-60, in=160]   +(1.2,-1);
\draw (0,0) to [out=90, in=180] +(0.3,0.7)to [out=0, in=90] +(0.3,-0.7);
\draw (0.3, 0.7) node[]{$\lhd$};
\draw (1.2,1) to [out=-90, in =180] +(0.3,-0.7) to [out=0, in =-90] +(0.3,0.7);
\draw (1.5, 0.3) node[]{$\Diamond$};
\draw (2.4,1) -- + (0.6,-1);
\draw (3,1) -- + (-0.6,-1);
\draw (3.6,0) to [out=90, in=180] +(0.3,0.2) to [out=0, in=90] +(0.3,-0.2);
\draw (3.9, 0.2) node[]{$\rhd$};
\end{scope}
\begin{scope}[xshift=8cm]
\draw (0,1) -- +(1.2,-1);
\draw (0.6,1) -- +(1.2,-1);
\draw (0,0) to [out=90, in=180] +(0.3,0.2);
\draw (0.6,0) to [out=90, in=0] +(-0.3,0.2);
\draw (0.3, 0.2) node[]{$\rhd$};
\draw (1.2,1) to [out=-90, in =180] +(0.3,-0.3) to [out=0, in =-90] +(0.3,0.3);
\draw (1.5, 0.7) node[]{$\Diamond$};
\draw (2.4,1) -- + (0.6,-1);
\draw (3,1) -- + (-0.6,-1);
\draw (3.6,0) to [out=90, in=180] +(0.3,0.4) to [out=0, in=90] +(0.3,-0.4);
\draw (3.9, 0.4) node[]{$\rhd$};
\end{scope}
\end{tikzpicture}
$$
\caption{A marked and a standardly marked Brauer diagram}
\label{secondex}
\end{figure}
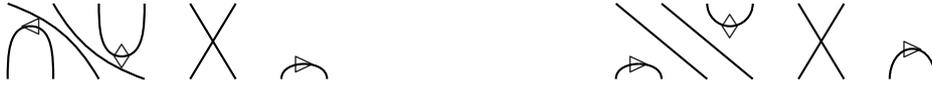

Using these markings we define the composition $d_1 d_2$ of Brauer
diagrams $d_1$ and $d_2$ as follows. If the number of nodes on the
southern line of $d_1$ is different from the number of nodes on the
northern line of $d_2$, we set $d_1d_2$ equal to zero. Otherwise we
concatenate the two Brauer diagrams by identifying the northern
horizontal line of $d_2$ with the southern line of $d_1$ to obtain a
new diagram $d_1\star d_2$. If this new diagram contains closed loops,
we set $d_1 d_2$ zero. Otherwise $d_1\star d_2$ is again a Brauer diagram
and we set
\begin{align} \label{Composition Brauer diagrams}
 d_1 d_2 = (-1)^{\gamma(d_1,d_2)} d_1 \star d_2
\end{align}
where $\gamma(d_1,d_2)$ is defined as follows.

First equip $d_1$ and $d_2$ with their standard marking. This will
give us a decoration on $d_1 \star d_2$, possibly with more than one
marking on the same edge. We can make this into a standard marking by
combinations of the following two operations: (i) if two markings are
adjacent, we switch their order, and (ii) if an arrow and a diamond
are adjacent and on the same edge, we remove both markings.  Then
$\gamma(d_1,d_2)$ counts the number of switching operations of type
(i) and the number of cancelling operations of type (ii) \emph{where the
arrow points away from the diamond} which are needed to obtain a
standard marking on $d_1 \star d_2$. Of course there may be many
different ways to obtain the standard marking, but it is shown in
\cite{Kujawa} that $(-1)^{\gamma(d_1,d_2)}$ is independent of the
chosen operations.

\begin{definition}
Let $k$ be a field. The periplectic Brauer algebra $A_n$ is the
$k$-algebra with basis given by $(n,n)$-Brauer diagrams, and
multiplication given by linear extension of the composition of
Brauer diagrams defined in \eqref{Composition Brauer diagrams}.
\end{definition}

We may consider the category whose objects are natural numbers and
where morphisms between $r$ and $s$ are given by $(r,s)$-Brauer
diagrams. This can be given the structure of a (strict) monoidal supercategory in the sense of \cite{BrundanEllis} by
defining the tensor product of an $(r,s)$-Brauer diagram with an
$(r',s')$-Brauer diagram as follows. We concatenate horizontally the first diagram with  $r'$ (non-crossing) propagating lines on the right and concatenate the second diagram with $s$ (non-crossing) propagating lines on the left. Then we take the composition defined in \eqref{Composition Brauer diagrams} of the $(r+r',s+r')$ and $(s+r', s+s')$ diagrams obtained in this way. This gives us a $(r+r',s+s')$-Brauer diagram with a possible minus sign. 
Then the monoidal supercategory is generated by the elements $I$, $X$,
$\cap$ and $\cup$ where $I$ is the unique $(1,1)$-Brauer diagram,
$\cap$ and $\cup$ are the unique $(0,2)$- and $(2,0)$-Brauer diagrams,
and $X$ is the $(2,2)$-Brauer diagram with two propagating lines that
cross precisely once \cite[Theorem 3.2.1]{Kujawa}. 

\begin{definition}\label{antiauto} We define a contravariant autoequivalence $\phi$ on this
  category as follows.  The map $\phi$ fixes objects, and we set 
\begin{align*}
\phi(I)=I, \qquad, \phi(X)=-X, \qquad \phi(\cap)=\cup,\qquad \phi(\cup) =-\cap
\end{align*}
and require $\phi(ab)= \phi(b)\phi(a)$ and $\phi(a\otimes b)=
\phi(a)\otimes\phi(b)$. 
One can check that $\phi$ is well-defined and induces an algebra
anti-automorphism $\phi$ on $A_n$, see 
\cite[Section 2.1.6]{Coulembier}.
\end{definition}

The symmetric group algebra ${\rm H}_n = k\mathfrak{S}_n$ appears as the subalgebra of $A_n$ spanned by all diagrams with no caps (or cups). It also appears as the quotient of $A_n$ by the ideal generated by all diagrams
containing at least one cap (or cup).  It is easy to see that the anti-automorphism $\phi$ induces an
anti-automorphism $\phi$ on this subalgebra or quotient given by
$$\phi(w) =(-1)^{\ell(w)} w^{-1}$$
for all $w\in \mathfrak{S}_n$.

The study of the representation theory of the Brauer algebras in
\cite{CDM, CDM2} was based on the fact that these algebras were cellular
(and frequently even quasihereditary). Unfortunately there is no
obvious cellular structure in the marked Brauer case. However
in \cite[Theorem 4.1.2]{Coulembier}, it is shown that $A_n$ is a
standardly based algebra. This will be the general framework in which
we need to work, and so we now recall the definition and some of the
properties of such classes of algebras.

\section{Standardly based algebra}\label{sba}

We recall the notion of standardly based algebras introduced in
\cite{DuRui}.  In this section we work over an arbitrary field
$k$. Let $(\Lambda, \geq)$ be a poset, $A$ be a finite dimensional
$k$-algebra and $\mathcal{B}$ be a basis for $A$. We say that $(A,
\mathcal{B})$ is a \emph{based algebra} if we can write $\mathcal{B}$
as a disjoint union of subsets $\mathcal{B}^\lambda$ for $\lambda \in
\Lambda$ such that for all $a\in A$ and $b\in \mathcal{B}^\lambda$ we
have that $ab$ and $ba$ can be written as linear combinations of basis
elements $c\in \mathcal{B}^\mu$ with $\mu \geq \lambda$. This allows
us to define two-sided ideals of $A$ for each $\lambda \in \Lambda$,
namely $A^{\geq \lambda}$ spanned by $\cup_{\mu \geq \lambda}
\mathcal{B}^\mu$ and $A^{> \lambda}$ spanned by $\cup_{\mu > \lambda}
\mathcal{B}^\mu$. We define $A^{\lambda}$ to be the $(A,A)$-bimodule
$A^{\geq \lambda} / A^{> \lambda}$. We abuse notation and view
$A^\lambda$ as the $k$-span of $\mathcal{B}^\lambda$.  If we assume
further that for each $\lambda \in \Lambda$ we have indexing sets
$I(\lambda)$ and $J(\lambda)$ such that
$$\mathcal{B}^\lambda = \{ a_{ij}^\lambda \, | \, (i,j)\in
I(\lambda)\times J(\lambda)\}$$ 
and for each $a\in A$ and  $a_{ij}^\lambda$ we have
\begin{eqnarray*}
&& a \, a_{ij}^\lambda = \sum_{i'\in I(\lambda)} f_{i', \lambda}(a,i)
  \, a_{i'j}^\lambda \, {\rm mod} A^{> \lambda} \,\, \mbox{and} \\ 
&& a_{ij}^\lambda \, a = \sum_{j'\in J(\lambda)} f_{\lambda,
    j'}(j,a)\, a_{ij'}^\lambda \, {\rm mod} A^{> \lambda}, 
\end{eqnarray*}
then we say that $(A, \mathcal{B})$ is a \emph{standardly based algebra}.

Note that if, in addition, we have an algebra anti-involution $\psi$
such that $\psi(a_{ij}^\lambda) = a_{ji}^\lambda$ then
$(A,\mathcal{B})$ is a cellular algebra as defined in \cite{GL}.

Now for each $(i_0, j_0)\in I(\lambda) \times J(\lambda)$ we can
define the left $A$-module $\Delta(\lambda , j_0)$ (resp. the right
$A$-module $\Delta(i_0,\lambda)$) to be the $k$-span of $\{
a_{ij_0}^\lambda \, | \, i\in I(\lambda)\}$ (respectively of $\{
a_{i_0j}^\lambda \, | \, j\in J(\lambda)\}$). As these modules are
clearly independent of the choice of $(i_0,j_0)$ we write
$\Delta(\lambda) = \Delta(\lambda, j_0)$ and $\Delta^{\rm op}(\lambda)
= \Delta(i_0, \lambda)$ and call them the left, respectively right,
\emph{standard modules} for $A$. By definition, we have an isomorphism
of $(A,A)$-bimodules $A^{\lambda} \cong \Delta(\lambda) \otimes_{k}
\Delta^{\rm op} (\lambda)$.

It is shown in \cite[2.4]{DuRui} that there is a subset $\Lambda'
\subseteq \Lambda$ (defined in terms of a bilinear form on standard
modules) such that for all $\lambda \in \Lambda'$ we have that
$L(\lambda) := \Delta(\lambda)/{\rm rad}\Delta(\lambda)$ is simple and
moreover $\{ L(\lambda) \, | \, \lambda \in \Lambda'\}$ forms a
complete set of pairwise non-isomorphic simple (left) $A$-modules. For
each $\lambda \in \Lambda'$ we denote by $P(\lambda)$ the projective
cover of $L(\lambda)$.

\begin{proposition}\label{standardproperties} \cite[(2.4.1),(2.4.4)]{DuRui}
Let $\lambda\in \Lambda'$ and $\mu \in \Lambda$.
\begin{enumerate}
\item The composition multiplicity $[\Delta(\mu):L(\lambda)]$ satisfies
$$[\Delta(\mu):L(\lambda)] \neq 0 \,\, \mbox{implies that} \,\,
  \lambda \leq \mu$$ 
and $[\Delta(\lambda):L(\lambda)] = 1$.
\item The projective indecomposable module $P(\lambda)$ has a
  filtration by standard modules. If we denote by
  $(P(\lambda):\Delta(\mu))$ the number of sections isomorphic to
  $\Delta(\mu)$ in this filtration then we have
$$(P(\lambda):\Delta(\mu)) = \dim (\Delta^{\rm op}(\mu)\otimes_A P(\lambda)).$$
In particular we have
$$(P(\lambda):\Delta(\mu)) \neq 0 \,\, \mbox{implies that} \,\, \mu
\geq \lambda$$ 
and $(P(\lambda):\Delta(\lambda)) = 1$.
\end{enumerate}
\end{proposition}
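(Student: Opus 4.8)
The plan is to extract everything from the ideal filtration of $A$ together with the bilinear form that cuts out $\Lambda'$. The central tool I would isolate first is a \emph{support lemma}: for any $\nu,\kappa\in\Lambda$, every element of $\mathcal{B}^\nu$ annihilates the standard module $\Delta(\kappa)$ unless $\nu\leq\kappa$. This is immediate from the based-algebra axioms. For $b\in\mathcal{B}^\nu$ and a spanning vector $a_{ij_0}^\kappa$ of $\Delta(\kappa)$, the product $b\,a_{ij_0}^\kappa$ lies in $A^{\geq\nu}\cap A^{\geq\kappa}$, and on reducing modulo $A^{>\kappa}$ to land in $\Delta(\kappa)$ only its $\mathcal{B}^\kappa$-component survives, which can be nonzero only when $\kappa\geq\nu$. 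In particular $A^{>\kappa}$ annihilates $\Delta(\kappa)$, so each standard module and each of its subquotients is a module over $A/A^{>\kappa}$. I would record the mirror statement for the right modules $\Delta^{\rm op}(\kappa)$ at the same time.

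For part (1) I would combine this with two facts about the bilinear form (call it $\phi_\lambda$). Expressing the multiplication of two elements of $\mathcal{B}^\lambda$ through $\phi_\lambda$, one sees that for $\lambda\in\Lambda'$ (so $\phi_\lambda\neq0$) the set $\mathcal{B}^\lambda$ acts nontrivially on $\Delta(\lambda)$, while it annihilates $\mathrm{rad}\,\Delta(\lambda)$, which by the Du--Rui construction is exactly the radical of $\phi_\lambda$. Since $L(\lambda)=\Delta(\lambda)/\mathrm{rad}\,\Delta(\lambda)$ and $\mathcal{B}^\lambda$ kills the radical but not the whole of $\Delta(\lambda)$, its image in the quotient $L(\lambda)$ is nonzero, so $\mathcal{B}^\lambda$ does not annihilate $L(\lambda)$. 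Triangularity now follows formally: if $L(\lambda)$ is a composition factor of $\Delta(\mu)$ it is a subquotient, so $\mathcal{B}^\lambda$ acts nontrivially on $\Delta(\mu)$, and the support lemma forces $\lambda\leq\mu$. Multiplicity one is the sharper point: because $\mathcal{B}^\lambda$ kills $\mathrm{rad}\,\Delta(\lambda)$ it kills every composition factor occurring there, so $L(\lambda)$ cannot occur inside $\mathrm{rad}\,\Delta(\lambda)$; as it occurs once in the head, $[\Delta(\lambda):L(\lambda)]=1$.

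For part (2) I would start from the bimodule filtration of $A$ by the ideals $A^{\geq\nu}$, whose sections are $A^\nu\cong\Delta(\nu)\otimes_k\Delta^{\rm op}(\nu)$; as a left module this exhibits $A$ with a filtration by standard modules. Choosing a primitive idempotent $e_\lambda$ with $P(\lambda)=Ae_\lambda$, I would apply the exact functor $-\,e_\lambda$ (right multiplication by the idempotent, a projection onto a summand) to this filtration. This directly produces a filtration of $P(\lambda)$ whose $\nu$-section is $A^\nu e_\lambda\cong\Delta(\nu)\otimes_k(\Delta^{\rm op}(\nu)e_\lambda)\cong\Delta(\nu)^{\oplus\dim\Delta^{\rm op}(\nu)e_\lambda}$, so $P(\lambda)$ is $\Delta$-filtered with $(P(\lambda):\Delta(\mu))=\dim\Delta^{\rm op}(\mu)e_\lambda$. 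The standard identification $\Delta^{\rm op}(\mu)\otimes_A Ae_\lambda\cong\Delta^{\rm op}(\mu)e_\lambda$ then yields the displayed reciprocity formula at once, avoiding any appeal to exactness of $\Delta^{\rm op}(\mu)\otimes_A-$. Finally, since $e_\lambda$ is the primitive idempotent attached to $L(\lambda)$, we have $\dim\Delta^{\rm op}(\mu)e_\lambda=[\Delta^{\rm op}(\mu):L^{\rm op}(\lambda)]$, so the right-module version of part (1) gives both $(P(\lambda):\Delta(\mu))\neq0\Rightarrow\mu\geq\lambda$ and $(P(\lambda):\Delta(\lambda))=1$.

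The routine parts are the bookkeeping in the support lemma and the idempotent truncation. The delicate step, and the one I would write out in full, is the multiplicity-one assertion of part (1): it rests on correctly identifying $\mathrm{rad}\,\Delta(\lambda)$ with the radical of $\phi_\lambda$ and on the compatibility of the $\mathcal{B}^\lambda$-action with that form, which is exactly the content of Du--Rui's construction of $\Lambda'$ and $L(\lambda)$. A secondary point to verify is that $\dim L^{\rm op}(\lambda)e_\lambda=1$, i.e. that $k$ behaves as a splitting field for the relevant endomorphism rings, so that $\dim\Delta^{\rm op}(\mu)e_\lambda$ is genuinely the composition multiplicity and not merely a positive multiple of it.
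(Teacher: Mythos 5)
The paper offers no proof of this proposition at all: it is quoted verbatim from Du--Rui \cite[(2.4.1),(2.4.4)]{DuRui}, so there is nothing internal to compare against. Your reconstruction is essentially Du--Rui's own argument and I find it sound. The support lemma is an immediate and correct consequence of the based-algebra axioms (the product $b\,a^{\kappa}_{ij_0}$ lies in $A^{\geq\nu}\cap A^{\geq\kappa}$, and its $\mathcal{B}^{\kappa}$-component can be nonzero only if $\kappa\geq\nu$); part (1), including multiplicity one, then follows exactly as you say once one grants the two facts you explicitly flag, namely that $\mathrm{rad}\,\Delta(\lambda)$ coincides with the radical of the bilinear form and that the $\mathcal{B}^{\lambda}$-action factors through that form --- both are the content of Du--Rui's construction of $\Lambda'$ and of the simplicity of $L(\lambda)$, which this paper also takes on faith. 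The idempotent-truncation argument for part (2) is clean and correct, and deriving the reciprocity formula from $A^{\nu}e_{\lambda}\cong\Delta(\nu)\otimes_k(\Delta^{\mathrm{op}}(\nu)e_{\lambda})$ is exactly the right move.

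The one step you assert without justification is that $e_{\lambda}A$ has simple head $L^{\mathrm{op}}(\lambda)$, i.e.\ that the left and right labellings of simples by $\Lambda'$ are compatible; this is needed before $\dim\Delta^{\mathrm{op}}(\mu)e_{\lambda}$ can be read as $[\Delta^{\mathrm{op}}(\mu):L^{\mathrm{op}}(\lambda)]$. It is true, and follows from the associativity of the form: if $\beta_{\lambda}(y,e_{\lambda}x)\neq 0$ for some $x\in\Delta(\lambda)$, $y\in\Delta^{\mathrm{op}}(\lambda)$ (which holds because $e_{\lambda}\Delta(\lambda)\not\subseteq\mathrm{rad}\,\Delta(\lambda)$), then $\beta_{\lambda}(ye_{\lambda},x)\neq 0$, so $L^{\mathrm{op}}(\lambda)e_{\lambda}\neq 0$ and the indecomposable projective $e_{\lambda}A$ must have head $L^{\mathrm{op}}(\lambda)$. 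Together with the absolute irreducibility of $L^{\mathrm{op}}(\lambda)$ (which Du--Rui also prove, answering your secondary query about $\dim L^{\mathrm{op}}(\lambda)e_{\lambda}=1$), this closes the argument. With that addition your proof is complete and matches the cited source in strategy.
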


For $\lambda, \mu \in \Lambda'$ we say that the two simple
$A$-modules $L(\lambda)$ and $L(\mu)$ belong to the same
\emph{block} if there is a sequence $\lambda = \lambda_1, \lambda_2,
\ldots , \lambda_t = \mu$ in $\Lambda'$ and a sequence of
indecomposable $A$-modules $M_1, M_2, \ldots M_{t-1}$ such that for
each $1\leq i\leq t-1$ we have that $L(\lambda_i)$ and
$L(\lambda_{i+1})$ appear as composition factors of $M_i$. This gives
an equivalence relation on $\Lambda'$ where each equivalence
class corresponds to a block of $A$. Thus we will abuse notation and
refer to elements $\lambda$ and $\mu$ of $\Lambda'$ as being in the
same block if $L(\lambda)$ and $L(\mu)$ lie in the same block for $A$.

\begin{corollary}\label{stdinP}
Every standard module occurs in the filtration of some projective
indecomposable module, and hence has all composition factors belonging
to a single block.
\end{corollary}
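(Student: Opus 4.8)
The claim is in two parts. The plan is to first establish that every standard module appears in some projective indecomposable, and then deduce that each standard module lies in a single block using the block equivalence relation.

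For the first part, let $\mu \in \Lambda$ be arbitrary. I would invoke Proposition \ref{standardproperties}(2): for any $\lambda \in \Lambda'$, the projective $P(\lambda)$ has a standard filtration, and $(P(\lambda):\Delta(\mu)) = \dim(\Delta^{\rm op}(\mu) \otimes_A P(\lambda))$. The goal is to show $\Delta(\mu)$ occurs in at least one such filtration, i.e.\ that there exists some $\lambda \in \Lambda'$ with $(P(\lambda):\Delta(\mu)) \neq 0$. The cleanest route is to note that the regular module $A$ decomposes as a direct sum of projective indecomposables $P(\lambda)$ (with multiplicities), so the collection of all their standard filtrations collectively refines a standard filtration of $A$ itself. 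Since $A \cong \bigoplus_{\lambda \in \Lambda} A^\lambda$ as a left module (filtered by the based-algebra ideal structure, with each section $A^\lambda \cong \Delta(\lambda) \otimes_k \Delta^{\rm op}(\lambda)$ a direct sum of copies of $\Delta(\lambda)$), every standard module $\Delta(\mu)$ already occurs as a section in the filtration of $A$ by the $A^{\geq \lambda}$. Because the $P(\lambda)$ are the summands of $A$ and their standard filtrations refine this, $\Delta(\mu)$ must occur in $P(\lambda)$ for some $\lambda$. (Alternatively, and perhaps more directly: $\Delta^{\rm op}(\mu) \otimes_A A \cong \Delta^{\rm op}(\mu) \neq 0$, and decomposing $A$ into projectives forces $\Delta^{\rm op}(\mu)\otimes_A P(\lambda) \neq 0$ for some $\lambda$.)

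For the second part, fix $\mu \in \Lambda$ and the projective $P = P(\lambda)$ whose standard filtration contains $\Delta(\mu)$ as a section. Since $P$ is indecomposable, all of its composition factors lie in a single block; this is the standard fact that an indecomposable module is connected in the block sense, which follows immediately from the definition of the block equivalence relation given just before the statement (any two composition factors of the single indecomposable module $P$ are linked). The composition factors of $\Delta(\mu)$ form a subset of the composition factors of $P$, since $\Delta(\mu)$ is a subquotient of $P$. Hence all composition factors of $\Delta(\mu)$ belong to the same block as one another, which is exactly the assertion.

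I do not expect a serious obstacle here; the result is essentially a formal consequence of the standard-filtration property in Proposition \ref{standardproperties} together with the definition of blocks. The one point requiring a little care is the very first step—guaranteeing that an \emph{arbitrary} $\mu \in \Lambda$ (not merely $\mu \in \Lambda'$) actually appears in some projective's filtration. This is handled by the bimodule decomposition $A^\lambda \cong \Delta(\lambda) \otimes_k \Delta^{\rm op}(\lambda)$, which shows every $\Delta(\mu)$ genuinely occurs in the regular module and therefore in one of its projective summands; the nonvanishing $\dim(\Delta^{\rm op}(\mu)\otimes_A P(\lambda)) \neq 0$ for some $\lambda$ makes this precise.
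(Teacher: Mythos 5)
Your proposal is correct and, in substance, identical to the paper's proof: the decisive step in both is that $\dim(\Delta^{\rm op}(\mu)\otimes_A A)=\dim \Delta^{\rm op}(\mu)\neq 0$ together with the decomposition $A\cong\bigoplus_{\lambda\in\Lambda'}P(\lambda)^{a_\lambda}$ and the multiplicity formula of Proposition \ref{standardproperties}(2), which is exactly your parenthetical ``alternative'' route. The only difference is cosmetic: you also spell out the (correct, standard) fact that an indecomposable module has all composition factors in one block, which the paper leaves implicit.
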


\begin{proof}
Let $\Delta(\mu)$ be a standard module for $A$. By
Proposition \ref{standardproperties}(2) it is enough to show that 
\begin{equation}\label{nonzero}
\dim(\Delta^{\rm op}(\mu)\otimes_A P(\lambda))\neq 0
\end{equation}
for some $\lambda\in\Lambda'$. But $A\cong
\bigoplus_{\lambda\in\Lambda'}P(\lambda)^{a_\lambda}$ for some
$a_{\lambda}>0$ and 
$$\dim(\Delta(\mu)^{\rm op}\otimes_AA)=\dim(\Delta(\mu)^{\rm op})\neq
  0$$
and so there must exist some $\lambda$ such that (\ref{nonzero}) holds.
\end{proof}

By Corollary \ref{stdinP} it makes sense to talk about the block of a
standard module 
and hence to extend the block relation from $\Lambda'$ to the
whole of $\Lambda$.

\section{Partition combinatorics and representations of the symmetric
  group}
\label{symstuff}

\subsection*{Partition combinatorics}

We briefly review the basic properties of partitions and Young digrams
that we will need; further details can be found in \cite{JK}. Given
$n\in \mathbb{N}$, a \emph{partition} $\lambda$ of $n$ is an element
$\lambda= (\lambda_1, \lambda_2, \dots, \lambda_k)$ such that
$\lambda_i \in \mN$ for all $1\leq i\leq k$, with $\lambda_1 \geq
\lambda_2\geq \dots \geq \lambda_k$ and $|\lambda|:=\lambda_1+
\lambda_2 + \dots \lambda_k=n$. If $\lambda$ is a partition of $n$ we
write $\lambda \vdash n$.

We define a partial order $\unlhd$ on the set of all partitions (of
any $n$) as follows. For two partitions $\lambda$ and $\mu$ we set
$\lambda \unlhd \mu$ if and only if either $|\lambda|>|\mu|$ or
$|\lambda| = |\mu|$ and $\sum_{i=1}^{j} \lambda_i\leq \sum_{i=1}^j
\mu_i$ for all $j\geq 1$. (If $|\lambda|=|\mu|$ this is just the usual
dominance order.)

We will often identify a partition $\lambda$ with its \emph{Young
diagram}. This is a collection of $n$ boxes ordered in left-justified
rows such that the $i$th row from the top contains $\lambda_i$ boxes. For
example the Young diagram corresponding to $\lambda= (4,4,2,1)$ is
illlustrated in Figure \ref{young}(a).  The
\emph{transpose} $\lambda^T$ of a partition $\lambda$ is the partition whose
corresponding Young diagram has as rows the columns of the Young
diagram of $\lambda$. For example, the transpose of $\lambda=
(4,4,2,1)$ is $\lambda^T= (4,3,2,2)$.

A \emph{hook} of a Young diagram corresponding to a given box consists of
that box and all boxes below it in the same column and all boxes to
the right of it in the same row. We
obtain the corresponding \emph{rim hook} by pushing all boxes down and to the
right until they get to the edge of the Young diagram.  For example,
the hook corresponding to the box $(1,2)$ of $\lambda=(4,4,2,1)$ is
shown in Figure \ref{young}(b), and the corresponding rim hook is
shown in Figure \ref{young}(c).

\begin{figure}[ht]
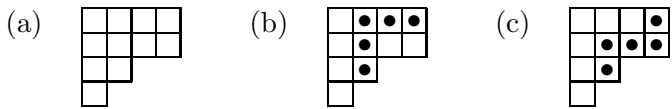

\[\text{ (a) }\quad \ytableausetup{mathmode, boxsize=0.8em} \ydiagram{4,4,2,1}
\qquad \text{ (b) }\quad
\ytableausetup{mathmode, boxsize=0.8em} \ydiagram[*(white) \bullet]
                {1+3,1+1,1+1,1+0} * {1,1,1,1}* {0+0,2+2} \qquad \text{
                  (c) }\quad \ytableausetup{mathmode, boxsize=0.8em}
                \ydiagram[*(white) \bullet] {3+1,1+3,1+1,1+0} *
                         {3,2,1,1}\] 
\caption{A Young diagram, the hook corresponding to $(1,2)$, and the
  associated rim hook.}
\label{young}
\end{figure}

We say that a partition $\lambda$ is an \emph{$r$-staircase} if 
$$\lambda=(r,r-1,\ldots,1).$$ For example, if we remove the box $(2,4)$
from the partition in Figure \ref{young}(a) then we are left with a
$4$-staircase. Note that the maximal rim hook for an $r$-staircase
has length $2r-1$.

Fix a prime number $p$. A \emph{(rim) $p$-hook} is a (rim) hook consisting of
$p$ boxes. The \emph{$p$-core} of a given partition $\lambda$ is the
partition we obtain by successively removing rim $p$-hooks until this
is no longer possible. The $p$-core is independent of the order in
which one removes these $p$-hooks.
Note that $2$-cores are given by $r$-staircases for $r\geq 0$.

 We say that $\lambda$ is
\emph{$p$-restricted} if $\lambda_{i}-\lambda_{i+1} <p$ for all $i$ and
\emph{$p$-regular} if there is no $i$ such that $\lambda_i = \lambda_{i+1} =
\dots = \lambda_{i+p-1}$ (that is if $\lambda^T$ is $p$-restricted).
We will also say that any partition is $0$-regular and $0$-restricted.

To each box $b$ in a Young diagram we associate a corresponding
\emph{content} by setting $\content(b)=j-i$ where $b$ is in row $i$ and
column $j$. When working over some field $k$ we will define the
\emph{residue} $\residue(b)$ associated to $b$ to be the image of 
$\content(b)$ under the standard map from $\ZZ$ to $k$.

For a partition $\lambda \vdash n$, a \emph{standard tableau of shape
$\lambda$} is a numbering of the boxes of the Young diagram of
$\lambda$ with the numbers $1,2, \ldots , n$ in such a way that the
numbers are increasing along the rows and down the columns of
$\lambda$. We denote the set of all standard tableaux of shape
$\lambda$ by $\mathcal{T}_\lambda$.

\subsection*{Representations of the symmetric group}
Let $k$ be a field of characteristic $p\geq 0$. Let ${\rm H}_n$ (or just ${\rm H}$ when this will not cause confusion)
be the group algebra $k \mathfrak{S}_n$. There are several different standard (and cellular) bases for ${\rm H}_n$ available in the literature. Here we will use the Murphy basis introduced in \cite{Mur} and follow the exposition given in \cite[Chapter 3]{Mathas}. For each partition $\lambda \vdash n$ and each pair of standard tableaux $(T_1,T_2)\in \mathcal{T}_\lambda \times \mathcal{T}_\lambda$  Murphy defined an element $m_{T_1,T_2}^\lambda\in {\rm H}_n$.  (In fact these are defined over $\ZZ\mathfrak{S}_n$.) These elements form a standard basis for ${\rm H}_n$.   We will not need the explicit construction  of the Murphy basis elements for this paper and so will only recall some of their properties. 
Following Section 3, for any $\lambda \vdash n$ we define

$${\rm H}^{\unrhd \lambda} = \langle m_{T_1, T_2}^\mu , \, (T_1, T_2)
\in \mathcal{T}_\mu \times \mathcal{T}_\mu, \, \mu\vdash n \, \mbox{with}\, \mu \unrhd \lambda
\rangle,  \quad \mbox{and}$$ 
$${\rm H}^{\rhd \lambda} = \langle m_{T_1, T_2}^\mu , \, (T_1, T_2)
\in \mathcal{T}_\mu \times \mathcal{T}_\mu, \, \mu\vdash n \, \mbox{with}\, \mu \rhd \lambda
\rangle.$$ 
Then we have 
$${\rm H}^\lambda = {\rm H}^{\unrhd \lambda} / {\rm H}^{\rhd \lambda}
\cong S^{\lambda} \otimes (S^{\lambda})^{\rm op}$$ as ${\rm
  H}$-bimodules and for any fixed $T_1\in \mathcal{T}_\lambda$ the set
$m_{T, T_1}^\lambda$ for all $T\in \mathcal{T}_\lambda$ form a basis
for the module $S^\lambda$. In fact, the modules $S^\lambda$ for $\lambda \vdash n$ are the familiar \emph{dual Specht modules} for ${\rm H}_n$.

There is a bilinear form defined on each dual Specht module $S^{\lambda}$
and if we consider the quotient of $S^{\lambda}$ by the radical of its
bilinear form then we have that
$$S^\lambda / {\rm rad}S^{\lambda} \neq 0 \quad \mbox{if and only if}
\quad \mbox{$\lambda$ is $p$-restricted}$$
where $p\geq 0$ is the characteristic of the field $k$. Moreover the set of all
$D^\lambda :=S^\lambda / {\rm rad}S^{\lambda}$ for $\lambda$ running
over the set of $p$-restricted partitions of $n$ form a complete set
of pairwise non-isomorphic \emph{simple} ${\rm H}_n$-modules.

The \emph{blocks} of ${\rm H_n}$ are described by Nakayama's conjecture (see \cite[Theorem 6.1.21]{JK}) which states that two partitions of $n$ are in the same block if and only if they have the same $p$-core. In particular, if a partition $\lambda$  is itself a $p$-core then it is alone in its block and we have $S^\lambda = D^\lambda$.

Define $\iota: {\rm H}_n\rightarrow {\rm H}_n$ to be the anti-automorphism defined by $\iota(x)=x^{-1}$ for all $x\in \mathfrak{S}_n$.
Define also $\alpha : {\rm H}_n\rightarrow {\rm H}_n$ to be the automorphism defined by $\alpha(x)=(-1)^{\ell(x)}x$ for all $x\in \mathfrak{S}_n$. Note that, when restricted to ${\rm H}_n$, the anti-automorphism $\phi$ given in Definition \ref{antiauto} can be factorised as $\phi = \alpha \iota$.

Using the anti-automorphism $\iota$, the linear dual $M^*$ of any left ${\rm H}_n$-module can be given the structure of left ${\rm H}_n$-module by setting $(x\gamma)(m) = \gamma(\iota (x) m)$ for all $\gamma\in M^*$, $m\in M$ and $x\in {\rm H}_n$. This duality fixes every simple module, so we have
$(D^\lambda)^* \cong D^\lambda$ for any $p$-restricted partition $\lambda$ (see \cite[Exercise 2.7]{Mathas}). 

We also have another functor on ${\rm H}_n\text{-mod}$ given by tensoring with the 1-dimensional sign representation ${\rm sgn}$ of ${\rm H}_n$. It is well known that $S^\lambda \otimes {\rm sgn} \cong (S^{\lambda^T})^*$ for any partition $\lambda\vdash n$ (see for example \cite[Exercise 3.14(iii)]{Mathas}). 
For each $p$-restricted partition $\lambda$, the module
$D^{\lambda}\otimes {\rm sgn}$ is also simple and so it must be
isomorphic to some $D^{\mu}$ for some $p$-restricted partition $\mu$. We call the
partition $\mu$ the \emph{Mullineux conjugate} of the partition
$\lambda$, and denoted by $\mu = \lambda^{M}$. The map $-^M$ gives an involution on the set of $p$-restricted partitions of $n$. Note that in the literature, the Mullineux map is often defined as an involution $m$ on the set of $p$-regular partitions. Our Mullineux conjugate $\lambda^M$ is related to the involution $m$ simply by $\lambda^M = (m(\lambda^T))^T$.
There are several combinatorial descriptions of this involution, the first one given by Mullineux in \cite{Mullineux}, but we will not need any explicit description for this paper. We only note that if $S^\lambda = D^\lambda$ then we have $D^\lambda \otimes {\rm sgn} = S^\lambda \otimes {\rm sgn} \cong (S^{\lambda^T})^* \cong D^{\lambda^T}$ and hence $\lambda^M = \lambda^T$ in this case. This happens for instance when $\lambda$ is a $p$-core.

We finish this section with a couple of properties of the Murphy basis which we will need later in the paper.

\begin{proposition}\label{cellular Sn}
The Murphy basis has the following properties. 
\begin{enumerate}
\item $\iota (m_{T_1, T_2}^\lambda) = m_{T_2, T_1}^\lambda$ for all $T_1,T_2\in \mathcal{T}_\lambda, \, \lambda\vdash n$.
\item For each $\lambda \vdash n$ and $T_1\in \mathcal{T}_\lambda$, the set of all $\alpha(m_{T, T_1}^\lambda)$ with $T\in \mathcal{T}_\lambda$ spans an $H_n$-module isomorphic to $(S^{\lambda^T})^*$.
\end{enumerate}
\end{proposition}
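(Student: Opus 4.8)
The plan is to treat the two statements separately, proving (1) by a direct computation from the definition of the Murphy basis and (2) by recognising the automorphism $\alpha$ as the operation of tensoring with the sign representation. For (1), I would use the standard presentation of a Murphy basis element, namely $m_{T_1,T_2}^\lambda = \iota(d(T_1))\, m_\lambda\, d(T_2)$, where $m_\lambda = \sum_{w\in\mathfrak{S}_\lambda} w$ is the sum over the relevant Young subgroup and $d(T)$ is the distinguished coset representative carrying the initial tableau to $T$. The only facts needed are that $\iota$ is an anti-automorphism with $\iota^2 = \mathrm{id}$ and that $\iota(m_\lambda) = m_\lambda$, the latter because inversion permutes the group elements appearing in $m_\lambda$. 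Applying $\iota$ then reverses the product and interchanges the two tableaux, giving $\iota(m_{T_1,T_2}^\lambda) = \iota(d(T_2))\, m_\lambda\, d(T_1) = m_{T_2,T_1}^\lambda$. This is precisely the assertion that $\iota$ serves as the cellular anti-involution for the Murphy basis, and one may alternatively simply cite \cite[Chapter 3]{Mathas}.

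For (2), the central observation is that $\alpha$ is an algebra automorphism with $\alpha^2 = \mathrm{id}$ (since $\alpha(w) = {\rm sgn}(w)\, w$), so twisting a module along $\alpha$ is the same as tensoring with ${\rm sgn}$. I would transport the cell structure along $\alpha$: being an automorphism, $\alpha$ carries the chain of two-sided ideals ${\rm H}^{\unrhd\lambda}\supseteq {\rm H}^{\rhd\lambda}$ to two-sided ideals $\alpha({\rm H}^{\unrhd\lambda})\supseteq \alpha({\rm H}^{\rhd\lambda})$ and induces a linear isomorphism $\bar\alpha\colon {\rm H}^\lambda \to \alpha({\rm H}^{\unrhd\lambda})/\alpha({\rm H}^{\rhd\lambda})$. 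Let $W$ be the image under $\bar\alpha$ of the submodule $S^\lambda\subseteq {\rm H}^\lambda$ spanned by $\{m_{T,T_1}^\lambda : T\in\mathcal{T}_\lambda\}$; then $W$ is spanned by the classes of the $\alpha(m_{T,T_1}^\lambda)$ and is a left ${\rm H}_n$-submodule of the transported quotient.

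The key computation is that, for $h\in {\rm H}_n$ and $v\in S^\lambda$, the identity $\alpha(ab)=\alpha(a)\alpha(b)$ together with $\alpha^2=\mathrm{id}$ gives $h\cdot\bar\alpha(v) = \bar\alpha(\alpha(h)\cdot v)$. Thus $\bar\alpha$ intertwines the action of $h$ on $W$ with the action of $\alpha(h)$ on $S^\lambda$; since $\alpha(w)={\rm sgn}(w)\,w$, this is exactly the action on $S^\lambda\otimes{\rm sgn}$. Hence $W\cong S^\lambda\otimes{\rm sgn}$, and combining this with the already recorded isomorphism $S^\lambda\otimes{\rm sgn}\cong (S^{\lambda^T})^*$ finishes the proof.

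The main obstacle is the temptation to regard $\{\alpha(m_{T,T_1}^\lambda)\}$ as spanning a submodule of ${\rm H}^{\unrhd\lambda}/{\rm H}^{\rhd\lambda}$ itself. This fails in general: the sign twist interacts with the dominance order through transposition, which reverses it, so $\alpha$ does not preserve the cell ideals ${\rm H}^{\rhd\lambda}$. The remedy, and the point requiring care, is to work in the transported quotient $\alpha({\rm H}^{\unrhd\lambda})/\alpha({\rm H}^{\rhd\lambda})$, where one must check that the classes of the $\alpha(m_{T,T_1}^\lambda)$ remain linearly independent and that the ${\rm H}_n$-action closes up. Both follow formally from $\alpha$ being an automorphism, after which the identification of $W$ with $S^\lambda\otimes{\rm sgn}$ is pure transport of structure.
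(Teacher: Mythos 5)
Your proposal is correct and follows essentially the same route as the paper: part (1) is the standard computation behind the citation to Mathas, and part (2) rests on exactly the identity $w\,\alpha(m_{T,T_1}^\lambda) = (-1)^{\ell(w)}\alpha(w\,m_{T,T_1}^\lambda)$ and the identification of the resulting span with $S^\lambda\otimes{\rm sgn}\cong (S^{\lambda^T})^*$. Your additional care about working in the transported subquotient $\alpha({\rm H}^{\unrhd\lambda})/\alpha({\rm H}^{\rhd\lambda})$ rather than ${\rm H}^{\unrhd\lambda}/{\rm H}^{\rhd\lambda}$ is a legitimate point the paper leaves implicit, but it is a refinement of the same argument rather than a different one.
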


\begin{proof}
For part (1), see \cite[Chapter 3, 3.20(1)]{Mathas}. For part (2),
simply note that for any $w\in \mathfrak{S}_n$ we have
$$\alpha(wm_{T,T_1}^\lambda) = \alpha(w)
\alpha(m_{T,T_1}^\lambda) = 
(-1)^{\ell(w)} w \alpha(m_{T,T_1}^\lambda).$$ 
So we have
$$w\alpha(m_{T,T_1}^\lambda) = (-1)^{\ell(w)} \alpha
(wm_{T,T_1}^\lambda).$$ 
Therefore we have
$$\langle \alpha(m_{T,T_1}^\lambda) \, | \, T\in
\mathcal{T}_\lambda\rangle \cong S^{\lambda}\otimes {\rm sgn} \cong
(S^{\lambda^{T}})^*.$$ 
\end{proof}

Note that the set of all $\alpha(m_{T_1,T_2}^\lambda)$ for $\lambda \vdash n, T_1, T_2\in \mathcal{T}_\lambda$ form another standard basis for ${\rm H}_n$.

\section{Standard basis and standard modules for the periplectic Brauer algebra} \label{Section standard modules}

In this section we will continue to consider an arbitrary field $k$ of characteristic $p \geq 0$.
Let $0\leq t\leq n$ with $n-t\in 2 \mathbb{Z}$ and define $I(n,t)$ to be
the set of $(n,t)$-Brauer diagrams with precisely $t$ non-crossing
propagating lines. Following \cite[Section 4]{GL}, each $(n,n)$-Brauer
diagram with $t$ propagating lines can be uniquely written as
$S_1wS_2^{\rm op}$ with $S_1, S_2\in I(n,t)$, and $w\in
\mathfrak{S}_t$, where $S_2^{\rm op}$ denotes the $(t,n)$-Brauer
diagram obtained by flipping $S_2$ horizontally.

\begin{theorem}\label{stdbasis} Set 
$$\Lambda_n = \{\lambda \vdash t, \, 0\leq t\leq n \, \mbox{with}\ \,
  n-t\in 2\mathbb{Z}\}$$  with partial order $\unlhd$.
 Define 
$$\mathcal{B}_n = \{ C_{(S_1,T_1)(S_2,T_2)}^\lambda := S_1
 m_{T_1, T_2}^\lambda S_2^{\rm op}\, | \, \lambda\in \Lambda_n ,\,
 T_1, T_2\in \mathcal{T}_\lambda, \, S_1,S_2\in I(n,t),  \, t=|\lambda|\}.$$  Then
 $(A_n, \mathcal{B}_n)$ is a standardly based algebra.
\end{theorem}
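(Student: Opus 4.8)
The plan is to verify directly that the given basis $\mathcal{B}_n$ satisfies the two defining conditions of a standardly based algebra from Section~\ref{sba}: that it splits as a disjoint union indexed by $\Lambda_n$ (ordered by $\unlhd$) into subsets that behave like two-sided ideals under multiplication, and that within each $\mathcal{B}_n^\lambda$ the left and right multiplication of an arbitrary element $a \in A_n$ acts only on the appropriate tableau index, modulo higher terms. The indexing sets will be $I(\lambda) = I(n,t) \times \mathcal{T}_\lambda$ and $J(\lambda) = I(n,t) \times \mathcal{T}_\lambda$ (with $t = |\lambda|$), and the basis element $C^\lambda_{(S_1,T_1)(S_2,T_2)}$ plays the role of $a^\lambda_{ij}$ with $i = (S_1,T_1)$ and $j = (S_2,T_2)$.

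\medskip
\noindent\textbf{Key steps.} First I would establish that $\mathcal{B}_n$ is genuinely a basis: by the Graham--Lehrer decomposition $S_1 w S_2^{\rm op}$ recalled just before the statement, every $(n,n)$-Brauer diagram with exactly $t$ propagating lines factors uniquely through $I(n,t) \times \mathfrak{S}_t \times I(n,t)$, and since the Murphy elements $m^\lambda_{T_1,T_2}$ (ranging over $\lambda \vdash t$ and standard tableaux) form a basis of $\mathrm{H}_t = k\mathfrak{S}_t$, a counting/change-of-basis argument shows the $C^\lambda_{(S_1,T_1)(S_2,T_2)}$ biject with a basis of $A_n$. Second, I would define the filtration by $A_n^{\unrhd\lambda}$ (spanned by $\mathcal{B}_n^\mu$ with $\mu \unrhd \lambda$) and check the ideal property. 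Here the key point is to control what happens when a diagram $d$ is multiplied onto $S_1 m^\lambda_{T_1,T_2} S_2^{\rm op}$: composing diagrams can only \emph{preserve or decrease} the number of propagating lines (caps and cups may cause loops or merge through-strands), and in the partial order $\unlhd$ a strictly smaller propagating number means strictly larger (since $|\lambda| > |\mu|$ already forces $\lambda \unlhd \mu$ by definition of $\unlhd$). So the "dangerous'' terms that lower the propagating count automatically land in $A_n^{\rhd\lambda}$. When the propagating number is preserved, the interaction reduces to multiplication \emph{inside} $\mathrm{H}_t$, where I can invoke the known cellular/standard structure of the Murphy basis (Proposition~\ref{cellular Sn} and the ${\rm H}^{\unrhd\lambda}$-filtration): left multiplication by $a \in A_n$, after resolving the diagrammatic part, acts on the $T_1$-index via the structure constants $f_{i',\lambda}(a,i)$ coming from the Murphy-basis action and the combinatorics of how $S_1$ is modified, while leaving $(S_2,T_2)$ untouched mod $A_n^{\rhd\lambda}$; the right-multiplication statement is symmetric.

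\medskip
\noindent\textbf{Main obstacle.} The delicate part is the bookkeeping of \emph{signs} arising from $\gamma(d_1,d_2)$ in the composition rule \eqref{Composition Brauer diagrams}, together with ensuring that when a product of diagrams preserves the propagating number, the ``straightening'' of $d \cdot S_1$ back into the form $\sum S_1' \cdot (\text{element of } \mathrm{H}_t)$ is genuinely independent of the right-hand data $(S_2,T_2)$ modulo $A_n^{\rhd\lambda}$. In the classical (cellular) Brauer algebra this separation is automatic, but here the sign-twisted multiplication means the scalars $f_{i',\lambda}(a,i)$ must be shown to depend only on $a$, $i$, and $i'$, not on $j$. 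I expect this to follow from the fact — established in \cite{Coulembier}, which already proves $A_n$ is standardly based — that the sign $(-1)^{\gamma}$ factorises appropriately, so that the anti-automorphism $\phi = \alpha\iota$ intertwines the left and right module structures. Concretely, I would use $\phi$ (Definition~\ref{antiauto}) to deduce the right-multiplication formula from the left one, checking that $\phi$ sends $C^\lambda_{(S_1,T_1)(S_2,T_2)}$ to a scalar multiple of $C^\lambda_{(S_2,T_2)(S_1,T_1)}$ via the factorisation $\phi|_{\mathrm{H}} = \alpha\iota$ and Proposition~\ref{cellular Sn}(1); this reduces the two conditions to a single computation and simultaneously pins down the sign conventions.
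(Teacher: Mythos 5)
Your proposal is correct and its mathematical content is the same as the paper's, which disposes of the theorem in two sentences: the verification is exactly the proof of (C1) and (C2) in Graham--Lehrer's Theorem 4.10 for the Brauer algebra, because the periplectic multiplication differs from the Brauer multiplication at $\delta=0$ only by signs on individual diagrams, and these signs do not affect which basis elements can occur in a product. Everything in your first two paragraphs --- the $S_1wS_2^{\rm op}$ factorisation, the change of basis from group elements to the Murphy basis of ${\rm H}_t$, the observation that composition can only lower the number of propagating lines (and that lowering it lands in $A_n^{\rhd\lambda}$ because fewer boxes means strictly larger in $\unlhd$), and the reduction to the cellular property of the Murphy basis when the propagating number is preserved --- is precisely the content of the cited argument, so you have simply written out what the paper delegates to \cite{GL}. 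Two cautions about your ``main obstacle'' paragraph. First, you cannot appeal to \cite{Coulembier}'s theorem that $A_n$ is standardly based to justify the sign bookkeeping for \emph{this} basis: that is the statement being proved (Coulembier's result uses a different basis, and the whole point of Theorem \ref{stdbasis} is to exhibit one built from the Murphy basis). Second, $\phi$ does not send $C^\lambda_{(S_1,T_1)(S_2,T_2)}$ to a scalar multiple of $C^\lambda_{(S_2,T_2)(S_1,T_1)}$; by Proposition \ref{phicostandard} it lands (up to sign) in the \emph{different} standard basis $\widetilde{C}^\lambda$, so the intertwining argument is not as clean as you suggest. The issue you flag --- that $f_{i',\lambda}(a,i)$ must be independent of $j=(S_2,T_2)$ --- is resolved much more simply by associativity of the (signed) diagrammatic composition: in $a\cdot\bigl(S_1 m^\lambda_{T_1,T_2}S_2^{\rm op}\bigr)=\bigl(a\,S_1\, m^\lambda_{T_1,T_2}\bigr)S_2^{\rm op}$ the expansion $a\,S_1=\pm S_1'w$ (or a drop in propagating number) and the subsequent Murphy-basis expansion of $w\,m^\lambda_{T_1,T_2}$ modulo ${\rm H}^{\rhd\lambda}$ are computed before $S_2^{\rm op}$ is appended, so all coefficients, signs included, are determined by $a$ and $(S_1,T_1)$ alone; the right-hand condition follows by the symmetric computation on $S_2^{\rm op}a$.
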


\begin{proof}
The proof follows exactly the proof of (C1) and (C2) in \cite[Theorem
  4.10]{GL}. Note that, up to a sign, the multiplication of diagrams
in $A_n$ is the same as  in the Brauer algebra (with parameter $0$)
and the sign does not affect the arguments as we consider linear spans
of diagrams.
\end{proof}

Following Section \ref{sba} we have a filtration of the periplectic
Brauer algebra $A_n$ with sections
$$A_n^\lambda \cong W_n(\lambda) \otimes W^{\rm op}_n(\lambda)$$ for
each $\lambda \in \Lambda_n$. The left (respectively right) standard module
$W_n(\lambda)$ (respectively $W^{\rm op}_n(\lambda)$) is spanned by the
elements $C_{(S,T)(S_1,T_1)}^\lambda$
(respectively $C_{(S_1,T_1)(S,T)}^\lambda$) for all $S\in I(n,t)$, $T\in
\mathcal{T}_\lambda$ and some fixed $S_1\in I(n,t)$ and $T_1\in
\mathcal{T}_\lambda$.

Note that if we used the basis $\alpha(m_{T_1,T_2}^\lambda)$ instead of $m_{T_1,T_2}^\lambda$ to construct the standard basis for $A_n$, we would get a different set of standard modules, which we denote by $\widetilde{W}_n(\lambda)$, spanned by $\widetilde{C}_{(T,S)(T_1S_1)}^\lambda = S \alpha(m_{T ,T_1}^\lambda) S_1^{op}$.

\begin{remark}\label{standardspecht}
When $|\lambda|=n$ we see that $C_{(S,T)(S_1,T_1)}^\lambda = m_{TT_1}^\lambda$ as $I(n,n)$ only contains the identity element. So in this case we have $W_n(\lambda) = S^\lambda$ inflated to $A_n$ using the surjection $A_n \rightarrow {\rm H}_n$ which maps any diagram with at least one cup (or cap) to $0$.
\end{remark}

More generally, it is clear from the definition of $W_n(\lambda)$ given above that we can write it as
$$W_n(\lambda) = V(n,t)\otimes_{\mathfrak{S}_t} S^\lambda$$
where $V(n,t)$ is the span of all $(n,t)$-Brauer diagrams with exactly $t$ propagating lines and the action of $A_n$ on $V(n,t)\otimes_{\mathfrak{S}_t} S^\lambda$ is given as follows. Let $d$ be an $(n,n)$-Brauer diagram, $S$ be a $(n,t)$-Brauer diagram with $t$ propagating lines and $x\in S^\lambda$. Consider the multiplication $dS$ as defined in (\ref{Composition Brauer diagrams}). If $d \star S$ has fewer than $t$ propagating lines then we set $d(S\otimes x)=0$. Otherwise we set $d(S\otimes x)=dS \otimes x$.

Similarly, using Proposition \ref{cellular Sn} (2), we have that  
$$\widetilde{W}_n(\lambda) = V(n,t)\otimes_{\mathfrak{S}_t} (S^{\lambda^T})^*.$$

In fact, we have that $V(n,t)\otimes_{\mathfrak{S}_t} -$ gives a functor from ${\rm H}_t\text{-mod}$ to $A_n\text{-mod}$. It is easy to see that $V(n,t)$ is projective as a right $\mathfrak{S}_t$-module and so this functor is exact.  Now as $(D^\lambda)^* \cong D^\lambda$ for all simple ${\rm H}_t$-modules we have that $(S^{\lambda^T})^*$ and $S^{\lambda^T}$ have the same composition factors. The next proposition then follows immediately from the exactness of the functor $V(n,t)\otimes_{\mathfrak{S}_t} -$.

\begin{proposition}\label{tildeW}
For any $\lambda \in \Lambda_n$ the $A_n$-modules $\widetilde{W}_n(\lambda)$ and $W_n(\lambda^T)$ have the same composition factors.
\end{proposition}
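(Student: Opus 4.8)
The plan is to realise both modules as the image under a single exact functor of two ${\rm H}_t$-modules that share the same composition factors, and then to argue that such a functor transports this property to $A_n\text{-mod}$. First I would recall the identifications established just above the statement: since $\lambda\in\Lambda_n$ with $\lambda\vdash t$, we have $\lambda^T\vdash t$, and $W_n(\lambda^T) = V(n,t)\otimes_{\mathfrak{S}_t} S^{\lambda^T}$ while $\widetilde{W}_n(\lambda) = V(n,t)\otimes_{\mathfrak{S}_t}(S^{\lambda^T})^*$. Thus both modules arise by applying the functor $F := V(n,t)\otimes_{\mathfrak{S}_t}-$ to the pair $S^{\lambda^T}$ and its dual $(S^{\lambda^T})^*$.

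Second, I would establish that $S^{\lambda^T}$ and $(S^{\lambda^T})^*$ have the same composition factors as ${\rm H}_t$-modules. The assignment $M\mapsto M^*$ defined via the anti-automorphism $\iota$ is a contravariant exact duality on ${\rm H}_t\text{-mod}$, and we already know $(D^\mu)^*\cong D^\mu$ for every $p$-restricted partition $\mu$. Applying $*$ to a composition series of $S^{\lambda^T}$ therefore yields a filtration of $(S^{\lambda^T})^*$ whose sections are the duals of the original simple sections, hence the very same simple modules; so the two ${\rm H}_t$-modules have equal composition multiplicities.

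Third, I would transport this across $F$. As noted before the statement, $V(n,t)$ is projective, hence flat, as a right $\mathfrak{S}_t$-module, so $F$ is exact. Choosing composition series of $S^{\lambda^T}$ and of $(S^{\lambda^T})^*$ with the same multiset of simple sections $\{D^{\mu_1},\dots,D^{\mu_r}\}$, exactness of $F$ produces filtrations of $W_n(\lambda^T)$ and of $\widetilde{W}_n(\lambda)$ whose sections are $F(D^{\mu_1}),\dots,F(D^{\mu_r})$ in both cases. The composition factors of each $A_n$-module are then the union, counted with multiplicity, of the composition factors of the modules $F(D^{\mu_i})$, and this union is identical for the two modules.

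The single point requiring care—and the only genuine obstacle—is that $F$ need not send a simple ${\rm H}_t$-module to a simple $A_n$-module, so one cannot match composition factors one at a time. The remedy, carried out above, is to work entirely with the multisets of sections obtained by applying $F$ to composition series, which is legitimate precisely because $F$ is exact. Everything else reduces to a direct appeal to the exactness of $F$ together with the stated self-duality $(D^\mu)^*\cong D^\mu$; accordingly I expect the argument to be short.
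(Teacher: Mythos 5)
Your proposal is correct and follows essentially the same route as the paper: both identify $W_n(\lambda^T)$ and $\widetilde{W}_n(\lambda)$ as the images of $S^{\lambda^T}$ and $(S^{\lambda^T})^*$ under the exact functor $V(n,t)\otimes_{\mathfrak{S}_t}-$, note that these two ${\rm H}_t$-modules share composition factors because duality fixes the simples $D^\mu$, and conclude by exactness. Your extra remark about working with multisets of sections (since the functor need not preserve simplicity) is just a careful spelling-out of what the paper leaves as ``follows immediately from exactness.''
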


Recall the definition of the anti-automorphism $\phi$ given in Definition \ref{antiauto}.

\begin{proposition}\label{phicostandard}
Let $\lambda \in \Lambda_n$. We have $$\phi(W^{\rm op}_n(\lambda))
\cong \widetilde{W}_n(\lambda).$$ 
\end{proposition}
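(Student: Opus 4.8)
The plan is to use the anti-automorphism $\phi$ itself as the desired map, by computing its effect on the standard basis $\mathcal{B}_n$ and then checking that it intertwines the module structures. Since $\phi$ is contravariant it converts the right module $W^{\rm op}_n(\lambda)$ into a left module, and I claim that the resulting left module is literally $\widetilde{W}_n(\lambda)$, realised inside the $\alpha$-twisted section of $A_n$.

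First I would compute $\phi$ on a basis element of $W^{\rm op}_n(\lambda)$. Writing $C_{(S_1,T_1)(S,T)}^\lambda = S_1\, m_{T_1,T}^\lambda\, S^{\rm op}$ and using that $\phi$ reverses products gives
\[
\phi\bigl(C_{(S_1,T_1)(S,T)}^\lambda\bigr) = \phi(S^{\rm op})\,\phi(m_{T_1,T}^\lambda)\,\phi(S_1).
\]
For the middle factor, the identity $\phi|_{{\rm H}_n} = \alpha\iota$ together with Proposition \ref{cellular Sn}(1) yields $\phi(m_{T_1,T}^\lambda) = \alpha(\iota(m_{T_1,T}^\lambda)) = \alpha(m_{T,T_1}^\lambda)$. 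For the outer factors I would observe that $\phi$ agrees, up to a sign, with the horizontal flip $d\mapsto d^{\rm op}$: both are contravariant monoidal functors fixing objects and agreeing on the generators $I,X,\cap,\cup$ up to the signs recorded in Definition \ref{antiauto}. Hence $\phi(d) = \pm\, d^{\rm op}$ for every diagram $d$, so in particular $\phi(S^{\rm op}) = \pm S$ and $\phi(S_1) = \pm S_1^{\rm op}$. Substituting, and recalling $\widetilde{C}_{(T,S)(T_1,S_1)}^\lambda = S\,\alpha(m_{T,T_1}^\lambda)\,S_1^{\rm op}$, I obtain the exact identity in $A_n$
\[
\phi\bigl(C_{(S_1,T_1)(S,T)}^\lambda\bigr) = \pm\, \widetilde{C}_{(T,S)(T_1,S_1)}^\lambda.
\]

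This computation does the bulk of the work. Because the index $\lambda$ and the number of propagating lines are preserved, $\phi$ sends $\mathcal{B}_n$ bijectively onto $\pm\widetilde{\mathcal{B}}_n$ and carries the ideal filtration defined by $\mathcal{B}_n$ onto the filtration defined by $\widetilde{\mathcal{B}}_n$ (the two indexing sets $\{\mu \unrhd \lambda\}$ coincide); consequently $\phi$ descends to a bimodule anti-isomorphism of the corresponding sections and restricts to a linear bijection from $W^{\rm op}_n(\lambda)$ onto $\widetilde{W}_n(\lambda)$, matching the slice with left index $(S_1,T_1)$ fixed to the slice with right index $(T_1,S_1)$ fixed. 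To upgrade this bijection to an isomorphism of left $A_n$-modules for the structure induced by the contravariant $\phi$, I would use that $\phi^2 = \id$ on $A_n$, checked on the algebra generators $s_i$ and $e_i$: from $\phi(s_i) = -s_i$ and $\phi(e_i) = -e_i$ one gets $\phi^2(s_i) = s_i$ and $\phi^2(e_i) = e_i$. Granting $\phi^2 = \id$, for $a\in A_n$ and $m\in W^{\rm op}_n(\lambda)$ one has $\phi(m\cdot\phi(a)) = \phi^2(a)\cdot\phi(m) = a\cdot\phi(m)$, which is precisely the assertion that $\phi$ intertwines the right action on $W^{\rm op}_n(\lambda)$ with the left action on $\widetilde{W}_n(\lambda)$.

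The main obstacle is the sign bookkeeping: making precise that $\phi$ equals the horizontal flip up to a diagram-dependent sign, and verifying $\phi^2 = \id$. Once these are in place the signs are harmless, since the diagram-dependent sign in the key identity only rescales individual basis vectors and hence does not affect the relevant spans, while $\phi^2 = \id$ delivers the homomorphism property with no further sign analysis.
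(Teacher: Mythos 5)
Your proposal is correct and follows essentially the same route as the paper: the heart of both arguments is the computation $\phi\bigl(C_{(T_1,S_1)(T,S)}^\lambda\bigr)=\pm\,\widetilde{C}_{(T,S)(T_1,S_1)}^\lambda$ via anti-multiplicativity of $\phi$, the identity $\phi|_{{\rm H}_n}=\alpha\iota$, and Proposition \ref{cellular Sn}(1), from which the two spans coincide. The additional care you take with the filtration, the sign bookkeeping, and the $\phi^2=\id$ intertwining check only makes explicit what the paper leaves implicit.
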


\begin{proof}
We have that $\phi(W^{\rm op}_n(\lambda))$ is spanned by the set of
all $\phi(C_{(T_1,S_1)(T,S)}^\lambda)$ for $(T,S)\in
\mathcal{T}_\lambda \times I(n,t)$. Now we have
\begin{eqnarray*}
\phi(C_{(T_1,S_1), (T,S)}^\lambda) &=& \phi(S_1 m_{T_1,T}^\lambda S^{\rm op})\\
&=& \pm S \phi(m_{T_1,T}^\lambda) (S_1)^{\rm op}\\
&=& \pm S \alpha(m_{T, T_1}^\lambda ) (S_1)^{\rm op}\\
&=& \pm \widetilde{C}_{(T,S), (T_1,S_1)}^\lambda).  
\end{eqnarray*} 
Here
we used Proposition \ref{cellular Sn} (1) and the fact that, when restricted to the symmetric group,
$\phi = \alpha \iota$.
These elements span the left $A_n$-module $\widetilde{W}_n(\lambda)$ by definition.
\end{proof}

Using the anti-automorphism $\phi$, we can define a contravariant exact functor \[
\Upsilon\colon A_n\text{-mod}\to A_n\text{-mod}; \quad M \mapsto
M^\ast = \Hom_\mk (M, \mk), \]µ where the $A_n$ action on $M^\ast$ is
given by
\[
a \gamma (m):= \gamma (\phi(a)m), \text{ for all } a\in A_n, \gamma
\in M^\ast, m \in M. 
\]
The functor $\Upsilon$ gives a contravariant equivalence of categories. This
implies the following result. 

\begin{proposition}
Let $\lambda\in \Lambda_n$. All composition factors of
$\Upsilon(W_n(\lambda))$ belong to the same block.
\end{proposition}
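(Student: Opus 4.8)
The plan is to use the contravariant functor $\Upsilon$ together with the structural result just established in Proposition~\ref{phicostandard}, and reduce the statement to facts about standard modules that we have already collected. First I would observe that $\Upsilon$ is a contravariant equivalence of categories, so it sends simple modules to simple modules, preserves the property of being indecomposable, and reverses the roles of top and socle. Consequently $\Upsilon$ permutes the simple $A_n$-modules, and crucially it \emph{preserves} the block relation: if $L(\mu)$ and $L(\nu)$ appear as composition factors of an indecomposable module $M$, then applying $\Upsilon$ shows that $\Upsilon(L(\mu))$ and $\Upsilon(L(\nu))$ appear as composition factors of the indecomposable module $\Upsilon(M)$. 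Hence the statement ``all composition factors lie in a single block'' is invariant under $\Upsilon$.

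Next I would unwind what $\Upsilon$ does to the relevant modules. Applying $\Upsilon$ to $W_n(\lambda)$ and using that $\Upsilon$ is defined by $\phi$-twisted linear duality, the composition factors of $\Upsilon(W_n(\lambda))$ are, up to the block-preserving bijection induced by $\Upsilon$ on simples, exactly the composition factors of $W_n(\lambda)$ read in the opposite order; equivalently, by the general relationship between $\phi$ and the right standard modules, $\Upsilon(W_n(\lambda))$ is closely tied to $\phi(W^{\rm op}_n(\lambda))$, which by Proposition~\ref{phicostandard} is isomorphic to $\widetilde{W}_n(\lambda)$. By Proposition~\ref{tildeW}, $\widetilde{W}_n(\lambda)$ has the same composition factors as $W_n(\lambda^T)$. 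Therefore it suffices to know that all composition factors of a \emph{standard} module lie in a single block.

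That last fact is precisely Corollary~\ref{stdinP}: every standard module occurs in the filtration of some projective indecomposable module, and hence has all its composition factors belonging to a single block. Since $W_n(\lambda^T)$ is a standard module (for $\lambda^T \in \Lambda_n$, noting that transposition preserves $|\lambda|$ and hence membership in $\Lambda_n$), its composition factors lie in one block. Transporting this back through the chain $W_n(\lambda^T) \leftrightarrow \widetilde{W}_n(\lambda) \cong \phi(W^{\rm op}_n(\lambda))$ and the block-preserving action of $\Upsilon$, we conclude that all composition factors of $\Upsilon(W_n(\lambda))$ lie in a single block.

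The main obstacle, and the step requiring the most care, is the bookkeeping that identifies the composition factors of $\Upsilon(W_n(\lambda))$ with those of $\widetilde{W}_n(\lambda)$ (or $W_n(\lambda^T)$). One must be precise about whether $\Upsilon$ applied to the \emph{left} standard module $W_n(\lambda)$ yields a module built from $\phi$ applied to the \emph{right} standard module $W^{\rm op}_n(\lambda)$, and verify that the $\phi$-twist in the $A_n$-action on the dual matches the twist used in Proposition~\ref{phicostandard}. Once the modules are correctly matched up, the block statement follows formally; the genuine content is entirely supplied by Corollary~\ref{stdinP} and the exactness/equivalence properties already recorded, so no new representation-theoretic input is needed beyond this identification.
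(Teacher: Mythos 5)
Your first paragraph already contains the paper's entire proof: $\Upsilon$ is a contravariant equivalence, so it carries the composition factors of $W_n(\lambda)$ (which lie in a single block by Corollary~\ref{stdinP}) bijectively to the composition factors of $\Upsilon(W_n(\lambda))$ while preserving the block relation, and you are done. The subsequent detour through $\phi(W^{\rm op}_n(\lambda))$, $\widetilde{W}_n(\lambda)$ and $W_n(\lambda^T)$ is unnecessary (and the identification of $\Upsilon(W_n(\lambda))$ with $\phi(W^{\rm op}_n(\lambda))$ is asserted rather than proved), but since Corollary~\ref{stdinP} applies directly to $W_n(\lambda)$ itself, your argument is correct and is essentially the paper's.
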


\begin{proof}
As $\Upsilon$ is a contravariant equivalence of categories, this
follows directly from the fact that all composition factors of
$W_n(\lambda)$ belong to the same block, as discussed at the end of
Section \ref{sba}.
\end{proof}

\medskip

Now define
$$\Lambda_n' = \{\lambda \in \Lambda_n \, : \, \lambda \, \mbox{is
  $p$-restricted and $\lambda \neq \emptyset$ when $n$ is even}\}.$$  It was shown in \cite{Coulembier} that the set
of $L_n(\lambda):=W_n(\lambda) / {\rm rad} W_n(\lambda)$ for all
$\lambda\in \Lambda_n'$ 
form a complete set of pairwise non-isomorphic 
simple $A_n$-modules. For
each $\lambda \in \Lambda_n'$ we denote by 
$P_n(\lambda)$ the projective cover of $L_n(\lambda)$.

\medskip

We will later wish to consider reduction mod $p$ from characteristic
$0$ to characteristic $p$. Notice that the basis in Theorem
\ref{stdbasis} is defined over $R$, and we will later use this to
relate the standard modules over $\mK$ to those over $\mk$. When we
wish to emphasise the choice of field we will use a superscript; for
example $W^{\mK}_n(\lambda)$ will denote the standard module defined
over $\mK$.

\section{Localisation and globalisation functors}\label{Section embeddings}

In this section we work over an arbitrary field $k$. Recall that the
symmetric group algebra ${\rm H}_n$ is the 
quotient of $A_n$ with basis  consisting of Brauer diagrams
without cups or caps. We can extend every ${\rm H}_n$-module
$M$ to an $A_n$-module by letting the Brauer diagrams which contains
cups and caps act trivially.  This gives an embedding of the category
${\rm H}_n \text{-mod}$ into the category $A_n\text{-mod}$.

We can also embed $A_n\text{-mod}$ in $A_{n+2}\text{-mod}$ for $n\geq 1$. For this
we need the idempotent $\epsilon_{n+2}$ which has $n-1$ non-crossing
propagating lines joining the left-most nodes, a propagating line
joining the third northern node from the right with the right-most
southern node, one cup joining the two right-most northern nodes and
one cap joining the penultimate southern node with its left neighbour,
as illustrated in Figure \ref{idemis}.

\begin{figure}[ht]
\[
\begin{tikzpicture}[scale=1,thick,>=angle 90]
\begin{scope}
\draw (-2,0.5) node[]{$\epsilon_{n+2}=$};

\draw (0,0)--(0,1);
\draw (1,0)--(1,1);
\draw (2,0.5) node[]{$\dots$};
\draw (3,0)--(3,1);
\draw (4,1) -- (6,0);
\draw (4,0) to [out=90,in=180] +(.5,.4) to [out=0,in=90] +(.5,-.4);
\draw (5,1) to [out=-90, in = 180] +(.5,-.4) to [out = 0, in =-90] +(0.5,0.4);
\end{scope}
\end{tikzpicture}.
\]
\caption{An idempotent in $A_{n+2}$}
\label{idemis}
\end{figure}
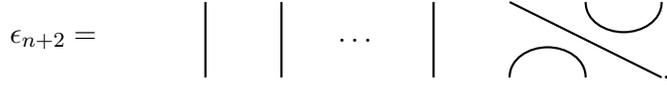

We also introduce the $(n+2,n)$-Brauer
diagram $g_{n+2,n}$ which has $n$ non-crossing propagating lines
connecting the left-most nodes and one cup connecting
the remaining northern nodes and the $(n,n+2)$-Brauer diagram $f_{n,n+2}$
which has $n-1$ non-crossing propagating lines connecting the
left-most nodes, one propagating line connecting the right-most nodes
and one cap connecting the remaining southern nodes. We
illustrate these two diagrams in Figures \ref{gis} and \ref{fis}. It is clear
that $f_{n,n+2}g_{n+2,n}= \id_n$ and $g_{n+2,n} f_{n,n+2}=
\epsilon_{n+2}$.  

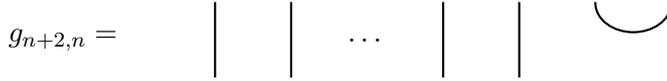
\begin{figure}[ht]
\begin{tikzpicture}[scale=1,thick,>=angle 90]
\begin{scope}
\draw (-2,0.5) node[]{$g_{n+2,n}=$};
\draw (0,0)--(0,1);
\draw (1,0)--(1,1);
\draw (2,0.5) node[]{$\dots$};
\draw (3,0)--(3,1);
\draw (4,0)--(4,1);
\draw (5,1) to [out=-90,in=180] +(0.5,-0.4) to [out=0, in =-90] +(0.5,0.4);
\end{scope}
\end{tikzpicture}
\caption{The $(n+2,n)$-Brauer diagram $g_{n+2,n}$}
\label{gis}
\end{figure}

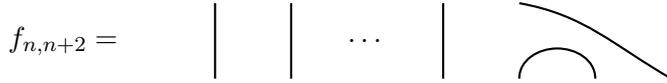
\begin{figure}[ht]
\begin{tikzpicture}[scale=1,thick,>=angle 90]
\begin{scope}
\draw (-2,0.5) node[]{$f_{n,n+2}=$};
\draw (0,0)--(0,1);
\draw (1,0)--(1,1);
\draw (2,0.5) node[]{$\dots$};
\draw (3,0)--(3,1);
\draw (4,0) to [out=90,in=180] +(0.5,0.4) to [out=0, in =90] +(0.5,-0.4);
\draw (6,0) to [out=150, in=-10] +(-2,1);
\end{scope}
\end{tikzpicture}
\caption{The $(n,n+2)$-Brauer diagram $f_{n,n+2}$}
\label{fis}
\end{figure}

One can then easily verify the following lemma.
\begin{lemma}\label{embedding An into An+2}
Let $n\geq 1$. There is an algebra isomorphism
\[
A_n \to \epsilon_{n+2} A_{n+2} \epsilon_{n+2},
\]
which maps $a\in A_n$ to $g_{n+2,n} a f_{n,n+2}$. The inverse maps
$b\in \epsilon_{n+2} A_{n+2} \epsilon_{n+2}$ to $f_{n,n+2} b
g_{n+2,n}$.
\end{lemma}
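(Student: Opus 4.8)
The plan is to verify directly that the two maps described in the statement are mutually inverse algebra homomorphisms, exploiting the diagrammatic relations $f_{n,n+2}g_{n+2,n} = \id_n$ and $g_{n+2,n}f_{n,n+2} = \epsilon_{n+2}$ which have already been recorded in the text. First I would check that the map $\Phi\colon a \mapsto g_{n+2,n}\, a\, f_{n,n+2}$ actually lands in the corner algebra $\epsilon_{n+2}A_{n+2}\epsilon_{n+2}$: since $g_{n+2,n} = \epsilon_{n+2}\, g_{n+2,n}$ and $f_{n,n+2} = f_{n,n+2}\,\epsilon_{n+2}$ (each of these is a one-line check at the level of Brauer diagrams, reading off which propagating lines, cups and caps survive the composition), the image $g_{n+2,n}\,a\,f_{n,n+2}$ is automatically fixed on both sides by $\epsilon_{n+2}$. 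Dually, the candidate inverse $\Psi\colon b\mapsto f_{n,n+2}\, b\, g_{n+2,n}$ clearly lands in $A_n$, since its source and target have $n$ nodes.

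Next I would show that $\Phi$ is an algebra homomorphism. For $a,a'\in A_n$ we compute
\[
\Phi(a)\Phi(a') = g_{n+2,n}\,a\,f_{n,n+2}\,g_{n+2,n}\,a'\,f_{n,n+2}
= g_{n+2,n}\,a\,\id_n\,a'\,f_{n,n+2} = \Phi(aa'),
\]
where the middle equality is exactly the relation $f_{n,n+2}g_{n+2,n}=\id_n$. One must also confirm that $\Phi$ sends the unit $\id_n$ to the unit of the corner algebra, namely $g_{n+2,n}\,\id_n\,f_{n,n+2} = g_{n+2,n}f_{n,n+2} = \epsilon_{n+2}$, which is precisely the identity element of $\epsilon_{n+2}A_{n+2}\epsilon_{n+2}$.

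Finally I would verify that $\Psi$ inverts $\Phi$. On the $A_n$ side,
\[
\Psi(\Phi(a)) = f_{n,n+2}\,g_{n+2,n}\,a\,f_{n,n+2}\,g_{n+2,n} = \id_n\,a\,\id_n = a,
\]
again using $f_{n,n+2}g_{n+2,n}=\id_n$ twice. On the corner side, for $b\in\epsilon_{n+2}A_{n+2}\epsilon_{n+2}$ we have $b=\epsilon_{n+2}\,b\,\epsilon_{n+2}$, so
\[
\Phi(\Psi(b)) = g_{n+2,n}\,f_{n,n+2}\,b\,g_{n+2,n}\,f_{n,n+2}
= \epsilon_{n+2}\,b\,\epsilon_{n+2} = b,
\]
using $g_{n+2,n}f_{n,n+2}=\epsilon_{n+2}$. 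The only genuine subtlety — the ``hard part'' such as it is — lies in the fact that multiplication in $A_n$ carries signs coming from $(-1)^{\gamma(d_1,d_2)}$ in \eqref{Composition Brauer diagrams}, so one should check that these sign factors are consistent across the associated products above; in practice the relations $f_{n,n+2}g_{n+2,n}=\id_n$ and $g_{n+2,n}f_{n,n+2}=\epsilon_{n+2}$ are to be read as genuine equalities in $A_{n+2}$ (signs included), and once they hold the computations are purely formal rearrangements that never require reassociating a product in a way that could introduce an unaccounted sign. This is why the statement can be dispatched by the ``one can easily verify'' remark, and I would simply note that the sign bookkeeping in $f_{n,n+2}g_{n+2,n}=\id_n$ and $g_{n+2,n}f_{n,n+2}=\epsilon_{n+2}$ has already been absorbed into those stated diagrammatic identities.
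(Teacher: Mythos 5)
Your verification is correct and is exactly the argument the paper has in mind: the paper states the identities $f_{n,n+2}g_{n+2,n}=\id_n$ and $g_{n+2,n}f_{n,n+2}=\epsilon_{n+2}$ and then simply asserts that the lemma is ``easily verified,'' and your formal corner-algebra computation (together with the observation that the sign bookkeeping is already absorbed into those two diagrammatic identities) is the intended verification. No gaps.
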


We define a functor $F_{n+2} \colon A_{n+2}\text{-mod} \to
A_n\text{-mod}$ by mapping $M$ to $\epsilon_{n+2}M$ and using the
isomorphism of Lemma \ref{embedding An into An+2}. We also define
\begin{align*}
G_n \colon  A_{n}\text{-mod} & \to A_{n+2}\text{-mod} \\
 M & \mapsto A_{n+2} \epsilon_{n+2}\otimes_{A_n} M.
\end{align*}
It is clear that $F_{n+2} G_{n}(M) \cong M$ for all $A_{n}$-modules
$M$. Hence $G_n$ gives an embedding of $A_n\text{-mod}$ in
$A_{n+2}\text{-mod}.$

The functors $F_n$ and $G_n$ as defined above are analogues of
corresponding functors for the ordinary Brauer algebra considered in
\cite{CDM} and \cite{DWH}. As vector spaces, the standard modules
$W(\lambda)$ for the periplectic Brauer algebra are isomorphic to the
corresponding standard modules $\Delta(\lambda)$ for the Brauer
algebra, and so it is easy to see that the arguments in \cite{CDM} and \cite{DWH} generalise to the periplectic case to give the following
result.

\begin{lemma}\label{embedding cell modules}
For $n\geq 1$ and $\lambda \in \Lambda_n$ we have
\[
G_n (W_n(\lambda)) = W_{n+2}(\lambda).
\]
\end{lemma}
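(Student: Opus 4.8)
The plan is to establish the equality $G_n(W_n(\lambda)) = W_{n+2}(\lambda)$ by an explicit computation at the level of spanning sets, using the concrete description of $G_n$ as $A_{n+2}\epsilon_{n+2}\otimes_{A_n}-$ together with the standard basis description of $W_n(\lambda)$ from Section~\ref{Section standard modules}. First I would recall that $W_n(\lambda)$ has basis $C_{(S,T)(S_1,T_1)}^\lambda = S\, m_{T,T_1}^\lambda\, S_1^{\rm op}$ (modulo higher terms) for $S\in I(n,t)$ and $T\in\mathcal{T}_\lambda$, with $t=|\lambda|$ fixed, and that $W_n(\lambda) = V(n,t)\otimes_{\mathfrak{S}_t}S^\lambda$. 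Thus $G_n(W_n(\lambda)) = A_{n+2}\epsilon_{n+2}\otimes_{A_n}\bigl(V(n,t)\otimes_{\mathfrak{S}_t}S^\lambda\bigr)$, and the goal is to identify this with $V(n+2,t)\otimes_{\mathfrak{S}_t}S^\lambda = W_{n+2}(\lambda)$.

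The key step is a diagrammatic identification. I would define a map by sending a generator $a\epsilon_{n+2}\otimes(S\otimes x)$ (with $a\in A_{n+2}$, $S\in I(n,t)$, $x\in S^\lambda$) to $(a\star\epsilon_{n+2}\star g_{n+2,n}\star S)\otimes x \in V(n+2,t)\otimes_{\mathfrak{S}_t}S^\lambda$, where $g_{n+2,n}$ is the glueing diagram of Figure~\ref{gis}; the point is that $\epsilon_{n+2} = g_{n+2,n}f_{n,n+2}$ so that $\epsilon_{n+2}$ factors through the smaller diagram algebra, and composing with $g_{n+2,n}$ produces an $(n+2,t)$-Brauer diagram with exactly $t$ propagating lines, i.e.\ an element of $V(n+2,t)$. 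One must check this is well-defined over the tensor product over $A_n$ (using the isomorphism $A_n\cong\epsilon_{n+2}A_{n+2}\epsilon_{n+2}$ of Lemma~\ref{embedding An into An+2}) and over $\mathfrak{S}_t$, and that it respects the $A_{n+2}$-action; each of these is a routine check that reduces to the compatibility of diagram multiplication with the glueing maps $f_{n,n+2}$, $g_{n+2,n}$. Since the multiplication in $A_n$ agrees up to sign with that in the Brauer algebra, and signs do not affect linear spans, the bookkeeping is identical to the Brauer-algebra case.

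The main obstacle, and the reason the statement is phrased as ``it is easy to see that the arguments generalise,'' is verifying that this candidate map is a bijection (hence an isomorphism) rather than merely a surjection. For this I would exhibit the inverse directly: every $(n+2,t)$-Brauer diagram with $t$ propagating lines can be written (up to sign) as $b\star g_{n+2,n}\star S'$ for some $b\in A_{n+2}$ factoring through $\epsilon_{n+2}$ and some $S'\in I(n,t)$, which is where one invokes $f_{n,n+2}g_{n+2,n}=\id_n$ to split off the glueing diagram cleanly. The sign conventions from \eqref{Composition Brauer diagrams} must be tracked consistently through the construction, but because both sides are built from the same underlying diagrams, all signs cancel in the final comparison of bases, giving the asserted equality.

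Since the vector-space isomorphisms $W_n(\lambda)\cong\Delta(\lambda)$ with the ordinary Brauer-algebra standard modules intertwine these functors with the Brauer-algebra ones of \cite{CDM, DWH}, I would cite those references for the analogous computation and simply note that the sign-twisted multiplication leaves the argument unchanged, completing the proof.
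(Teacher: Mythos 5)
Your proposal is correct and follows essentially the same route as the paper, which itself gives no detailed argument but simply observes that $F_n$ and $G_n$ are the periplectic analogues of the Brauer-algebra functors of \cite{CDM} and \cite{DWH}, that the standard modules agree as vector spaces with the Brauer-algebra ones, and that the sign-twisted multiplication does not affect the linear-span arguments. Your explicit diagrammatic map via $g_{n+2,n}$ and the identification $W_n(\lambda)=V(n,t)\otimes_{\mathfrak{S}_t}S^\lambda$ is exactly the computation the paper is implicitly delegating to those references.
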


It follows immediately that for $n\geq 3$ and $\lambda\in \Lambda_n$ we have
$$F_n(W_n(\lambda))\cong\left\{\begin{array}{ll}
W_{n-2}(\lambda) & \text{if}\ \lambda\in\Lambda_{n-2}\\
0& \text{otherwise}\end{array}\right.$$
and from the exactness of $F_n$ that for $\lambda\in \Lambda_n'$ we have
$$F_n(L_n(\lambda))\cong\left\{\begin{array}{ll}
L_{n-2}(\lambda) & \text{if}\ \lambda\in\Lambda_{n-2}'\\
0& \text{otherwise.}\end{array}\right. $$
By induction and exactness of $F_n$ we obtain the following result.

\begin{lemma}\label{Lemma reduce decomposition numbers}
Let $\lambda \in \Lambda_n$ and $\mu \in \Lambda_n'$.  Then
\[
[W_n(\lambda): L_n(\mu)]= \begin{cases} [W_{\abs{\mu}}(\lambda):
    L_{\abs{\mu}}(\mu)] &\text{ if } \abs{\lambda} \leq \abs{\mu},
  \\ 0 &\text{ if }\abs{\lambda} > \abs{\mu}. \end{cases} 
\]
\end{lemma}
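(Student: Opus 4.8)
The plan is to prove the identity by downward reduction through the exact localisation functors, using induction on $n$ with the partition $\mu$ held fixed. The two ingredients already in hand are: the functor $F_n$, being multiplication by the idempotent $\epsilon_n$, is exact; and the explicit action of $F_n$ on standard and simple modules recorded just above the statement, namely $F_n(W_n(\lambda)) \cong W_{n-2}(\lambda)$ when $\abs{\lambda} \leq n-2$ while $F_n(W_n(\lambda)) = 0$ when $\abs{\lambda} = n$, together with $F_n(L_n(\sigma)) \cong L_{n-2}(\sigma)$ when $\sigma \in \Lambda_{n-2}'$ and $F_n(L_n(\sigma)) = 0$ otherwise. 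Here I use that, for $\lambda \in \Lambda_n$, membership $\lambda \in \Lambda_{n-2}$ is equivalent to $\abs{\lambda} \leq n-2$.

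First I would dispose of the base case $n = \abs{\mu}$. Since $\lambda \in \Lambda_n$ forces $\abs{\lambda} \leq n = \abs{\mu}$, only the first branch of the claimed formula can occur, and it reads $[W_n(\lambda):L_n(\mu)] = [W_{\abs{\mu}}(\lambda):L_{\abs{\mu}}(\mu)]$, which is a tautology. For the inductive step I would assume $n > \abs{\mu}$, so that $n - \abs{\mu} \geq 2$ and hence $\abs{\mu} \leq n-2$; in particular $\mu \in \Lambda_{n-2}'$ and $F_n(L_n(\mu)) \cong L_{n-2}(\mu) \neq 0$. Applying the exact functor $F_n$ to a composition series of $W_n(\lambda)$ produces a filtration of $F_n(W_n(\lambda))$ whose factors are the images $F_n(L_n(\sigma))$ of the original composition factors. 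Each such image is either zero (when $\abs{\sigma} = n$) or a simple module $L_{n-2}(\sigma)$ (when $\abs{\sigma} \leq n-2$), and distinct $\sigma \in \Lambda_{n-2}'$ yield pairwise non-isomorphic simples; consequently the multiplicity of $L_{n-2}(\mu)$ in $F_n(W_n(\lambda))$ equals the multiplicity of $L_n(\mu)$ in $W_n(\lambda)$.

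It then remains to split on $\abs{\lambda}$. If $\abs{\lambda} \leq n-2$ then $F_n(W_n(\lambda)) \cong W_{n-2}(\lambda)$ and I conclude $[W_n(\lambda):L_n(\mu)] = [W_{n-2}(\lambda):L_{n-2}(\mu)]$, to which the inductive hypothesis for $n-2$ applies verbatim (noting $\lambda \in \Lambda_{n-2}$ and $\mu \in \Lambda_{n-2}'$, and that this hypothesis already covers both the sub-case $\abs{\lambda}\leq\abs{\mu}$ and the sub-case $\abs{\mu}<\abs{\lambda}\leq n-2$). If instead $\abs{\lambda} = n$ then $F_n(W_n(\lambda)) = 0$ forces $[W_n(\lambda):L_n(\mu)] = 0$, which matches the second branch of the formula since $\abs{\lambda} = n > \abs{\mu}$.

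The only point requiring care, and the natural place for an error to hide, is the bookkeeping in the middle step: one must verify that applying the exact functor genuinely transfers multiplicities, that is, that composition factors with $\abs{\sigma}=n$ are truly annihilated (so they contribute nothing) while those with $\abs{\sigma}\leq n-2$ descend to honest, pairwise distinct simple modules $L_{n-2}(\sigma)$. This is precisely what the stated behaviour of $F_n$ on simples guarantees, and it is also why the hypotheses $\lambda \in \Lambda_n$ and $\mu \in \Lambda_n'$ are needed (in particular $\abs{\mu}\geq 1$, so that each functor invoked during the reduction acts on $A_m$ with $m \geq 3$, where the quoted formulas are valid). Everything else is formal.
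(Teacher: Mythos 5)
Your proof is correct and is exactly the argument the paper intends: the paper's own ``proof'' is the single sentence ``By induction and exactness of $F_n$ we obtain the following result,'' relying on the displayed formulas for $F_n(W_n(\lambda))$ and $F_n(L_n(\sigma))$, and you have simply filled in the induction, the transfer of multiplicities along the exact functor, and the check that $\abs{\mu}\geq 1$ keeps every $F_m$ in the valid range $m\geq 3$. No gaps.
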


\section{BGG reciprocity in arbitrary characteristic}
\label{Sec BGG}
We continue to work over an arbitrary field $k$ of characteristic $p\geq 0$. In the last Section we saw
that certain aspects of the representation 
theories of the periplectic and Brauer algebras are very similar. In
contrast, the results in this Section begin to illustrate their
striking differences. 

\begin{proposition}\label{upLis}
For any $\lambda \in \Lambda_n'$ we have
$$\Upsilon(L_n(\lambda)) \cong L_n(\lambda^M)$$
where $\lambda^M$ denotes the Mullineux conjugate of the partition $\lambda$.
\end{proposition}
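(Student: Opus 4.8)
The plan is to identify the head of $\Upsilon(L_n(\lambda))$ by tracing how the functor $\Upsilon$ interacts with the combinatorial labelling of simple modules. Since $\Upsilon$ is a contravariant equivalence of categories, it sends the simple module $L_n(\lambda)$ to a simple module, which must be $L_n(\nu)$ for some $\nu \in \Lambda_n'$; the entire content of the proposition is the identification $\nu = \lambda^M$. The natural strategy is to realise $\Upsilon(L_n(\lambda))$ as the head (or socle, up to the contravariance) of $\Upsilon$ applied to the standard module $W_n(\lambda)$, and then to compute $\Upsilon(W_n(\lambda))$ using the duality results already established for standard modules.

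First I would relate $\Upsilon$ applied to standard modules to the modules $\widetilde{W}_n$. By Proposition~\ref{phicostandard} we have $\phi(W_n^{\mathrm{op}}(\lambda)) \cong \widetilde{W}_n(\lambda)$, and since $\Upsilon$ is defined via $\phi$ and linear duality, the costandard module $\Upsilon(W_n(\lambda))$ should be identifiable with (a dual of) $\widetilde{W}_n(\lambda)$. The key input is then Proposition~\ref{tildeW}, which says that $\widetilde{W}_n(\lambda)$ and $W_n(\lambda^T)$ have the same composition factors. Combining this with the symmetric-group fact recorded in Section~\ref{symstuff} that $D^\lambda \otimes \mathrm{sgn} \cong D^{\lambda^M}$ (the very definition of the Mullineux conjugate), I expect the head of $\Upsilon(W_n(\lambda))$ to be forced to be $L_n(\lambda^M)$ rather than $L_n(\lambda^T)$. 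The point is that passing from $W_n$ to $\widetilde{W}_n$ twists by the sign representation on the $\mathfrak{S}_t$-level, and the simple head transforms under this sign twist precisely by the Mullineux map, not merely by transposition.

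More concretely, I would argue as follows. Since $\Upsilon$ is an exact contravariant equivalence fixing the block decomposition, $\Upsilon(L_n(\lambda))$ is simple and appears in the head of $\Upsilon(W_n(\lambda))$. Using Proposition~\ref{standardproperties}(1), the simple $L_n(\lambda)$ occurs in $W_n(\lambda)$ with multiplicity one and is maximal, so under the duality its image $\Upsilon(L_n(\lambda))$ is the unique simple occurring with multiplicity one at the appropriate extreme of $\Upsilon(W_n(\lambda))$. I would then match composition factors: $\Upsilon(W_n(\lambda))$ has the same composition factors as $\widetilde{W}_n(\lambda)$, hence by Proposition~\ref{tildeW} the same composition factors as $W_n(\lambda^T)$. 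But I must be careful: having the same composition factors determines a set, not the head. To pin down the head I would use the explicit sign-twist description, namely that the head of the relevant symmetric-group constituent $S^{\lambda^T}$, after the sign twist built into $\widetilde{W}_n$, is $D^{\lambda^M}$ by the definition $D^\lambda \otimes \mathrm{sgn} \cong D^{\lambda^M}$, together with the observation that $(S^{\lambda^T})^*$ carries $D^\lambda \otimes \mathrm{sgn}$ in its head.

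The main obstacle will be the last step: carefully controlling the \emph{head} (as opposed to the full set of composition factors) through the chain of identifications, since Proposition~\ref{tildeW} only preserves composition factors and could in principle lose track of which simple sits at the top. The delicate point is that $\widetilde{W}_n(\lambda)$ is built from $(S^{\lambda^T})^*$, and one must correctly track that tensoring with $\mathrm{sgn}$ sends the head $D^\lambda$ of $S^\lambda$ to $D^{\lambda^M}$ rather than to $D^{\lambda^T}$; these coincide only when $\lambda$ is a $p$-core. I would handle this by invoking the contravariant equivalence $\Upsilon$ to transfer the multiplicity-one maximality of $L_n(\lambda)$ in $W_n(\lambda)$ into a statement about the socle of $\Upsilon(W_n(\lambda)) \cong \Upsilon(W_n(\lambda))$, then identifying that socle via the exact functor $V(n,t)\otimes_{\mathfrak{S}_t}-$ applied to the corresponding symmetric-group statement $\Upsilon_{\mathrm{H}}(D^\lambda) \cong D^{\lambda^M}$, which is exactly the content of the Mullineux definition combined with $(D^\lambda)^* \cong D^\lambda$.
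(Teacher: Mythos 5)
Your overall strategy differs from the paper's: the paper proves this by induction on $n$, using the localisation functor $F_n$ to reduce to the top layer $|\lambda|=n$, where $L_n(\lambda)=D^\lambda$ and the claim is immediate from $\phi|_{{\rm H}_n}=\alpha\iota$, so that $\Upsilon(D^\lambda)=(D^\lambda\otimes{\rm sgn})^*\cong D^{\lambda^M}$. Your route through standard modules is not unreasonable, but as written it has a genuine gap at exactly the point you flag: you never actually compute $\Upsilon(W_n(\lambda))$, and the two results you invoke cannot do it. Proposition \ref{phicostandard} identifies $\phi(W_n^{\rm op}(\lambda))$, which is the \emph{image} of the right standard module under the anti-automorphism (equivalently, the left standard module for the twisted basis $\alpha(m^\lambda_{T_1,T_2})$); this is a standard-type module with simple \emph{head}, and it is not the $\phi$-twisted linear dual $\Upsilon(W_n(\lambda))$, which is a costandard-type object with simple \emph{socle} $\Upsilon(L_n(\lambda))$. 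These two modules need not be isomorphic (they are related by the bilinear form of the standardly based structure, whose image is the simple head); conflating them is the same error as identifying $\Delta(\lambda)$ with $\nabla(\lambda)$ for a cellular algebra. Proposition \ref{tildeW} only controls composition factors, which, as you yourself note, cannot detect the socle; and the claim that $\Upsilon(W_n(\lambda))$ has the same composition factors as $\widetilde{W}_n(\lambda)$ is itself essentially equivalent to the statement being proved.

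Your proposed fix --- pushing the symmetric-group statement through $V(n,t)\otimes_{\mathfrak{S}_t}-$ --- does not close the gap either: $\Upsilon$ is contravariant and does not commute with this induction-type functor (a duality exchanges $\otimes$-induction with coinduction), and exactness of $V(n,t)\otimes_{\mathfrak{S}_t}-$ does not imply it preserves heads or socles. The approach could probably be salvaged, but it needs two genuinely new ingredients absent from your sketch: (i) the pairing $\widetilde{W}_n(\lambda)\times W_n(\lambda)\to\mk$ induced by multiplication in $A_n^{\geq\lambda}$, which yields a nonzero $A_n$-map $\widetilde{W}_n(\lambda)\to\Upsilon(W_n(\lambda))$ whose image must then be shown to be simple; and (ii) a proof that the head of $\widetilde{W}_n(\lambda)=V(n,t)\otimes_{\mathfrak{S}_t}(S^{\lambda^T})^*$ is $L_n(\lambda^M)$, for instance using that the globalisation functors are left adjoint to the exact localisation functors and therefore preserve simple heads. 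By that point you would essentially have reconstructed the induction via $F_n$ that the paper runs directly on the simple modules.
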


\begin{proof}
We prove this by induction on $n$. If $n=0$ or $1$ there is nothing to
prove as $A_0 = A_1 = \mk$ and there is only one simple module. Let
$n\geq 2$. If $|\lambda | = n$ then $L_n(\lambda) = D^\lambda$ (lifted
to $A_n$). In this case we have $\Upsilon(D^\lambda) = (D^\lambda
\otimes {\rm sgn})^* = (D^{\lambda^M})^* \cong D^{\lambda^M}$ so we are
done.

If $|\lambda |\leq n-2$ then $\Upsilon(L_n(\lambda))$ is certainly a
simple $A_n$-module.
Applying the localisation functor, we get the $A_{n-2}$-module $F_n(\Upsilon(L_n(\lambda)) = \epsilon_n \Upsilon(L_n(\lambda))$, where we use the isomorphism $A_{n-2} \cong \epsilon_n A_n \epsilon_n$ from Lemma \ref{embedding An into An+2}.  We have $$\epsilon_n \Upsilon(L_n(\lambda)) = \Upsilon(\phi(\epsilon_n)L_n(\lambda)).$$
Now $\phi(\epsilon_n)L_n(\lambda)$ is a simple $\phi(\epsilon_n) A_n\phi(\epsilon_n)$-module and $\phi(\epsilon_n) A_n\phi(\epsilon_n)\cong A_{n-2}$ (this can be seem by swapping the roles of $f$ and $g$ in Lemma \ref{embedding An into An+2}).  Using the corresponding localisation functor we get $\phi(\epsilon_n)L_n(\lambda) \cong L_{n-2}(\lambda)$.   
Finally, we obtain
$$F_n(\Upsilon(L_n(\lambda)) \cong \Upsilon(L_{n-2}(\lambda)) \cong L_{n-2}(\lambda^M)$$ by induction. 
Thus we must have
$\Upsilon(L_n(\lambda)) \cong L_n(\lambda^M)$ as required.
\end{proof}

\begin{lemma}\label{lemmaproj}
Let $\lambda\in \Lambda_n'$ and $e_\lambda$ be a primitive idempotent
satisfying $P_n(\lambda)=A_n e_\lambda$. Then we have
$$A_n\phi(e_\lambda) = P_n(\lambda^M).$$
\end{lemma}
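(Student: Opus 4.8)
The plan is to relate the projective module $A_n\phi(e_\lambda)$ to the functor $\Upsilon$ and then apply Proposition \ref{upLis}. First I would recall that $\phi$ is an algebra anti-automorphism on $A_n$, so if $e_\lambda$ is a primitive idempotent then $\phi(e_\lambda)$ is again a primitive idempotent (anti-automorphisms send idempotents to idempotents, and primitivity is preserved since $\phi$ induces an anti-isomorphism of the idempotent-conjugation structure). Consequently $A_n\phi(e_\lambda)$ is an indecomposable projective left $A_n$-module, and the whole task reduces to identifying which simple module sits at its top, i.e.\ showing that its head is $L_n(\lambda^M)$.

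The key step is to connect the right ideal $e_\lambda A_n$ (or rather its image under $\phi$) with the duality functor $\Upsilon$. Recall $\Upsilon(M) = \Hom_\mk(M,\mk)$ with the twisted action $a\gamma(m) = \gamma(\phi(a)m)$. I would use the standard fact that for a projective $P = A_n e$, applying the contravariant duality built from $\phi$ interchanges left-ideal projectives with right-ideal ones: concretely, $\Upsilon(A_n e_\lambda)$ is an injective $A_n$-module whose socle is $\Upsilon(L_n(\lambda)) \cong L_n(\lambda^M)$ by Proposition \ref{upLis}. The main point is that the projective cover of $L_n(\lambda^M)$ is, by definition, $P_n(\lambda^M)$, while $\Upsilon$ sends projective covers to injective hulls and back; so the injective hull of $L_n(\lambda^M)$ is $\Upsilon(P_n(\lambda))$, and applying $\Upsilon$ once more (it is an involutive contravariant equivalence) recovers $P_n(\lambda^M) = \Upsilon(\text{injective hull of } L_n(\lambda^M))$. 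The bridge I need is that $A_n\phi(e_\lambda)$ is exactly the module obtained from $e_\lambda A_n$ (the projective right module) via $\phi$, and that this right-module-to-left-module passage matches the $\Upsilon$ duality on the level of top/socle composition factors.

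More precisely, I would argue as follows. The anti-automorphism $\phi$ induces an equivalence between left $A_n$-modules and right $A_n$-modules; applying it to the right projective $e_\lambda A_n$ yields the left projective $\phi(A_n)\phi(e_\lambda) = A_n\phi(e_\lambda)$. On the other hand, the $\mk$-linear dual combined with $\phi$ is precisely $\Upsilon$, so the head of the left module $A_n\phi(e_\lambda)$ corresponds to the socle of the right module $e_\lambda A_n$, which is the right-module version of $L_n(\lambda)$. Composing the identification of tops under $\phi$ with the computation $\Upsilon(L_n(\lambda)) \cong L_n(\lambda^M)$ from Proposition \ref{upLis} then shows that the head of $A_n\phi(e_\lambda)$ is $L_n(\lambda^M)$. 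Since $A_n\phi(e_\lambda)$ is indecomposable projective with simple head $L_n(\lambda^M)$, it must equal $P_n(\lambda^M)$ by uniqueness of projective covers.

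The main obstacle I expect is bookkeeping the variance carefully: $\phi$ is contravariant and swaps left and right modules, while $\Upsilon$ combines $\phi$ with $\mk$-linear duality, so one must be precise about whether a given identification uses $\phi$ alone, duality alone, or the composite, and about whether it acts on tops or socles. In particular I must verify cleanly that the head of $A_n\phi(e_\lambda)$ as a left module equals $\Upsilon$ applied to the head of $A_n e_\lambda$, rather than some other twist; the cleanest route is probably to observe $\Upsilon(P_n(\lambda))$ is the injective hull of $L_n(\lambda^M)$ and that $A_n \phi(e_\lambda)$ is the projective cover of the socle of that injective hull, using that $\Upsilon$ exchanges projectives and injectives because it is a contravariant equivalence. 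Once the variance is pinned down, the identification $P_n(\lambda^M) = A_n\phi(e_\lambda)$ follows formally.
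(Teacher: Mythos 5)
Your overall strategy is the same as the paper's: observe that $\phi(e_\lambda)$ is a primitive idempotent, so $A_n\phi(e_\lambda)$ is an indecomposable projective, and then identify its head as $L_n(\lambda^M)$ using Proposition \ref{upLis}. However, the step where you actually make that identification contains variance errors, and the ``cleanest route'' you offer at the end is circular. Concretely: (i) your identity $P_n(\lambda^M)=\Upsilon(\text{injective hull of }L_n(\lambda^M))$ is false --- since $\Upsilon$ sends the injective hull of a simple $L$ to the projective cover of $\Upsilon(L)$ and the Mullineux map is an involution, the right-hand side is $P_n(\lambda)$, not $P_n(\lambda^M)$; (ii) the passage from $e_\lambda A_n$ to $A_n\phi(e_\lambda)$ is the \emph{covariant} equivalence from right to left modules given by twisting the action through $\phi$ (no $\mk$-linear dual is applied to $e_\lambda A_n$ here), so it matches the head of $A_n\phi(e_\lambda)$ with the \emph{head} of $e_\lambda A_n$, not its socle --- and the socle of an indecomposable projective need not be its labelling simple; (iii) the final assertion that ``$A_n\phi(e_\lambda)$ is the projective cover of the socle of $\Upsilon(P_n(\lambda))$'' is precisely the statement of the lemma, so it cannot be used to prove it. Where linear duality genuinely enters is in identifying the simple right module at the top of $e_\lambda A_n$ with $(L_n(\lambda))^*$; only after that does the $\phi$-twist produce $\Upsilon(L_n(\lambda))\cong L_n(\lambda^M)$.

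The gap is closed by a one-line computation, which is exactly what the paper does: writing $A_n\phi(e_\lambda)=P_n(\mu)$ for some $\mu\in\Lambda_n'$, one has
$\Hom_{A_n}(A_n\phi(e_\lambda),\Upsilon(L_n(\lambda)))\cong \phi(e_\lambda)\Upsilon(L_n(\lambda))=\Upsilon(e_\lambda L_n(\lambda))\cong \Upsilon(\Hom_{A_n}(P_n(\lambda),L_n(\lambda)))\cong\mk$,
using $\Hom_{A_n}(A_ne,M)\cong eM$ and the fact that $(\phi(e_\lambda)\gamma)(m)=\gamma(\phi^2(e_\lambda)m)=\gamma(e_\lambda m)$ (note $\phi^2=\id$ on $A_n$, since $\phi^2$ multiplies a diagram by $(-1)^{\#\mathrm{cups}+\#\mathrm{caps}}$ and these numbers agree for $(n,n)$-diagrams). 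Since $\Upsilon(L_n(\lambda))\cong L_n(\lambda^M)$ by Proposition \ref{upLis}, this forces $\mu=\lambda^M$. If you prefer your more structural phrasing, you must still carry out this computation (or its equivalent for the top of $e_\lambda A_n$) to tie $A_n\phi(e_\lambda)$ to $\Upsilon$; as written, that link is asserted rather than proved.
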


\begin{proof}
Clearly $\phi(e_\lambda)$ is a primitive idempotent. So
$A_n\phi(e_\lambda) = P_n(\mu)$ for some $\mu \in \Lambda_n'$. Now we
have
\begin{eqnarray*}
\Hom_{A_n} (P_n(\mu), L_n(\lambda^M)) &\cong& \Hom_{A_n}(
A_n\phi(e_\lambda), \Upsilon(L_n(\lambda))) \\ 
&\cong& \phi(e_\lambda)\Upsilon(L_n(\lambda)) \\
&=& \Upsilon(e_\lambda L_n(\lambda))\\
&\cong & \Upsilon(\Hom_{A_n}(A_ne_\lambda , L_n(\lambda))\\
&=& \Upsilon (\Hom_{A_n}(P_n(\lambda), L_n(\lambda))\\
&=& \Upsilon (\mk) \cong \mk.
\end{eqnarray*}
This shows that we must have $\mu = \lambda^M$ as required.
\end{proof}

The following theorem generalises the BGG-reciprocity given in \cite[Theorem 3]{Coulembier} to field of arbitrary characteristics.

\begin{theorem}\label{bgg}
Let $\lambda \in \Lambda_n'$. There is a filtration of the projective indecomposable module $P_n(\lambda)$ by standard modules and if we denote by $(P_n(\lambda): W_n(\mu))$ the number of subquotients in this filtration which are isomorphic to $W_n(\mu)$ for $\mu\in \Lambda_n$ then we have
$$(P_n(\lambda): W_n(\mu)) = [W_n(\mu^T): L_n(\lambda^M)]$$ where
$\mu^T$ denotes the transpose of the partition $\mu$ and $\lambda^M$
denotes the Mullineux conjugate of the partition $\lambda$.
\end{theorem}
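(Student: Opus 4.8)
The plan is to reduce the statement to the Du--Rui multiplicity formula of Proposition~\ref{standardproperties}(2), to transport the resulting tensor computation from right modules to left modules using the anti-automorphism $\phi$, and then to finish with Lemma~\ref{lemmaproj} and Proposition~\ref{tildeW}.

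First, the existence of a standard filtration of $P_n(\lambda)$ is immediate from Proposition~\ref{standardproperties}(2), which moreover tells us that the multiplicity is well defined, independently of the chosen filtration, and is given by
$$(P_n(\lambda):W_n(\mu)) = \dim\left(W^{\rm op}_n(\mu)\otimes_{A_n}P_n(\lambda)\right).$$
Writing $P_n(\lambda)=A_n e_\lambda$ for a primitive idempotent $e_\lambda$, the right--hand side collapses to $\dim\left(W^{\rm op}_n(\mu)\,e_\lambda\right)$, since $A_ne_\lambda$ is a direct summand of $A_n$ as a left module. Thus everything reduces to computing $\dim\left(W^{\rm op}_n(\mu)\,e_\lambda\right)$.

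Next I would pass to left modules via $\phi$. By Proposition~\ref{phicostandard} the left module $\phi(W^{\rm op}_n(\mu))$ is $\widetilde W_n(\mu)$, with left action $a\cdot v = v\,\phi(a)$. The key point is that $\phi$ is an \emph{anti-involution}: every $(n,n)$-diagram has exactly as many cups as caps, so $\phi^2$ multiplies each diagram by $(-1)^{\#\text{cups}+\#\text{caps}}=1$ and hence $\phi^2=\id$. Consequently right multiplication by $e_\lambda$ on $W^{\rm op}_n(\mu)$ corresponds to left multiplication by $\phi(e_\lambda)$ on $\widetilde W_n(\mu)$, since $\phi(e_\lambda)\cdot v = v\,\phi^2(e_\lambda) = v\,e_\lambda$, giving
$$\dim\left(W^{\rm op}_n(\mu)\,e_\lambda\right) = \dim\left(\phi(e_\lambda)\,\widetilde W_n(\mu)\right),$$
where $\phi(e_\lambda)$ is again a primitive idempotent.

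Finally I would convert this idempotent truncation into a Hom-space and read off a composition multiplicity. Using the identification $\phi(e_\lambda)X\cong \Hom_{A_n}(A_n\phi(e_\lambda),X)$ together with Lemma~\ref{lemmaproj}, which gives $A_n\phi(e_\lambda)=P_n(\lambda^M)$, we obtain
$$\dim\left(\phi(e_\lambda)\,\widetilde W_n(\mu)\right) = \dim\Hom_{A_n}\left(P_n(\lambda^M),\widetilde W_n(\mu)\right) = [\widetilde W_n(\mu):L_n(\lambda^M)],$$
the last equality being the standard fact that $\Hom_{A_n}(P_n(\nu),-)$ computes the multiplicity of $L_n(\nu)$ (valid once $\End_{A_n}(L_n(\nu))=k$, i.e.\ over a splitting field). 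Since $\widetilde W_n(\mu)$ and $W_n(\mu^T)$ have the same composition factors by Proposition~\ref{tildeW}, this equals $[W_n(\mu^T):L_n(\lambda^M)]$, as required. I expect the main obstacle to be the careful right-to-left bookkeeping in the middle step, and in particular the verification that $\phi^2=\id$, which is exactly what makes the twist by $\phi(e_\lambda)$ land back on $e_\lambda$ with no residual sign or automorphism correction; this is also where the Mullineux twist $\lambda\mapsto\lambda^M$ and the transpose $\mu\mapsto\mu^T$ enter the formula, through Lemma~\ref{lemmaproj} and Proposition~\ref{tildeW} respectively.
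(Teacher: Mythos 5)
Your proposal is correct and follows essentially the same route as the paper: Proposition~\ref{standardproperties}(2) to express $(P_n(\lambda):W_n(\mu))$ as $\dim\bigl(W^{\rm op}_n(\mu)e_\lambda\bigr)$, transport through $\phi$ via Proposition~\ref{phicostandard}, then Lemma~\ref{lemmaproj} and Proposition~\ref{tildeW}. The only cosmetic difference is that you verify $\phi^2=\id$ to justify the right-to-left transfer, whereas the paper only needs that $\phi$ is a dimension-preserving anti-automorphism, so that $\dim\bigl(W^{\rm op}_n(\mu)e_\lambda\bigr)=\dim\phi\bigl(W^{\rm op}_n(\mu)e_\lambda\bigr)=\dim\bigl(\phi(e_\lambda)\,\widetilde W_n(\mu)\bigr)$ directly.
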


 (Note that when $p>n$ or $p=0$ we have that $\Lambda_n' = \Lambda_n \backslash \{ 	\varnothing\}$ and $\lambda^M = \lambda^T$ for all $\lambda\in \Lambda_n$.) 

\begin{proof}
Using Proposition \ref{standardproperties} (2) we have that
$$(P_n(\lambda): W_n(\mu)) = \dim (W_n^{\rm op}(\mu) \otimes_{A_n} P_n(\lambda)).$$
Now we have
\begin{eqnarray*}
\dim (W_n^{\rm op}(\mu) \otimes_{A_n} P_n(\lambda)) &=& \dim (W_n^{\rm
  op}(\mu) \otimes_{A_n} A_n e_\lambda) \\ 
&=& \dim W_n^{\rm op}(\mu) e_\lambda \\
&=& \dim \phi(W_n^{\rm op}(\mu)e_\lambda)\\
&=& \dim \phi(e_\lambda) \phi (W_n^{\rm op}(\mu))\\
&=& \dim \phi(e_\lambda) \widetilde{W}_n(\mu)
\end{eqnarray*}
using Proposition \ref{phicostandard}.  Then we get
\begin{eqnarray*}
\dim \phi(e_\lambda) \widetilde{W}_n(\mu) &=& \dim \Hom_{A_n}(A_n\phi(e_\lambda), \widetilde{W}_n(\mu) \\
&=& \dim \Hom_{A_n} (P(\lambda^M), \widetilde{W}_n(\mu)) \\
&=& [\widetilde{W}_n(\mu): L_n(\lambda^M)]
\end{eqnarray*}
using Lemma \ref{lemmaproj}. Finally, using Proposition \ref{tildeW}, we get that
$$[\widetilde{W}_n(\mu): L_n(\lambda^M)] = [W_n(\mu^T): L_n(\lambda^M)]$$
as required.
\end{proof}

As a consequence of this BGG-reciprocity, we obtain the following factorisation of the Cartan matrix of the periplectic Brauer algebra $A_n$ in arbitrary characteristic.

\begin{corollary}\label{cartan}
For $\lambda, \nu\in \Lambda_n'$ and $\mu\in \Lambda_n$ define the composition multiplicities
$$C_{\lambda \nu} = [P_n(\lambda): L_n(\nu)] \quad \mbox{and} \quad D_{\mu \nu} = [W_n(\mu): L_n(\nu)].$$
Then we have
$$C_{\lambda \nu} = \sum_{\mu\in \Lambda_n} D_{\mu^T \lambda^M} D_{\mu \nu}.$$
\end{corollary}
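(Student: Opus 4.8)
The plan is to derive the Cartan matrix factorisation directly from the BGG reciprocity of Theorem~\ref{bgg} together with the standard-filtration formula for composition multiplicities. The key observation is that the entries $C_{\lambda\nu}$ count composition factors of a projective module, and since $P_n(\lambda)$ has a standard filtration, its composition factors are exactly the composition factors of the standard sections, counted with multiplicity. So the first step is to write, for $\lambda,\nu\in\Lambda_n'$,
\begin{equation*}
C_{\lambda\nu}=[P_n(\lambda):L_n(\nu)]=\sum_{\mu\in\Lambda_n}(P_n(\lambda):W_n(\mu))\,[W_n(\mu):L_n(\nu)].
\end{equation*}
This is justified by Theorem~\ref{bgg}, which guarantees the existence of a filtration of $P_n(\lambda)$ by standard modules $W_n(\mu)$, so that composition multiplicities are additive over the sections of the filtration.

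The second step is simply to substitute the two pieces of data into this sum. The second factor is by definition $[W_n(\mu):L_n(\nu)]=D_{\mu\nu}$. For the first factor we invoke the BGG reciprocity of Theorem~\ref{bgg}, which gives
\begin{equation*}
(P_n(\lambda):W_n(\mu))=[W_n(\mu^T):L_n(\lambda^M)]=D_{\mu^T\,\lambda^M}.
\end{equation*}
Combining the two substitutions yields immediately
\begin{equation*}
C_{\lambda\nu}=\sum_{\mu\in\Lambda_n}D_{\mu^T\,\lambda^M}\,D_{\mu\nu},
\end{equation*}
which is exactly the claimed identity.

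The only point that requires a little care, and which I would flag as the main (though modest) obstacle, is the bookkeeping of index sets. In the sum over $\mu\in\Lambda_n$, the multiplicity $D_{\mu^T\,\lambda^M}=[W_n(\mu^T):L_n(\lambda^M)]$ presupposes that $\mu^T\in\Lambda_n$ and that $\lambda^M\in\Lambda_n'$. The former holds because transposition preserves the size of a partition and hence preserves $\Lambda_n$; the latter holds because the Mullineux conjugate is an involution on the set of $p$-restricted partitions of a fixed size, so $\lambda\in\Lambda_n'$ forces $\lambda^M\in\Lambda_n'$. One should also note that terms with $(P_n(\lambda):W_n(\mu))=0$ contribute nothing, and by Proposition~\ref{standardproperties}(2) these are precisely the $\mu\not\geq\lambda$, so the sum is genuinely finite and well-defined. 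With these remarks the proof is a direct chain of two substitutions and needs no further computation.
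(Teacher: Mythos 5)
Your proof is correct and is exactly the argument the paper intends: the corollary is stated as an immediate consequence of Theorem~\ref{bgg}, obtained by summing composition multiplicities over the standard filtration of $P_n(\lambda)$ and substituting the reciprocity formula for $(P_n(\lambda):W_n(\mu))$. The paper leaves this two-line substitution implicit, and your additional remarks on the index sets are accurate but not essential.
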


\section{Blocks in characteristic $p>2$} \label{Sec blocks}

Recall from Section 3 that we can define the blocks of $A_n$ as equivalence classes on $\Lambda_n$.
In \cite{Coulembier}, Coulembier described the blocks of the periplectic Brauer algebra $A_n$ when the characteristic $p$ of the field satisfies $p\notin [2,n]$. We recall his result below.

\begin{theorem} \cite[Theorem 1]{Coulembier}
Let $A_n$ be the periplectic Brauer algebra over a field of characteristic $p\notin [2,n]$. 
Let $\lambda, \mu\in\Lambda_n$. Then $\lambda$ and $\mu$ are in the same $A_n$-block if and only if they have the same $2$-core. 
\end{theorem}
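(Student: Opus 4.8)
The plan is to reduce the determination of blocks to the support of the decomposition matrix and then to compute that support in the semisimple regime forced by $p\notin[2,n]$. First I would record what the hypothesis buys us: since $p=0$ or $p>n$, each ${\rm H}_t$ with $t\leq n$ is semisimple, so every partition of such a $t$ is $p$-restricted, $S^\lambda=D^\lambda$, and (as observed after Theorem \ref{bgg}) $\lambda^M=\lambda^T$ for every $\lambda\in\Lambda_n$. Thus $\Lambda_n'=\Lambda_n\setminus\{\emptyset\}$, the reciprocity of Theorem \ref{bgg} becomes $(P_n(\lambda):W_n(\mu))=[W_n(\mu^T):L_n(\lambda^T)]$, and the factorisation of Corollary \ref{cartan} reads $C_{\lambda\nu}=\sum_{\mu}D_{\mu^T\lambda^T}\,D_{\mu\nu}$.

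Next I would reduce the block relation to decomposition numbers. By Corollary \ref{stdinP} all composition factors of a fixed $W_n(\mu)$ lie in one block, and by Theorem \ref{bgg} each $P_n(\lambda)$ has a standard filtration; as $P_n(\lambda)$ is indecomposable, every $W_n(\mu)$ occurring in it lies in the block of $\lambda$. Hence the blocks are the connected components of the graph on $\Lambda_n$ in which $\mu$ is joined to each composition factor of $W_n(\mu)$ and, for each $\lambda$, to each $\mu$ with $(P_n(\lambda):W_n(\mu))\neq 0$; by the displayed reciprocity both kinds of edge are controlled by the numbers $D_{\mu\nu}=[W_n(\mu):L_n(\nu)]$. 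Finally Lemma \ref{Lemma reduce decomposition numbers} collapses every such number to a top-size number $[W_s(\mu):L_s(\nu)]$ with $|\nu|=s$, so it is enough to understand these.

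The heart of the argument is the computation of these top-size numbers. I would restrict $W_s(\mu)=V(s,t)\otimes_{\mathfrak S_t}S^\mu$ to the symmetric group subalgebra $\mathfrak S_s\subset A_s$, obtaining $\mathrm{Ind}_{\mathfrak S_t\times\mathfrak S_{s-t}}^{\mathfrak S_s}(S^\mu\boxtimes B_{s-t})$, where $B_{s-t}$ (with $s-t=2m$) is the span of cup diagrams on the remaining $s-t$ nodes. The periplectic signs turn this into the sign-twisted matching module, so $B_{2m}\cong\bigoplus_\beta S^\beta$ with $\beta$ running over the partitions of $2m$ all of whose columns have even length (for $A_2$ this is $S^{(1,1)}$); each such $\beta$ has empty $2$-core. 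The Littlewood-Richardson rule then computes the total $\mathfrak S_s$-content of $W_s(\mu)$. Since every full-size simple $L_s(\nu)$ restricts to $S^\nu$ on the subalgebra (the composite $\mathfrak S_s\to A_s\to{\rm H}_s$ is the identity) while the smaller simples contribute further Specht factors, I would disentangle the genuine $A_s$-composition factors by induction on $n$: the functors $F_s$ and $G_{s-2}$ identify the part of $W_s(\mu)$ built from factors of size $<s$ with lower standard modules, and subtracting their contribution (via the inductive hypothesis) isolates the full-size multiplicities. The goal is to prove that $[W_s(\mu):L_s(\nu)]\neq 0$ forces $\mu$ and $\nu$ to share a $2$-core, and that enough of these numbers, together with the projective edges coming from the reciprocity above, are nonzero to connect any two partitions with a common $2$-core; feeding this into the graph of the previous paragraph then identifies the blocks with the fibres of the $2$-core map.

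The main obstacle is exactly this disentangling. The localisation functor $F_s$ annihilates precisely the top-size simples, so the inductive machinery cannot detect them directly, and the naive restriction to $\mathfrak S_s$ over-counts: a Specht factor $S^\nu$ appearing in $W_s(\mu)|_{\mathfrak S_s}$ may be absorbed into the restriction of a smaller composition factor rather than signalling a genuine full-size factor $L_s(\nu)$ (already for $W_3((1))$ the factor $S^{(2,1)}$, of $2$-core $(2,1)\neq(1)$, must be absorbed in this way). Showing that exactly the same-$2$-core full-size factors survive this cancellation is where the signed structure of $B_{2m}$ and the identity $\lambda^M=\lambda^T$ have to be used decisively.
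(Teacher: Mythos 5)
First, note that this theorem is not proved in the paper at all: it is recalled verbatim from \cite[Theorem 1]{Coulembier}, so there is no internal proof to compare against. The ingredients Coulembier actually uses do appear later in the paper as imported black boxes: the residue-sequence constraint of Lemma \ref{Prop Coulembier} (his Proposition 6.2.6), which is a central-character/Jucys--Murphy type argument forcing equality of $2$-cores whenever a decomposition number is nonzero, and the explicit nonvanishing $[W_n^\mK(\lambda):L_n^\mK(\mu)]=1$ for $\mu$ obtained by adding a horizontal $2$-hook (his Proposition 7.2.6, quoted in Proposition \ref{removing horizontal two hook}), which together with Proposition \ref{transblock} links all partitions with a common $2$-core. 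Your proposal instead tries to compute the decomposition numbers by restricting $W_s(\mu)$ to $\mathfrak{S}_s$ and applying Littlewood--Richardson. This is a genuinely different route, and your reductions (semisimplicity of ${\rm H}_t$ for $t\leq n$, $\lambda^M=\lambda^T$, blocks being governed by the numbers $D_{\mu\nu}$ via Corollary \ref{stdinP}, Theorem \ref{bgg} and Lemma \ref{Lemma reduce decomposition numbers}) are all correct.

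However, there is a genuine gap: the central step is not carried out, and you say so yourself. The Littlewood--Richardson content of $W_s(\mu)|_{\mathfrak{S}_s}$ only bounds the possible constituents, and, as your own example $W_3((1))\supset S^{(2,1)}$ shows, it contains Specht factors whose $2$-core differs from that of $\mu$; these must be shown to be exactly absorbed by the restrictions of lower composition factors, and nothing in the proposal establishes this cancellation. Without it you get neither direction of the theorem: the ``only if'' direction needs the vanishing $[W_s(\mu):L_s(\nu)]=0$ when the $2$-cores differ (this is what the residue-sequence argument of Lemma \ref{Prop Coulembier} delivers and what LR restriction alone cannot), and the ``if'' direction needs an actual nonvanishing statement (the $2$-hook multiplicity of Proposition \ref{removing horizontal two hook}), which you assert as ``enough of these numbers \dots are nonzero'' without proof. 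As written the argument is a plausible programme with its hardest point explicitly left open, not a proof; to complete it along the paper's lines you would replace the LR computation by the eigenvalue argument of Lemma \ref{Prop Coulembier} for vanishing and by an explicit $\Hom$-space computation (or the Lie superalgebra translation argument of \cite{Coulembier}) for the nonvanishing.
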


The aim of this section is to generalise this result to fields of characteristic $p>2$. As described in the Introduction we will fix a $p$-modular system $(\mK, R, \mk)$
with ${\rm char }\; \mk = p>2$. Throughout this section we consider the periplectic Baruer algebra $A_n$ over the field $\mk$.

%\begin{proposition}\label{Prop embedding blocks}
%If there exists an $n \in \mN$ such that $\lambda$ and $\mu$ belong to
%the same $A_n$-block for $\lambda$ and $\mu$ in 
%$\Lambda_n$, then $\lambda$ and $\mu$ belong to the same
%$A_m$-block for all $m\geq n$ with $m-n$ even.
%\end{proposition}
%\begin{proof}
%We know that $G_n(W_n(\lambda)) \cong W_{n+2}(\lambda)$ for all
%$\lambda \in \Lambda_n$ by Lemma \ref{embedding cell modules}. We can then
%conclude that $W_{n+2}(\lambda)$ and $W_{n+2}(\mu)$ belong to the same
%$A_{n+2}$-block if $W_n(\lambda)$ and $W_n (\mu)$ belong to the same
%$A_n$-block. The result now follows by induction.
%\end{proof}

\begin{proposition} \label{same p-core}
Let $\lambda, \mu\in \Lambda_n$. If $|\lambda|=|\mu|$ and $\lambda$ and $\mu$ have the same $p$-core then $\lambda$ and $\mu$ are in the same $A_n$-block.
\end{proposition}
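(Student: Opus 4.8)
The plan is to reduce the statement to the block structure of the symmetric group algebra, which we already understand via Nakayama's conjecture, and then transport that information into $A_n$ using the functor $V(n,t)\otimes_{\mathfrak{S}_t}-$ together with the composition-factor machinery of standard modules. Since $|\lambda|=|\mu|=t$ and $\lambda,\mu$ have the same $p$-core, Nakayama's conjecture tells us that $S^\lambda$ and $S^\mu$ lie in the same ${\rm H}_t$-block. By the definition of a block as an equivalence class, it suffices to connect $\lambda$ and $\mu$ through a chain of partitions of $t$ with the same $p$-core, where consecutive partitions are linked by an indecomposable $A_n$-module having both as composition factors (of the associated simple modules).

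First I would recall that $W_n(\lambda)=V(n,t)\otimes_{\mathfrak{S}_t}S^\lambda$, and that the exact functor $V(n,t)\otimes_{\mathfrak{S}_t}-$ sends ${\rm H}_t$-modules to $A_n$-modules. The key point is that this functor preserves the property of ``belonging to a single block'' in a usable way: if an indecomposable ${\rm H}_t$-module $M$ has both $D^\lambda$ and $D^\mu$ as composition factors, I would argue that $V(n,t)\otimes_{\mathfrak{S}_t}M$, or a suitable indecomposable summand of it, has both $L_n(\lambda)$ and $L_n(\mu)$ among its composition factors. Here I would use exactness to track composition factors: since $V(n,t)\otimes_{\mathfrak{S}_t}D^\nu$ has $L_n(\nu)$ as its head (for $p$-restricted $\nu$ with $\nu\in\Lambda_n'$) and more generally the composition factors of $V(n,t)\otimes_{\mathfrak{S}_t}D^\nu$ are controlled by the decomposition numbers $[W_n(\nu):L_n(\cdot)]$ via the identity in Remark \ref{standardspecht} and Lemma \ref{Lemma reduce decomposition numbers}.

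The cleanest route is probably to observe that two partitions of $t$ with the same $p$-core are in the same ${\rm H}_t$-block, hence $D^\lambda$ and $D^\mu$ (for the $p$-restricted representatives, reached along a Nakayama chain) appear together as composition factors of some indecomposable ${\rm H}_t$-module $M$; applying the exact functor $V(n,t)\otimes_{\mathfrak{S}_t}-$ to $M$ produces an $A_n$-module whose composition factors include $L_n(\lambda)$ and $L_n(\mu)$ (using that $L_n(\nu)=W_n(\nu)/{\rm rad}$ appears in $V(n,t)\otimes_{\mathfrak{S}_t}S^\nu$ and that $S^\nu$, $D^\nu$ share block data). Decomposing this $A_n$-module into indecomposables and invoking the block equivalence relation on $\Lambda_n'$ then places $\lambda$ and $\mu$ in the same $A_n$-block. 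One must be slightly careful when $\lambda$ or $\mu$ is not itself $p$-restricted, but since composition factors of $W_n(\nu)$ always lie in a single block (Corollary \ref{stdinP}) and the Nakayama chain can be chosen through $p$-restricted partitions, this extends the relation correctly.

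The main obstacle I anticipate is the transfer step: verifying that an indecomposable summand of $V(n,t)\otimes_{\mathfrak{S}_t}M$ genuinely witnesses both $L_n(\lambda)$ and $L_n(\mu)$ as composition factors, rather than having the two simples scattered across different indecomposable summands. Exactness of the functor guarantees the full module has both factors, but the block relation requires them to co-occur in a \emph{single indecomposable}. I would address this by working at the level of the block decomposition of $A_n$ directly—showing that the images $L_n(\lambda),L_n(\mu)$ cannot be separated because their projective covers share a standard subquotient—or alternatively by composing several one-step block links along the Nakayama chain, each handled by a single indecomposable ${\rm H}_t$-module with exactly two relevant factors, which transfers more transparently.
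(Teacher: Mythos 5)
Your overall strategy --- push the Nakayama linkage in ${\rm H}_t$ through to $A_n$ --- is the right one and is essentially what the paper does, but the step you yourself flag as ``the main obstacle'' is a genuine gap, and neither of your proposed fixes closes it. The functor $V(n,t)\otimes_{\mathfrak{S}_t}-$ is exact, but exactness only controls the multiset of composition factors of the image, not how they distribute among indecomposable summands; and your fallback of using one-step links via a length-two indecomposable ${\rm H}_t$-module $M$ (a non-split extension involving $D^\lambda$ and $D^\mu$) does not help, because without knowing that $V(n,t)\otimes_{\mathfrak{S}_t}-$ is \emph{full} (or at least reflects split exact sequences) you cannot rule out that $V(n,t)\otimes_{\mathfrak{S}_t}M$ splits as $(V(n,t)\otimes_{\mathfrak{S}_t} D^\lambda)\oplus(V(n,t)\otimes_{\mathfrak{S}_t} D^\mu)$, with the two families of composition factors landing in different blocks. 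Nothing established in the paper asserts fullness of $V(n,t)\otimes_{\mathfrak{S}_t}-$ on all of ${\rm H}_t\text{-mod}$, so as written your argument only shows that $L_n(\lambda)$ and $L_n(\mu)$ both occur in some, possibly decomposable, $A_n$-module.

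The paper closes exactly this gap by routing through functors that are known to be full embeddings, and by working with standard modules rather than simples. By Remark \ref{standardspecht}, $W_t(\lambda)$ and $W_t(\mu)$ (with $t=|\lambda|=|\mu|$) are the dual Specht modules $S^\lambda$, $S^\mu$ inflated along the surjection $A_t\twoheadrightarrow {\rm H}_t$; inflation along a quotient map is a full embedding, so the Nakayama linkage of $S^\lambda$ and $S^\mu$ already places $W_t(\lambda)$ and $W_t(\mu)$ in one $A_t$-block. Then the globalisation functor $G_m$ satisfies $F_{m+2}G_m\cong\mathrm{id}$, hence is fully faithful (equivalently, $A_m\cong\epsilon_{m+2}A_{m+2}\epsilon_{m+2}$ forces each block of $A_m$ to sit inside a single block of $A_{m+2}$), and by Lemma \ref{embedding cell modules} it carries $W_m(\nu)$ to $W_{m+2}(\nu)$. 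Iterating $G_{n-2}\cdots G_{t}$ transports the linkage to $W_n(\lambda)$ and $W_n(\mu)$. If you want to keep your one-functor formulation, you would need to prove in addition that $V(n,t)\otimes_{\mathfrak{S}_t}-$ agrees with this composite of full embeddings (equivalently, that it is itself full); with that supplement your argument becomes the paper's.
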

\begin{proof}
We know that if $\lambda$ and $\mu$ have the same $p$-core, then
$S^\lambda$ and $S^\mu$ belong to the same $\mk
\mathfrak{S}_{|\lambda|}$-block  by Nakayama's conjecture \cite[Theorem
  6.1.21]{JK}. 
 Using Remark \ref{standardspecht} we deduce that $W_{|\lambda|}(\lambda)$ and $W_{|\lambda|}(\mu)$ belong to the same $A_n$-block. Now, by repeated application of the globalisation functors $G_{n-2}\ldots G_{|\lambda|+2} G_{|\lambda|}$ and Lemma \ref{embedding cell modules} we deduce that $W_n(\lambda)$ and $W_n(\mu)$ belong to the same $A_n$-blocks.
\end{proof}

\begin{proposition}\label{transblock}
Let $\lambda$ and $\mu$ be 
in $\Lambda_n$. Then we have that $\lambda$ and $\mu$ are in
the same $A_n$- block if and only if $\lambda^T$ and $\mu^T$ are in
the same $A_n$-block.
\end{proposition}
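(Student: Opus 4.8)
The plan is to realise the block relation $\sim$ on $\Lambda_n$ as the transitive closure of a union of two linkage relations that BGG reciprocity (Theorem~\ref{bgg}) interchanges under transposition; transpose-invariance of the union then forces transpose-invariance of $\sim$.

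Concretely, I would introduce on $\Lambda_n$ a \emph{factor linkage} $P$, declaring $\sigma\mathrel{P}\tau$ whenever $W_n(\sigma)$ and $W_n(\tau)$ share a composition factor, i.e. $[W_n(\sigma):L_n(\nu)]\neq 0\neq[W_n(\tau):L_n(\nu)]$ for some $\nu\in\Lambda_n'$, and a \emph{projective linkage} $Q$, declaring $\sigma\mathrel{Q}\tau$ whenever $(P_n(\lambda):W_n(\sigma))\neq 0\neq(P_n(\lambda):W_n(\tau))$ for some $\lambda\in\Lambda_n'$. First I would verify that $\sim$ is exactly the transitive closure of $P\cup Q$. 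That every $P$- or $Q$-link stays within a single block is immediate: a $P$-link places $\sigma,\tau$ in the block of their shared factor by Corollary~\ref{stdinP}, and a $Q$-link places them in the block of the indecomposable projective $P_n(\lambda)$, all of whose standard sections lie in one block by Proposition~\ref{standardproperties}(2). For the converse I would use the standard description of blocks as connected components under ``composition factors of a common indecomposable projective'': given such $L_n(\nu),L_n(\rho)$ inside $P_n(\lambda)$, I would choose standard sections $W_n(\mu),W_n(\mu')$ of $P_n(\lambda)$ containing them and chain $\nu\mathrel{P}\mu\mathrel{Q}\mu'\mathrel{P}\rho$ (using that $L_n(\nu)$ is the head of $W_n(\nu)$), while a general $\sigma\in\Lambda_n$ is $P$-linked to any $\nu\in\Lambda_n'$ with $L_n(\nu)$ a factor of $W_n(\sigma)$.

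The crucial step is to identify $Q$ with the transpose of $P$. By Theorem~\ref{bgg} we have $(P_n(\lambda):W_n(\sigma))\neq 0$ if and only if $[W_n(\sigma^T):L_n(\lambda^M)]\neq 0$, so $\sigma\mathrel{Q}\tau$ holds exactly when $[W_n(\sigma^T):L_n(\lambda^M)]\neq 0\neq[W_n(\tau^T):L_n(\lambda^M)]$ for some $\lambda\in\Lambda_n'$. Since Mullineux conjugation $\nu\mapsto\nu^M$ is a size-preserving involution on $p$-restricted partitions fixing $\varnothing$, it permutes $\Lambda_n'$; setting $\nu=\lambda^M$ then shows $\sigma\mathrel{Q}\tau$ if and only if $\sigma^T\mathrel{P}\tau^T$. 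Thus $Q$ is $P$ conjugated by the (involutive) transpose map.

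It follows that transposition interchanges the $P$-links and the $Q$-links, so $P\cup Q$ is carried to itself by $\sigma\mapsto\sigma^T$; hence its transitive closure $\sim$ is transpose-invariant, giving $\lambda\sim\mu$ if and only if $\lambda^T\sim\mu^T$. I expect the main obstacle to be the first step---the precise claim that $P$ and $Q$ jointly generate $\sim$, and in particular the reverse inclusion, which relies on the standard-filtration structure of the $P_n(\lambda)$ together with the characterisation of blocks via indecomposable projectives. Once this bookkeeping is secured, BGG reciprocity renders the transpose-symmetry essentially formal.
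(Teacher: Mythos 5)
Your proof is correct, but it takes a genuinely different route from the paper's. The paper first handles $\lambda,\mu\in\Lambda_n'$ via the contravariant self-equivalence $\Upsilon$: since $\Upsilon(L_n(\lambda))\cong L_n(\lambda^M)$ (Proposition \ref{upLis}), the block relation is preserved by $-^M$, and one then checks separately that $\nu^M$ and $\nu^T$ always lie in the same block (using $S^\nu\otimes\mathrm{sgn}\cong(S^{\nu^T})^*$, $D^\nu\otimes\mathrm{sgn}\cong D^{\nu^M}$ and Corollary \ref{stdinP}); general $\lambda,\mu\in\Lambda_n$ are then reduced to the restricted case by replacing them with $p$-restricted partitions of the same size and $p$-core, which invokes Proposition \ref{same p-core} (hence Nakayama's conjecture and the globalisation functors). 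You instead read the transpose symmetry directly off Theorem \ref{bgg}: since $-^M$ is a size-preserving involution permuting $\Lambda_n'$, the identity $(P_n(\lambda):W_n(\sigma))=[W_n(\sigma^T):L_n(\lambda^M)]$ exhibits your projective linkage $Q$ as the transpose-conjugate of your factor linkage $P$, and the block relation is the equivalence generated by $P\cup Q$. The two arguments share their deep input ($\Upsilon$ and the Mullineux twist enter Theorem \ref{bgg} through Lemma \ref{lemmaproj}), but yours is more formal: it bypasses Proposition \ref{same p-core} entirely, treats restricted and unrestricted labels uniformly, and would apply verbatim to any standardly based algebra admitting a BGG reciprocity twisted by a pair of bijections of the label set. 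The ingredients you defer to standard theory are indeed standard: the block relation of Section \ref{sba} (defined via arbitrary indecomposable modules) coincides with the equivalence generated by co-occurrence of composition factors in an indecomposable projective, and your chains $\nu\,P\,\mu\,Q\,\mu'\,P\,\rho$ --- using that $L_n(\nu)$ is the head of $W_n(\nu)$ and that every composition factor of $P_n(\lambda)$ occurs in some standard section --- correctly show that $P$ and $Q$ generate it, with the extension to arbitrary $\sigma\in\Lambda_n$ handled by a single $P$-link to a label of a composition factor of $W_n(\sigma)$.
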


\begin{proof}
First suppose that $\lambda$ and $\mu$ are in $\Lambda'_n$.
The simple modules $L_n(\lambda)$ and $L_n(\mu)$ are in the same block
if and only if $\Upsilon(L_n(\lambda))$ and $\Upsilon(L_n(\mu))$ are
in the same block. But we have $\Upsilon(L_n(\lambda))\cong
L_n(\lambda^M)$ and $\Upsilon(L_n(\mu)) \cong L_n(\mu^M)$ by
Proposition \ref{upLis}.  Now for any $p$-restricted partition $\nu$
we have that $\nu^T$ and $\nu^M$ belong to the same block. This
follows from the fact that $S^\nu \otimes {\rm sgn} \cong
(S^{\nu^T})^*$ and $D^\nu \otimes {\rm sgn} \cong D^{\nu^M}$, together
with Corollary \ref{stdinP} and the fact that duality fixes simple
modules for the symmetric 
group.  Thus we can conclude that $\lambda$ and $\mu$ belong to the
same block if and only if $\lambda^T$ and $\mu^T$ belong to the same
block.

Now suppose that $\lambda$ and $\mu$ are general. Choose $\lambda'$
and $\mu'$ in $\Lambda_n'$ such that $\lambda'$ has the same $p$-core as
$\lambda$ (respectively $\mu'$ has the same $p$-core as $\mu$), and
$|\lambda'|=|\lambda|$ (respectively $|\mu'|=|\mu|$). These must exist
by considering composition factors of the associated Specht modules
and using Proposition \ref{same p-core}. Further we have that
$(\lambda')^T$ has the same $p$-core as $\lambda^T$ and $(\mu')^T$ has
the same $p$-core as $\mu^T$. Therefore $\lambda$ is in the same block
as $\lambda'$ and $\lambda^T$ is in the same block as $(\lambda')^T$
(and similarly for $\mu$) by Proposition \ref{same p-core}. The result
now follows from the first case considered above.
\end{proof}

\begin{proposition}\label{removing horizontal two hook}
If $\lambda$ can be obtained from $\mu\in \Lambda_n$ by
removing two boxes in the same row (respectively column), then $\lambda$
and $\mu$ belong to the same $A_n$- block.
\end{proposition}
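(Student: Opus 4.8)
The plan is to use BGG reciprocity (Theorem~\ref{bgg}) to exhibit a single projective indecomposable in whose standard filtration both $W_n(\lambda)$ and $W_n(\mu)$ occur; since an indecomposable projective lies in a single block, this forces $\lambda$ and $\mu$ into the same block. Before invoking it I would make two reductions. First, by Proposition~\ref{transblock} it suffices to treat the case in which the two removed boxes lie in the same \emph{row} (so that $\mu$ is obtained from $\lambda$ by adding a horizontal domino); the same-column case follows by transposing. Second, writing $m=\abs{\mu}$, it suffices to prove the statement in $A_m$ and then transport it up to $A_n$ by the globalisation functors, exactly as in the proof of Proposition~\ref{same p-core}. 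From now on $\mu\vdash m$, $\lambda\vdash m-2$, and $\mu$ is obtained from $\lambda$ by adding two boxes in one row, so that $\mu^T$ is obtained from $\lambda^T$ by adding two boxes in one column.

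For the common projective, fix $\nu\in\Lambda_m'$ and apply Theorem~\ref{bgg} to get
\[(P_m(\nu):W_m(\mu))=[W_m(\mu^T):L_m(\nu^M)],\qquad (P_m(\nu):W_m(\lambda))=[W_m(\lambda^T):L_m(\nu^M)].\]
Setting $\rho=\nu^M$, which ranges over all of $\Lambda_m'$ as $\nu$ does since the Mullineux conjugate is an involution, the task reduces to producing a $p$-restricted $\rho\vdash m$ with $[W_m(\mu^T):L_m(\rho)]\neq 0$ and $[W_m(\lambda^T):L_m(\rho)]\neq 0$. By Remark~\ref{standardspecht} we have $W_m(\mu^T)=S^{\mu^T}$ inflated to $A_m$, so the first multiplicity is just the symmetric-group decomposition number $[S^{\mu^T}:D^\rho]$; because caps act as zero on an inflated module, the $\rho$ for which it is nonzero are exactly the (necessarily full-size) composition factors of $S^{\mu^T}$.

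For the second multiplicity I would analyse $W_m(\lambda^T)=V(m,m-2)\otimes_{\mathfrak{S}_{m-2}}S^{\lambda^T}$ as a module for $\mathfrak{S}_m\subset A_m$. Since $V(m,m-2)$ carries a single cup, whose two endpoints transform under the sign character of the $\mathfrak{S}_2$ permuting them (this is the periplectic sign, and is precisely what produces the transpose in Theorem~\ref{bgg}), one expects $W_m(\lambda^T)\big|_{\mathfrak{S}_m}\cong \mathrm{Ind}_{\mathfrak{S}_{m-2}\times\mathfrak{S}_2}^{\mathfrak{S}_m}(S^{\lambda^T}\boxtimes\mathrm{sgn})$. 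By Pieri's rule this has a (dual) Specht filtration whose sections $S^\sigma$ run over partitions $\sigma$ obtained from $\lambda^T$ by adding a vertical strip of size two; in particular $S^{\mu^T}$ occurs, since $\mu^T/\lambda^T$ is a vertical domino. I would then choose $\rho$ to be a composition factor $D^\rho$ of $S^{\mu^T}$ with $\rho$ maximal in the dominance order among all such: this gives the first nonvanishing, and $D^\rho$ certainly occurs in $W_m(\lambda^T)|_{\mathfrak{S}_m}$. A triangularity argument then upgrades this to $[W_m(\lambda^T):L_m(\rho)]\neq 0$, so that both $W_m(\mu)$ and $W_m(\lambda)$ appear in the standard filtration of the indecomposable $P_m(\rho^M)$, whence $\lambda$ and $\mu$ lie in the same block.

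The main obstacle is this last symmetric-group step. One must pin down the correct sign character on the cup so that one genuinely obtains \emph{vertical} strips (matching $\mu^T/\lambda^T$ rather than $\mu/\lambda$), and then pass from the $\mathfrak{S}_m$-level statement that $D^\rho$ occurs to the $A_m$-level statement that $L_m(\rho)$ occurs in $W_m(\lambda^T)$, controlling the cancellation that could come from the lower standard layers. Choosing $\rho$ dominance-maximal and exploiting that every $A_m$-composition factor $L_m(\sigma)$ of $W_m(\lambda^T)$ with $\abs{\sigma}<m$ restricts to $\mathfrak{S}_m$-factors strictly below $\rho$ in dominance is what should make this work; the remainder of the argument is formal, resting only on Theorem~\ref{bgg}, Remark~\ref{standardspecht}, Proposition~\ref{same p-core} and Proposition~\ref{transblock}.
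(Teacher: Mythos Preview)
Your route is genuinely different from the paper's, and the obstacle you flag is a real gap, not a technicality.

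The paper's argument is much shorter and avoids BGG reciprocity and the symmetric-group analysis entirely. After the same reduction to the row case via Proposition~\ref{transblock}, it simply imports Coulembier's characteristic-zero computation $[W_n^{\mK}(\lambda):L_n^{\mK}(\mu)]=1$ (\cite[Proposition~7.2.6]{Coulembier}), which yields a nonzero $\Hom$ between a standard module and a quotient of another over~$\mK$, and then applies the reduction-mod-$p$ lemma \cite[Lemma~4.1]{King} to obtain a nonzero $\Hom$ over~$\mk$. That is the whole proof.

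Your triangularity step, by contrast, does not go through as stated. Knowing that $D^\rho$ occurs in $W_m(\lambda^T)|_{\mathfrak{S}_m}$ via the $S^{\mu^T}$ section of the Specht filtration does not imply that $L_m(\rho)$ is an $A_m$-composition factor of $W_m(\lambda^T)$. By Lemma~\ref{Lemma reduce decomposition numbers} the $A_m$-composition factors of $W_m(\lambda^T)$ at level $m-2$ are precisely the $L_m(\sigma)$ for $\sigma$ a factor of $S^{\lambda^T}$, and each such $L_m(\sigma)$ is a quotient of $W_m(\sigma)$, whose restriction to $\mathfrak{S}_m$ again carries a Specht filtration by $S^\tau$ with $\tau/\sigma$ a vertical $2$-strip. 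Taking $\sigma=\lambda^T$ already gives $\tau=\mu^T$ among these sections, so the very copy of $D^\rho$ you are tracking could be absorbed entirely by $L_m(\lambda^T)|_{\mathfrak{S}_m}$ rather than by a top-layer factor $L_m(\rho)$. Maximality of $\rho$ among composition factors of $S^{\mu^T}$ gives no control over its occurrence in $L_m(\sigma)|_{\mathfrak{S}_m}$ for $\sigma\vdash m-2$. What you actually need is the nonvanishing of a characteristic-$p$ decomposition number $[W_m(\lambda^T):L_m(\rho)]$ for some $\rho\vdash m$ in the block of $\mu^T$, and that is of the same order of difficulty as the statement you are trying to prove; the paper sidesteps it by importing the answer from characteristic zero.
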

\begin{proof}
By Proposition \ref{transblock} it is enough to consider the case when
$\lambda$ is obtained from $\mu$ by removing two boxes in the same
row.  For a field $\mK$ of characteristic zero, we have
\cite[Proposition 7.2.6] {Coulembier}
\[
[W_n^\mK(\lambda) : L_n^\mK(\mu)]=1.
\]
This implies that there exists a submodule $M$ of $W_n^\mK(\lambda)$ such that 
\[
\Hom_{A_n^\mK}(W_n^\mK(\mu), W_n^\mK(\lambda)/M)\not=0. 
\]
By \cite[Lemma 4.1]{King}, we can reduce this modulo $p$ to obtain 
\[
\Hom_{A_n^\mk}(W_n^\mk(\mu), W_n^\mk(\lambda)/\overline{M})\not=0, 
\]
where $\overline{M}$ is a submodule of $W_n^\mk(\lambda)$.
In particular $\mu$ and $\lambda$ belong to the same block. 
\end{proof}

We say that $\ft= (\ft^{(1)},\ft^{(2)},\dots,\ft^{(n)})$ is a \emph{path of
partitions} if each $\ft^{(i)}$ is a partition such that
$\ft^{(i+1)}$ is obtained from $\ft^{(i)}$ by adding or removing one
box in the Young diagram and $\ft^{(1)}=(\ytableausetup{mathmode,
  boxsize=0.6em} \ydiagram{1})$. We denote the set of all paths of
length $n$ with $\ft^{(n)} = \lambda$ by $St_n(\lambda)$.  We define the
vector $c_\ft=(c_\ft(2),c_\ft(3),\dots, c_\ft(n)) \in \mk^{n-1}$ for
$\ft \in St_n(\lambda)$ as follows
\[
c_\ft(i) = \begin{cases} \residue(b) &\text{ if } \ft^{(i)}= \ft^{(i-1)}
  \cup b \\ \residue(b)+1 &\text{ if } \ft^{(i)}\cup b=
  \ft^{(i-1)},  \end{cases} 
\]
where $b$ is the box added to or removed from $\ft^{(i-1)}$ to obtain $\ft^{(i)}$.

We will need the following pair of lemmas.

\begin{lemma}[{\cite[Proposition 6.2.6]{Coulembier}}] \label{Prop Coulembier}
For $\lambda \in \Lambda_n$ and $\mu \in \Lambda_n'$, if $[W_n(\lambda):
  L_n(\mu)] \not=0$ then there exist $\mathfrak{t}\in St_n(\lambda)$
and $\mathfrak{s} \in St_n(\mu)$ such that $c_\ft=c_\mathfrak{s}$.
\end{lemma}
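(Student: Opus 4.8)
The plan is to detect composition factors through a commutative family of ``Jucys--Murphy'' central elements of the tower $A_1\subset A_2\subset\cdots\subset A_n$, whose joint generalised eigenvalues on standard modules are governed by the content vectors $c_\ft$. For each $i$ I would construct a central element $z_i\in Z(A_i)$, the periplectic analogue of the sum of the Jucys--Murphy elements, built from the generators $I,X,\cap,\cup$. The elements $z_1,\dots,z_n$ commute pairwise: for $i<j$ we have $z_i\in A_i\subseteq A_j$ while $z_j\in Z(A_j)$, so $z_j$ commutes with $z_i$. They therefore generate a commutative subalgebra $\cZ\subseteq A_n$. Crucially, $\cZ$ need not be central in $A_n$; but since $\cZ\subseteq A_n$, every $A_n$-submodule is automatically $\cZ$-stable, and hence the decomposition of any finite-dimensional $A_n$-module $M$ into generalised $\cZ$-eigenspaces $M=\bigoplus_\chi M_\chi$ (over characters $\chi$ of $\cZ$, after extending scalars to a splitting field) is compatible with submodules and quotients. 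Consequently the character support $\mathrm{supp}_\cZ(M)=\{\chi: M_\chi\neq 0\}$ is additive on short exact sequences, so $\mathrm{supp}_\cZ$ of any composition factor of $M$ is contained in $\mathrm{supp}_\cZ(M)$.

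The key computation is to identify these characters on standard modules. I would first establish the one-box branching rule: restriction along $A_n\to A_{n-1}$ gives each $W_n(\nu)$ a filtration with sections $W_{n-1}(\nu')$ over all $\nu'$ obtained from $\nu$ by adding or removing a single box, exactly as in the Brauer case. Since $z_i\in Z(A_i)$ is central and $W_i(\nu)$ lies in a single block (Corollary \ref{stdinP}), $z_i$ acts on $W_i(\nu)$ with a single generalised eigenvalue $\zeta_i(\nu)$; iterating the branching down to $A_1$ then refines $W_n(\lambda)$ into layers indexed by paths $\ft\in St_n(\lambda)$, and on the layer of $\ft$ the element $z_i$ acts with generalised eigenvalue $\zeta_i(\ft^{(i)})$. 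Thus
$$\mathrm{supp}_\cZ\big(W_n(\lambda)\big)=\big\{\,(\zeta_1(\ft^{(1)}),\dots,\zeta_n(\ft^{(n)})) : \ft\in St_n(\lambda)\,\big\}.$$
The heart of the matter is then the scalar computation showing that consecutive central characters differ exactly by the entries of $c_\ft$, namely
$$\zeta_i(\ft^{(i)})-\zeta_{i-1}(\ft^{(i-1)})=c_\ft(i),$$
that is $\residue(b)$ when the box $b$ is added at step $i$ and $\residue(b)+1$ when $b$ is removed. The asymmetric $+1$ on removed boxes is forced by the periplectic sign conventions of \eqref{Composition Brauer diagrams} and Definition \ref{antiauto}, and is what distinguishes this from the Brauer-algebra computation.

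With these inputs the lemma follows quickly. Since $L_n(\mu)=W_n(\mu)/{\rm rad}\,W_n(\mu)$ is a quotient of $W_n(\mu)$, additivity gives $\mathrm{supp}_\cZ(L_n(\mu))\subseteq\mathrm{supp}_\cZ(W_n(\mu))$; and if $[W_n(\lambda):L_n(\mu)]\neq 0$ then $L_n(\mu)$ is a composition factor of $W_n(\lambda)$, so $\mathrm{supp}_\cZ(L_n(\mu))\subseteq\mathrm{supp}_\cZ(W_n(\lambda))$. As $L_n(\mu)\neq 0$ its support is non-empty; choosing a character $\chi$ in it, the first containment together with the branching description produces $\mathfrak{s}\in St_n(\mu)$ with $\chi=(\zeta_i(\mathfrak{s}^{(i)}))_i$, while the second produces $\ft\in St_n(\lambda)$ with $\chi=(\zeta_i(\ft^{(i)}))_i$. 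Hence $\zeta_i(\ft^{(i)})=\zeta_i(\mathfrak{s}^{(i)})$ for all $i$, and telescoping with the displayed increment formula (using also $\ft^{(1)}=\mathfrak{s}^{(1)}$) yields $c_\ft=c_\mathfrak{s}$, as required.

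The main obstacle is the second paragraph: constructing the central elements $z_i$ explicitly and computing the increment formula with its precise eigenvalues, above all the $+1$ shift on removed boxes. This requires a careful diagrammatic analysis of how the cup and cap generators interact, under the signed multiplication \eqref{Composition Brauer diagrams}, with the symmetric-group Jucys--Murphy elements sitting inside the Murphy basis of Theorem \ref{stdbasis}; it is precisely here that the signs encoded in $\phi$ and the failure of cellularity make the periplectic case genuinely more delicate than the classical Brauer one.
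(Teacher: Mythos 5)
First, a point of reference: the paper does not prove this lemma at all --- it is imported verbatim from \cite[Proposition 6.2.6]{Coulembier} --- so the comparison is really with Coulembier's argument. Your overall strategy (a Gelfand--Tsetlin-style analysis of Jucys--Murphy-type operators acting compatibly with the one-box branching filtration of standard modules, followed by the support argument for composition factors) is the right family of ideas, and your first and third paragraphs are formally sound: supports of a commutative family are additive on short exact sequences, $L_n(\mu)$ is a quotient of $W_n(\mu)$ and a subquotient of $W_n(\lambda)$, and matching characters produce the required pair of paths.

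The problem is the second paragraph, which is not merely deferred but internally inconsistent as stated. If $z_i$ were central in $A_i$, then (exactly as you argue) it would act with a single generalised eigenvalue $\zeta_i(\nu)$ on all of $W_i(\nu)$, a quantity depending only on $i$ and $\nu$; telescoping your increment formula $\zeta_i(\ft^{(i)})-\zeta_{i-1}(\ft^{(i-1)})=c_\ft(i)$ would then force $\sum_{i\geq 2} c_\ft(i)$ to depend only on the endpoint $\ft^{(n)}$ and on $n$. It does not: for the three paths in $St_3((1))$, namely $(1)\to(2)\to(1)$, $(1)\to(1,1)\to(1)$ and $(1)\to\varnothing\to(1)$, the increment pairs are $(1,2)$, $(-1,0)$ and $(1,0)$, with sums $3$, $-1$ and $1$, which are not all equal in any odd characteristic. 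Hence no family of central elements can satisfy your eigenvalue law, and indeed one should not expect one: $\mathfrak{p}(n)$ has no even invariant form and no quadratic Casimir, so the periplectic tower has essentially no useful centre --- this is precisely one of the ``striking differences'' from the Brauer case, where the sum of the Jucys--Murphy elements \emph{is} central. The repair, and the route actually taken in the cited reference, is to use the individual Jucys--Murphy-type elements $y_i$ (built from the fake Casimir), relying only on the fact that $y_i$ commutes with the subalgebra $A_{i-1}$, not on centrality of partial sums: then $y_i$ preserves the branching filtration at level $i-1$ and acts on the layer where $\ft^{(i)}$ is obtained from $\ft^{(i-1)}$ by adding (respectively removing) a box $b$ with generalised eigenvalue $\residue(b)$ (respectively $\residue(b)+1$), so the joint spectrum of $y_1,\dots,y_n$ on $W_n(\lambda)$ lies in $\{c_\ft : \ft\in St_n(\lambda)\}$ with no telescoping. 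With that substitution your concluding argument goes through, but the construction of the $y_i$ and the computation of these eigenvalues --- which you leave entirely open --- constitute the whole content of the result.
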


\begin{lemma}\label{Lemma p-core}
Assume $\lambda \in \Lambda_n$ has as $2$-core an $r$-staircase with 
$2r-1<p$, and that
$$\frac{r(r+1)}{2}+p-2r >n.$$ Then $\lambda$ is a
$p$-core and every partition $\mu$ obtained by adding a box to $\lambda$ is
still a $p$-core. Further, two boxes in $\mu$ (or in $\lambda$) have
the same residue if 
and only if they have the same content.
\end{lemma}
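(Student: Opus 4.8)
The plan is to reduce all three assertions to a single inequality, namely that the largest hook length of $\lambda$ (and of $\mu$) is strictly less than $p$. Writing $\ell(\lambda)$ for the number of nonzero parts of $\lambda$, recall that the largest hook length of any partition is that of the box $(1,1)$, equal to $\lambda_1 + \ell(\lambda) - 1$. Once I know $\lambda_1 + \ell(\lambda) - 1 < p$, every hook length of $\lambda$ is a positive integer $< p$, so $\lambda$ has no box of hook length $p$, hence no removable rim $p$-hook, and is therefore equal to its own $p$-core; the same argument applied to $\mu$ settles the first two claims. The residue/content claim also follows, since the contents occurring in $\lambda$ (resp. $\mu$) range over an interval of length $\lambda_1 + \ell(\lambda) - 2$ (resp. $\mu_1 + \ell(\mu) - 2$), so once this is $< p$ any two boxes whose contents are congruent modulo $p$ must in fact have equal content.

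First I would establish the bound on $\lambda_1 + \ell(\lambda)$. Since the $2$-core is produced by successively deleting rim $2$-hooks from the boundary, the $r$-staircase $\kappa = (r, r-1, \ldots, 1)$ is a genuine subdiagram of $\lambda$; in particular $\lambda_1 \geq r$ and $\ell(\lambda) \geq r$. The boxes of $\lambda$ lying outside $\kappa$ number $|\lambda| - \tfrac{r(r+1)}{2}$, and among them the $\lambda_1 - r$ boxes of row $1$ beyond column $r$ are disjoint from the $\ell(\lambda) - r$ boxes of column $1$ below row $r$. Hence $(\lambda_1 - r) + (\ell(\lambda) - r) \leq |\lambda| - \tfrac{r(r+1)}{2}$. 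Using $|\lambda| \leq n$ (as $\lambda \in \Lambda_n$) together with the hypothesis $\tfrac{r(r+1)}{2} + p - 2r > n$, this gives $\lambda_1 + \ell(\lambda) - 2r < p - 2r$, that is $\lambda_1 + \ell(\lambda) \leq p - 1$. (In particular this forces $p > 2r$, which recovers the standing assumption $2r - 1 < p$.)

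For $\mu$ I may assume $\lambda \neq \varnothing$, the case $\lambda = \varnothing$ being immediate. Adding a single box raises at most one of $\mu_1, \ell(\mu)$ above its value for $\lambda$, and only by one, so $\mu_1 + \ell(\mu) \leq \lambda_1 + \ell(\lambda) + 1 \leq p$. Feeding the two bounds into the reductions of the first paragraph then completes the proof: the maximal hook length of $\lambda$ is $\leq p - 2$ and that of $\mu$ is $\leq p - 1$, both $< p$, yielding the $p$-core statements; and the content ranges are $\lambda_1 + \ell(\lambda) - 2 \leq p - 3$ and $\mu_1 + \ell(\mu) - 2 \leq p - 2$ respectively, both $< p$, yielding the equivalence of residue and content.

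The argument is elementary box-counting, so there is no serious obstacle; the only points requiring care are the two standard facts I invoke — that the $p$-core is literally a subdiagram of $\lambda$ (justifying the containment of the staircase), and that a removable rim $p$-hook exists precisely when some box has hook length $p$ — together with verifying that the disjointness estimate $(\lambda_1 - r) + (\ell(\lambda) - r) \leq |\lambda| - \tfrac{r(r+1)}{2}$ remains valid in the degenerate cases $r = 0$ and $\lambda = \varnothing$, where it holds vacuously.
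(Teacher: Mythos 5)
Your argument is, in substance, the same as the paper's. The quantity you work with, the maximal hook length $\lambda_1+\ell(\lambda)-1$, is exactly the length of the rim of $\lambda$, and your disjointness estimate $(\lambda_1-r)+(\ell(\lambda)-r)\leq|\lambda|-\tfrac{r(r+1)}{2}$ is a repackaging of the paper's observation that each box added to the $r$-staircase (whose rim has length $2r-1$) increases the rim length by at most one; both proofs then conclude that the rim of $\lambda$ has at most $p-2$ boxes and that of $\mu$ at most $p-1$, so no rim $p$-hook fits and residues determine contents. For $r\geq 1$ your counting is correct and complete, and the two standard facts you invoke (that the $p$-core is a subdiagram, and that a removable rim $p$-hook exists iff some hook length equals $p$) are indeed standard.

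The one real problem is your closing parenthetical: the disjointness estimate does \emph{not} hold vacuously when $r=0$ and $\lambda\neq\varnothing$. There the two sets of boxes both contain $(1,1)$, so one only gets $\lambda_1+\ell(\lambda)-1\leq|\lambda|$, one worse than you claim, and hence only $\mu_1+\ell(\mu)-1\leq p$, which does not exclude a rim $p$-hook in $\mu$. This is not merely a gap in the write-up: for $p=3$ and $n=2$ take $\lambda=(2)$, whose $2$-core is the $0$-staircase and which satisfies both hypotheses ($-1<3$ and $0+3-0>2$); then $\mu=(3)$ is itself a rim $3$-hook, hence not a $3$-core, and the second assertion of the lemma fails. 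In fairness, the paper's own proof has exactly the same blind spot (its bound ``rim length $\leq 2r-1+(p-2r-1)$'' also breaks for the empty staircase, whose rim has length $0$ rather than $2\cdot 0-1=-1$), and the cases $r\leq 1$ of the hypotheses force $p>n$, while the final block decomposition only records the staircase blocks $B_n(\rho_r)$ for $r\geq 2$, so nothing downstream is damaged. But as a proof of the lemma as stated you should either restrict to $r\geq 1$ or correct the $r=0$ count rather than dismiss it.
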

\begin{proof}
The degree of an $r$-staircase is $\frac{r(r+1)}{2}$ and the length of its rim is $2r-1$. The partition
$\lambda$ is obtained by adding at most $p-2r-1$ boxes to the
$r$-staircase since $\frac{r(r+1)}{2}+p-2r >n$. Therefore the length
of the rim of $\lambda$ is at most $2r-1+p-2r-1 = p-2$.  In particular
$\lambda$ is a $p$-core and this is still true for the partition $\mu$
since the length of the rim will still be smaller than $p$. As the
possible contents occuring in $\mu$ all occur in the rim, and these
boxes all have different residues, the residues in $\mu$ for boxes with
differing contents must be distinct.
\end{proof}

\begin{proposition}\label{prop decomposition numbers non-zero}
Consider $\mu \in \Lambda_n'$ such that the $2$-core of $\mu$ is given
by an r-staircase with $2r-1<p$ and $\frac{r(r+1)}{2}+p-2r >n$. Then
for all $\lambda \in \Lambda_n$ we have that 
\[
[W_n(\lambda): L_n(\mu)] \not=0
\]
implies $\lambda \subseteq \mu$ and $\lambda$ and $\mu$ have the same $2$-core, 
and for all $\nu \in \Lambda_n'$, 
\[
[W_n(\mu): L_n(\nu)]\not=0
\]
implies $\mu \subseteq \nu$ and $\mu$ and $\nu$ have the same $2$-core.
\end{proposition}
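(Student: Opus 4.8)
The plan is to run both halves through the combinatorial necessary condition of Lemma~\ref{Prop Coulembier}, using Lemma~\ref{Lemma p-core} to upgrade residues to honest contents. First I would set up the reductions. If $[W_n(\lambda):L_n(\mu)]\neq 0$ then Lemma~\ref{Lemma reduce decomposition numbers} forces $\abs{\lambda}\le\abs{\mu}$ (and symmetrically $\abs{\mu}\le\abs{\nu}$ in the second statement), and via the same lemma lets me pass to the level equal to the size of the simple's label, namely $\abs{\mu}$ (which is the nice core itself) for the first statement and $\abs{\nu}$ for the second. In the equal-size case $\abs{\lambda}=\abs{\mu}$ (resp. $\abs{\mu}=\abs{\nu}$), Remark~\ref{standardspecht} identifies the relevant standard module with a dual Specht module, and since $\mu$ is a $p$-core we have $S^\mu=D^\mu$ simple; hence the decomposition number can only be nonzero for $\lambda=\mu$ (resp. $\nu=\mu$), and the conclusion is immediate. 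So it remains to treat the strict cases $\abs{\lambda}<\abs{\mu}$ and $\abs{\mu}<\abs{\nu}$.

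For the strict case of the first statement I would apply Lemma~\ref{Prop Coulembier} to obtain $\ft\in St_{\abs{\mu}}(\lambda)$ and $\fs\in St_{\abs{\mu}}(\mu)$ with $c_\ft=c_\fs$. Since $\mu$ has full size, the path $\fs$ consists solely of box additions, so it is an ordinary standard tableau, and by Lemma~\ref{Lemma p-core} every recorded value $c_\fs(i)=\residue(b)=\content(b)$ lies in a fixed interval $[a,b]$ of length $<p$; thus the entries of $c_\fs$ (hence of $c_\ft$) lie in the run $\{\residue(c):a\le c\le b\}$, which omits $\residue(a-1)$ and $\residue(b+1)$. The key step is to confine the oscillating path $\ft$ to this same window: the largest content present can only increase, by one, through an addition at the end of the first row, and such an addition would record $\residue(b+1)$, outside the run---a contradiction; the same applies to the smallest content. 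Hence $\lambda$ is itself a $p$-core inside $[a,b]$ with residue equal to content throughout, and $c_\ft=c_\fs$ becomes an honest identity of content generating functions. Comparing coefficients shows $\mu$ has at least as many boxes of each content as $\lambda$, and a short argument (if $\lambda_k>\mu_k$ for minimal $k$, the content $\lambda_k-k$ is strictly overrepresented in $\lambda$) upgrades this to $\lambda\subseteq\mu$.

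The genuinely hard point is the equality of $2$-cores. The content \emph{multiset} is not enough: one can have $\lambda\subseteq\mu$ with matching content generating functions (their difference divisible by $1+z$ with nonnegative quotient) yet different $2$-cores, as already happens for $\mu=(4,3,2,1)$ and $\lambda=(2,2)$. Thus the proof must exploit the full \emph{ordered} equality $c_\ft=c_\fs$, not merely its underlying multiset. I would argue by tracking the two synchronised walks step by step: at each index $\ft$ either repeats the addition performed by $\fs$ or performs a removal of content one less, and I would show that each such removal, paired with its companion addition, assembles $\mu$ from $\lambda$ through a sequence of rim $2$-hook (domino) moves, which preserve the $2$-core. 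Making this pairing precise---equivalently, proving that an ordered match cannot exist when the $2$-cores differ---is the main obstacle, and it is exactly here that the order of the content vector, rather than just its multiset, must be used.

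The second statement is handled by the same path analysis with the roles of the two partitions reversed, $\fs$ now being the full-size standard tableau of $\nu$ and $\ft$ the oscillating path into the nice core $\mu$. The extra subtlety is that the window of Lemma~\ref{Lemma p-core} now controls the \emph{oscillating} side, so it does not a priori bound the excursions of the all-additions walk into $\nu$; consequently the clean confinement step of the first statement is unavailable, and one must instead combine the ordered matching with the size inequality $\abs{\mu}<\abs{\nu}$ to force $\mu\subseteq\nu$ and the equality of $2$-cores. I expect this to be the most delicate part of the argument, and I would treat it last, after the machinery for the first statement is in place.
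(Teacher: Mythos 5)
Your skeleton is the same as the paper's (reduce via Lemma~\ref{Lemma reduce decomposition numbers}, feed $[W:L]\neq 0$ into Lemma~\ref{Prop Coulembier}, use Lemma~\ref{Lemma p-core} to turn residues into contents), and your confinement of the oscillating path $\ft$ to the content window of $\mu$ plus the generating-function argument for $\lambda\subseteq\mu$ is sound. But there are two genuine gaps. First, the step you single out as ``the main obstacle'' --- equality of $2$-cores in the first statement --- is not an obstacle, and your purported counterexample is false: for $\mu=(4,3,2,1)$ and $\lambda=(2,2)$ the difference of content generating functions $\sum_{b\in\mu}z^{\content(b)}-\sum_{b\in\lambda}z^{\content(b)}$ equals $z^{-3}+z^{-2}+z^{-1}+z+z^2+z^3$, which takes the value $-2$ at $z=-1$ and so is \emph{not} divisible by $1+z$. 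In fact $f_\lambda(-1)$, i.e.\ the number of even-content boxes minus the number of odd-content boxes, is unchanged by adding a rim $2$-hook (a domino always contains one box of each content parity) and takes the pairwise distinct values $0,1,-1,2,-2,\dots$ on the staircases, so it is a complete $2$-core invariant. Evaluating your own identity $f_\mu=f_\lambda+(1+z)g$ at $z=-1$ therefore finishes this step immediately; no synchronised-walk or domino-pairing analysis of the \emph{ordered} vector is needed. (The paper instead quotes Coulembier's characteristic-zero argument once residues determine contents.) As written, however, your proof of the $2$-core claim is absent and is justified by an incorrect assertion.

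Second, and more seriously, the second statement is not proved. You correctly diagnose that Lemma~\ref{Lemma p-core} controls the oscillating path $\mathfrak{s}$ into $\mu$ rather than the all-additions path $\ft$ into $\nu$, but you stop there, and the missing idea is exactly the paper's: let $\kappa$ be the partition of \emph{all} boxes ever added along $\mathfrak{s}$. Since $|\mu|<|\nu|=n$, the path $\mathfrak{s}$ has $(n-|\mu|)/2\geq 1$ removals, whence $|\kappa|\leq (n+|\mu|)/2\leq n-1$; combined with the hypothesis $\frac{r(r+1)}{2}+p-2r>n$ this forces $\kappa$ to be the $r$-staircase plus at most $p-2r-2$ boxes, so its rim has length at most $p-3$. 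Every entry of $c_\ft$ must equal $\residue(b)$ or $\residue(b)+1$ for some $b\in\kappa$, which confines $\nu$ ($\nu_1\leq\kappa_1+1$ and $\nu$ has at most one more row than $\kappa$), so the rim of $\nu$ has length less than $p$, residues again determine contents, and the first-statement argument applies. Without this counting step (or a substitute), the second half of the proposition remains open.
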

\begin{proof}

Assume $[W_n(\lambda): L_n(\mu)] \not=0$ for $\lambda \in \Lambda_n$. We may
further assume $n=\abs{\mu}$ by Lemma \ref{Lemma reduce decomposition
  numbers}. From Lemma \ref{Prop Coulembier}, it then follows that
there exists $\ft \in St_n(\lambda)$ and $\mathfrak{s} \in St_n(\mu)$
such that $c_\ft=c_\mathfrak{s}$. Because $\mu \vdash n$, we only add
boxes in $\mathfrak{s}$ and $c_\mathfrak{s}(i)$ is equal to the
residue of the added box.

If $\lambda \not \subseteq \mu$, then there exists a box $b$ in
$\lambda$ which is not contained in $\mu$ but such that $\mu\cup b$ is
a partition. The residue of $b$ appears in $c_\ft$ and thus also in
$c_\mathfrak{s}$. Therefore $\mu$ should also contain a box $b'$ not
in $\lambda$ with the same residue. But from Lemma \ref{Lemma p-core}
it follows that $b$ and $b'$ must have the same content, which is
clearly impossible as they would have to lie on the same diagonal and
$b'$ would then belong to $\lambda$ (since $\lambda$ is a
partition). So we conclude that $\lambda \subseteq \mu$. Moreover,
since the residues determines the contents for $\lambda$ and $\mu$ (by
Lemma \ref{Lemma p-core}) it follows in the same way as in the
characteristic zero situation that $\mu$ and $\lambda$ have the same
$2$-core, see \cite[Corollary 6.2.7 and Lemma 7.3.3]{Coulembier}.

Now assume $[W_n(\mu): L_n(\nu)]\not=0$ for $\nu \in \Lambda_n'$. By
Lemma \ref{Lemma reduce decomposition numbers}, it again suffices to
consider the case $n=\abs{\nu}$. If $\abs{\mu}=\abs{\nu}$ it follows
as in the previous case that the existence of a box in $\nu$
not contained in $\mu$ is impossible. So then $\mu=\nu$.

Consider $\abs{\mu}<\abs{\nu}$. We have $\ft \in St_n(\nu)$ and
$\mathfrak{s} \in St_n(\mu)$ such that $c_\ft=c_\mathfrak{s}$ as
follows from Lemma \ref{Prop Coulembier}. Let $\kappa$ be the
partition containing all the boxes which are added in
$\mathfrak{s}$. In particular $\kappa$ contains $\mu$ and the $2$-core
$(r,r-1,\dots, 2,1)$ of $\mu$. If we would need to add more than
$p-2r-2$ boxes to obtain $\kappa$ from this $2$-core, then $\abs{\nu}
\geq \abs{\kappa}+1> p-2r-2+\frac{r(r+1)}{2}+1\geq n$. This is
impossible since $\nu \vdash n$. Thus we add at most $p-2r-2$ boxes to
the $r$-staircase to obtain $\kappa$ and the length of the rim of
$\kappa$ is smaller than or equal to $2r-1+p-2r-2=p-3$.

We claim that $\nu_1 \leq \kappa_1+1$ and that $\kappa_k=0$ implies
$\nu_{k+1}=0$.  This can be seen as follows. If $\nu_1 > \kappa_1+1$,
then $\nu$ would contain a box in the first row which is two places to
the right of the rightmost box of $\kappa$. The residue of this box
is not equal to $\residue(b)$ or $\residue(b)+1$ for any box $b$ in $\kappa$
because the length of the rim of $\kappa$ is smaller than $p-2$. This
is in contradiction with $c_\ft=c_\mathfrak{s}$.  Similarly there can
be no box of $\nu$ two rows under the last non-zero row of $\kappa$.
These conditions on $\nu$ mean that the length of the rim of $\nu$ is
smaller than $p$. Hence the contents of $\nu$ are determined by the
residues. Then $\mu\subseteq \nu$ again implies that they have the
same $2$-core and $\mu \not \subseteq \nu$ is impossible because it
would mean that $\mu$ contains a box with residue not occurring in
$\nu$.
\end{proof}

For each $\lambda\in \Lambda_n$, denote by $\Lambda_n(\lambda)$ the subset of partitions of $\Lambda_n$ which
are in the same block as
$\lambda$. 

\begin{proposition}\label{r-staircase block}
Consider the r-staircase partition $\rho_r =(r,r-1,r-2,\dots,2,1)$ in
$\Lambda_n$ where $r$ is such that $2r-1<p$ and
$\frac{r(r+1)}{2}+p-2r >n$. Then $\lambda \in \Lambda_n(\rho_r)$ if and
only if the $2$-core of $\lambda$ is $\rho_r$.
\end{proposition}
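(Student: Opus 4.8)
The plan is to prove both directions of the equivalence, establishing that the block of $\rho_r$ in $\Lambda_n$ consists of exactly those partitions sharing its $2$-core. The key input will be Proposition \ref{prop decomposition numbers non-zero}, which is available because $\mu = \rho_r$ satisfies its hypotheses (its $2$-core is $\rho_r$ itself, and the numerical conditions $2r-1<p$ and $\frac{r(r+1)}{2}+p-2r>n$ hold by assumption). I would first dispose of the easier containment. If $\lambda$ has the same $2$-core $\rho_r$ as $\rho_r$, then $\lambda$ and $\rho_r$ differ by successive additions of rim $2$-hooks, i.e. by adding two boxes in a single row or column at each step, so repeated application of Proposition \ref{removing horizontal two hook} puts $\lambda$ in the same block as $\rho_r$. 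This gives the implication that same $2$-core forces membership in $\Lambda_n(\rho_r)$.

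For the reverse implication I would argue that membership in the block forces the $2$-core to be $\rho_r$. The block of $\rho_r$ is generated by the linking relation on $\Lambda_n'$ coming from common composition factors of standard modules, extended to $\Lambda_n$ via Corollary \ref{stdinP}. The crucial observation is that Proposition \ref{prop decomposition numbers non-zero}, applied with $\mu = \rho_r$ (note $\rho_r \in \Lambda_n'$ since a $2$-core is automatically $p$-restricted when $2r-1<p$, and it is nonempty for $r\geq 1$), controls \emph{both} the standard modules in which $L_n(\rho_r)$ appears and the composition factors of $W_n(\rho_r)$: any $\lambda$ with $[W_n(\lambda):L_n(\rho_r)]\neq 0$ must satisfy $\lambda\subseteq\rho_r$ with the same $2$-core as $\rho_r$, and any $\nu$ with $[W_n(\rho_r):L_n(\nu)]\neq 0$ must satisfy $\rho_r\subseteq\nu$ with the same $2$-core. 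The heart of the matter is that $\rho_r$ is \emph{minimal} among partitions with its $2$-core under $\subseteq$ (you cannot strictly contain another partition with the same $2$-core, since removing a rim $2$-hook strictly shrinks it), so $\lambda\subseteq\rho_r$ together with equal $2$-cores forces $\lambda=\rho_r$; dually $\rho_r\subseteq\nu$ with equal $2$-cores forces $\nu=\rho_r$ only when $|\nu|=|\rho_r|$, but in general $\nu$ is obtained from $\rho_r$ by adding rim $2$-hooks and hence automatically shares its $2$-core.

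The main obstacle is controlling how the linking relation propagates across partitions of different sizes, since $\Lambda_n$ contains partitions of all $t$ with $n-t$ even. I would handle this by induction along the block graph: suppose $\lambda$ is linked to $\rho_r$, so there is a chain $\rho_r = \lambda_0, \lambda_1, \dots, \lambda_s = \lambda$ in which consecutive entries share a composition factor of some standard module. I would show by induction on the position in the chain that every $\lambda_j$ has $2$-core $\rho_r$. For the inductive step, if $\lambda_j$ has $2$-core $\rho_r$, then by Proposition \ref{same p-core} $\lambda_j$ is already in $\Lambda_n(\rho_r)$; and since each $L_n(\nu)$ appearing in a standard module that also contains a factor in the block of $\rho_r$ must, by Proposition \ref{prop decomposition numbers non-zero} applied in the appropriate direction, have $2$-core $\rho_r$, the next partition $\lambda_{j+1}$ inherits the $2$-core $\rho_r$. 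I expect the delicate point to be verifying that the hypotheses of Proposition \ref{prop decomposition numbers non-zero} transfer correctly at each link—specifically that whenever two partitions $\mu,\nu$ are linked, one can route the argument through a standard module having $L_n(\rho_r)$ or some $L_n(\nu')$ with $2$-core $\rho_r$ as a composition factor—so that the staircase-minimality argument can be invoked uniformly. Once this bookkeeping is in place, combining both implications yields the stated equivalence.
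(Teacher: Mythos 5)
Your forward direction (same $2$-core $\Rightarrow$ same block, via a chain of $2$-hook additions and Proposition \ref{removing horizontal two hook}) is exactly the paper's argument, and you have correctly identified Proposition \ref{prop decomposition numbers non-zero} as the engine for the converse. The gap is in how you propagate information along the block. You assert that the block of $\rho_r$ is ``generated by the linking relation \ldots coming from common composition factors of standard modules,'' and then run an induction along such a chain. For a cellular algebra that generating claim follows from BGG reciprocity in its untwisted form, but $A_n$ is only standardly based, and here BGG reciprocity carries a twist: $(P_n(\lambda):W_n(\mu)) = [W_n(\mu^T):L_n(\lambda^M)]$ (Theorem \ref{bgg}). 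Consequently the relation that actually generates the blocks is the Cartan matrix, which by Corollary \ref{cartan} factors as $C_{\lambda\nu} = \sum_{\gamma} D_{\gamma^T\lambda^M}D_{\gamma\nu}$: a single ``link'' between $\lambda$ and $\nu$ passes through a \emph{pair} of standard modules $W_n(\gamma^T)$ and $W_n(\gamma)$, with $\lambda$ replaced by its Mullineux conjugate. It is not clear (and you do not show) that the plain standard-module linking relation you use coincides with the block relation; indeed connecting $\gamma$ to $\gamma^T$ and $\lambda$ to $\lambda^M$ inside a single standard-module chain is essentially what one is trying to prove. You flag this as ``the delicate point'' of bookkeeping but leave it unresolved, and that is precisely where the proof is incomplete.

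The paper closes this gap by working directly with $[P_n(\lambda):L_n(\mu)]$ via Corollary \ref{cartan}: for $\lambda$ with $2$-core $\rho_r$, Lemma \ref{Lemma p-core} forces $\lambda$ to be a $p$-core, so $\lambda^M=\lambda^T$, which again has $2$-core $\rho_r$ (as $\rho_r$ is self-transpose); then Proposition \ref{prop decomposition numbers non-zero} applied first to $[W_n(\gamma^T):L_n(\lambda^M)]$ and then to $[W_n(\gamma):L_n(\mu)]$ (noting that $\gamma$ is then itself a $p$-core, hence in $\Lambda_n'$) kills every term of the sum when $\mu$ has a different $2$-core, and symmetrically for $[P_n(\mu):L_n(\lambda)]$. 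Your chain induction would become correct if each ``link'' were replaced by the two-step, Mullineux-twisted link coming from Corollary \ref{cartan}; as written, the argument rests on an unjustified description of the block relation. A minor further remark: the minimality of $\rho_r$ under inclusion plays no role here --- only the $2$-core conclusion of Proposition \ref{prop decomposition numbers non-zero} is needed.
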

\begin{proof}
Assume that the $2$-core of $\lambda \in \Lambda_n$ is $\rho_r$. Then we can
find a chain of partitions \[ \rho_r=
\lambda^{(0)}\subset\lambda^{(1)} \subset \dots \subset
\lambda^{(l)}=\lambda, \] such that each $\lambda^{(i)}$ is obtained
from $\lambda^{(i-1)}$ by adding a vertical or horizontal
$2$-hook.  Then it follows from Proposition
\ref{removing horizontal two hook} that $\lambda$ and
$\rho_r$ belong to the same block.

Now consider an arbitrary $\mu$ in $\Lambda_n'$ with the $2$-core of $\mu$
not equal to $\rho_r$. We will show that \[ [ P_n(\lambda):L_n(\mu) ]
=0 \text{ and } [ P_n(\mu):L_n(\lambda) ]=0,
\] 
for all $\lambda \in \Lambda_n'$ which have as $2$-core $\rho_r$. This
implies that 
$$\Lambda_n(\rho_r)= \{\lambda \in \Lambda_n \mid \text{t	he $2$-core of 
$\lambda$ is } \rho_r\}.$$  Using Corollary \ref{cartan}, we find 
$$
[ P_n(\lambda):L_n(\mu) ] =
 \sum_{\gamma \in \Lambda_n}  [W_n(\gamma^T) :
  L_n(\lambda^M)][W_n(\gamma): L_n(\mu) ]. 
$$
Observe first that $\lambda^M=\lambda^T$, since $\lambda$ is a $p$-core by Lemma \ref{Lemma p-core}. Therefore $\lambda^M$  also has $\rho_r$ as $2$-core. 
We know that $[W_n(\gamma^T) : L_n(\lambda^M)]$ is non-zero only if
$\gamma^T$ has the same $2$-core as $\lambda^M$ from Proposition
\ref{prop decomposition numbers non-zero}. This implies that $\gamma$
has $\rho_r$ as $2$-core. But then $ [W_n(\gamma): L_n(\mu) ]$ is
non-zero only if $\gamma$ and $\mu$ have the same $2$-core, which is
impossible since the $2$-core of $\mu$ is not equal to $\rho_r$. So we
conclude $[ P_n(\lambda):L_n(\mu) ]=0$. In a similar way we find $[
  P_n(\mu):L_n(\lambda) ]=0.$ This shows that $\mu$ is not in the same block as $\lambda$. 
Note that we also have that any $\mu\in \Lambda_n$ (not necessarily in $\Lambda_n'$) with $2$-core different from $\rho_r$ would also be in a different block from $\lambda$ using Proposition \ref{prop decomposition numbers non-zero}.
\end{proof}

\begin{proposition}\label{One block}
Consider $\lambda \in \Lambda_n$ which has as $2$-core an $r$-staircase with
$2r-1\geq p$ or $ \frac{r(r+1)}{2}+p-2r \leq n$.  Then
$\lambda$ belongs to the same block as the empty partition $\varnothing$ if $n$
is even or to the same block as the partition $(1)$ if $n$ is odd.
\end{proposition}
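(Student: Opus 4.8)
The plan is to first reduce the statement to the case $\lambda=\rho_r$ and then to descend on $r$. As in the proof of Proposition~\ref{r-staircase block}, any $\lambda\in\Lambda_n$ with $2$-core $\rho_r$ is obtained from $\rho_r$ by successively adding horizontal or vertical rim $2$-hooks, so Proposition~\ref{removing horizontal two hook} places $\lambda$ in the block of $\rho_r$. Thus it suffices to prove that every staircase $\rho_r$ satisfying the hypotheses lies in the block of $\varnothing$ when $\frac{r(r+1)}{2}$ is even, and of $(1)$ when it is odd (these are the two possibilities since $\frac{r(r+1)}{2}=|\rho_r|\equiv n \pmod 2$).

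I would prove this by downward induction on $r$, the base cases $r\in\{0,1\}$ being $\varnothing$ and $(1)$ themselves. For the inductive step I first produce a partition $\mu\in\Lambda_n$ with $2$-core $\rho_r$ which is \emph{not} a $p$-core. If $2r-1\geq p$ I simply take $\mu=\rho_r$, whose largest hook has length $2r-1\geq p$. If instead $\frac{r(r+1)}{2}+p-2r\leq n$, I enlarge $\rho_r$ by adding $\frac{p-2r+1}{2}$ horizontal $2$-hooks along the first row, obtaining $\mu=(p-r+1,r-1,r-2,\dots,1)$, whose largest hook length is exactly $p$. Since $p$ is odd and $n\equiv|\rho_r|\pmod 2$, the bound $n\geq |\rho_r|+p-2r$ in fact forces $n\geq |\rho_r|+p-2r+1=|\mu|$, so $\mu\in\Lambda_n$; and $\mu$ lies in the block of $\rho_r$ by Proposition~\ref{removing horizontal two hook}.

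Next comes the key move, which changes the $2$-core while preserving the $p$-core. Let $c$ be the $p$-core of $\mu$ and $w\geq 1$ its $p$-weight, and set $\sigma=(c_1+wp,c_2,c_3,\dots)$, obtained by absorbing all $w$ rim $p$-hooks as horizontal strips into the first row of $c$. Then $\sigma$ has the same $p$-core $c$ and $|\sigma|=|\mu|\leq n$, so by Proposition~\ref{same p-core} $\sigma$ is in the block of $\mu$. A direct abacus computation of the $2$-core of $\sigma$ shows it is a staircase $\rho_{r'}$ with $r'<r$ of the same parity as $r$: for example when $w=1$ and $c=\rho_{r-2}$ one gets $\rho_{r'}=\rho_{r-4}$, while when $c=\varnothing$ one lands on the base staircase at once. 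By Proposition~\ref{removing horizontal two hook}, $\sigma$ lies in the block of $\rho_{r'}$, so $\rho_r$ and $\rho_{r'}$ share a block, and the induction hypothesis applies to $\rho_{r'}$ (a comparison of the two defining inequalities shows a large staircase never descends to a "small" one, so $\rho_{r'}$ is again large or is one of the base cases).

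The main obstacle is the $2$-core computation for $\sigma$: one must verify that concentrating all the $p$-hooks into a single long first row strictly decreases the staircase index while keeping the parity, and then track the bookkeeping—that all intermediate partitions stay in $\Lambda_n$, that the descent terminates exactly at $\varnothing$ or $(1)$, and that the small residual cases $r\in\{2,3\}$ (where the formula $\rho_{r-4}$ degenerates) go directly to the correct base. The delicate points are all parity coincidences forced by $p$ being odd: they are precisely what make the size bound have the right parity and what select between the two base blocks $\varnothing$ and $(1)$ according to the period-$4$ behaviour of $|\rho_r|\bmod 2$.
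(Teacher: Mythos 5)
Your route is genuinely different from the paper's and, with one repair, it works. The paper inducts on $|\lambda|$, producing at each step an explicit partition with exactly two fewer boxes in the same block (padding the first row so that the rim has length exactly $p$, sliding that single rim $p$-hook into the first row, then stripping a horizontal $2$-hook); you instead reduce to staircases and induct on the staircase index, absorbing \emph{all} rim $p$-hooks into the first row in one move. Both arguments rest on the same ingredients (Propositions \ref{removing horizontal two hook} and \ref{same p-core}, plus Lemma \ref{Lemma p-core} to guarantee the new $2$-core again satisfies the hypothesis), and your reduction to $\rho_r$, the parity bookkeeping, and the construction of $\mu=(p-r+1,r-1,\dots,1)$ when $2r-1<p$ all match the paper's computations. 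What your approach buys is a shorter descent (one step per staircase index rather than per pair of boxes); what it costs is that the partition $\sigma$ you must analyse is less explicit, since for $\mu=\rho_r$ the $p$-core $c$ and weight $w$ of a staircase have no simple closed form.

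That is exactly where the one real gap sits: you flag ``$r'<r$'' as the main obstacle but do not prove it, and your only computed instance ($w=1$, $c=\rho_{r-2}$) is the easy case. The claim is true, and you can close it by splitting according to which hypothesis produced $\mu$. If $\mu=\rho_r$ (case $2r-1\geq p$; note $w\geq 1$ because every odd number up to $2r-1$, in particular $p$, occurs as a hook length of $\rho_r$), then $|\rho_{r'}|\leq|\sigma|=|\rho_r|$ forces $r'\leq r$, and $r'=r$ would force $\sigma=\rho_r$, which is impossible since $\sigma_1-\sigma_2\geq wp\geq 3$ while a staircase with $r\geq 2$ has first-row overhang $1$; no abacus computation is needed here. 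If instead $\mu=(p-r+1,r-1,\dots,1)$, then $c=\rho_{r-2}$ and $w=1$, and the computation you sketch (yielding $\rho_{r-4}$, or a base case for $r\leq 3$) applies verbatim. Finally, I would replace ``a comparison of the two defining inequalities'' by the cleaner observation that $\sigma\in\Lambda_n$ has positive $p$-weight, so Lemma \ref{Lemma p-core} applied to $\sigma$ shows its $2$-core $\rho_{r'}$ cannot satisfy both $2r'-1<p$ and $\frac{r'(r'+1)}{2}+p-2r'>n$; this is the same device the paper uses for its partition $\nu$.
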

\begin{proof}
We will prove the proposition using induction on the number of boxes
in the Young diagram corresponding to the partition. The induction
base is trivially satisfied. We will now show that $\lambda$ is
contained in the same block as a
partition with contains two boxes fewer than $\lambda$ and which has
as $2$-core an $r' $-staircase which also satisfies $2r'-1\geq p$ or $
\frac{r'(r'+1)}{2}+p-2r' \leq n$. By the induction hypothesis, it
then follows that $\lambda$ is contained in the same block as
$\varnothing$ if $n$ is even or as $(1)$ if $n$ is odd.

First suppose that $\lambda$ is not a $2$-core. Then there is a
horizonal or vertical $2$-hook which can be removed to leave a new
partition $\mu$ with the same $2$-core. By Proposition \ref{removing
  horizontal two hook} $\lambda$ and $\mu$ lie in the same block and $\mu$ has the same $2$-core as $\lambda$.

Next suppose that $\lambda$ is a $2$-core, but is not a $p$-core. Then
$\lambda$ is an $r$-staircase and hence any removable rim $p$-hook
cannot lie entirely in the first row. As a removable rim $p$-hook must
exist, we can construct a new partition $\mu$ by removing from
$\lambda$ this rim $p$-hook and adding $p$ boxes to the first
row. Then $\lambda$ and $\mu$ belong to the same block by
Proposition \ref{same p-core} since $\lambda$ and $\mu$ have the same
$p$-core and $\abs{\lambda}=\abs{\mu}$. It is also clear that we can
remove a horizontal $2$-hook from the first row of $\mu$ to obtain a new partition
$\nu$. Then Proposition \ref{removing horizontal two hook} implies
that $\lambda$ and $\nu$ are in the same block. Moreover,
since $\mu$ has a removable rim $p$-hook, $\mu$, and thus also $\nu$,
can not have a $2$-core with $2r-1<p$ and $\frac{r(r+1)}{2}+p-2r >n$
because these conditions would imply that $\mu$ is a $p$-core by Lemma
\ref{Lemma p-core}.

The only case we did not cover yet is when $\lambda$ is a $p$-core and
a $2$-core at the same time. Since $\lambda$ is a $2$-core it is an
$r$-staircase, i.e. $\lambda = (r, r-1, r-2, \dots, 2,1)$.  Because
$\lambda$ is also a $p$-core it follows that $2r-1 <p$ and thus
$2r\leq p-1$ since $p$ is odd. Our condition on $\lambda$ then implies
that $\frac{r(r+1)}{2}+p-2r \leq n.$ Note that this inequality is actually strict, since the parity of the left hand side is different from the parity of the right hand side. This follows  because $\rho_r$ is contained in $\Lambda_n$, and therefore $\frac{r(r+1)}{2}$ and $n$ have the same parity. We can thus add $p-2r+1$ boxes to
the first row of $\lambda$ to obtain a new partition $\mu \in \Lambda_n$.

Now $\lambda$ and $\mu$ are in the same block
by Proposition \ref{removing horizontal two hook}.  Furthermore $\mu$
has a removable rim $p$-hook (consisting of all boxes on the rim of $\mu$) and the same $p$-core as the partition
$\nu = (r-2+p, r-3, r-4, \dots, 1)$ obtained from $\mu$ by removing
the rim $p$-hook and adding $p$ boxes to the first row. Then
$\mu$ and $\nu$ belong to the same block since they have the
same $p$-core and $\abs{\mu}=\abs{\nu}$, while $\nu$ belongs to
the same block as $\kappa$ where $\kappa=(3r-5, r-3, r-4, \dots,
1)$ is obtained from $\nu$ by removing $p-2r+3$ boxes in the first
row. We conclude that $\lambda$ is in the same block as
$\kappa$ and $\abs{\kappa}= \abs{\lambda}-2$.  Furthermore, from
Lemma \ref{Lemma p-core} and the fact that $\nu$ is not a $p$-core, it
follows that the $2$-core of $\nu$ and thus also of $\kappa$ satisfies
the conditions on $r$.

Hence for every $\lambda$ with $\abs{\lambda} \geq 2$ there exists a
$\mu$ such that $\abs{\mu}= \abs{\lambda}-2$ and $\lambda$ is
contained in the same block as $\mu$ and for which the $2$-core
of $\mu$ is an $r$-staircase which satisfies $2r-1\geq p$ or $
\frac{r(r+1)}{2}+p-2r \leq n$.
\end{proof}

For each $\lambda\in \Lambda_n'$, denote by $B_n(\lambda)$ the block algebra of $A_n$ containing the simple module $L_n(\lambda)$.

\begin{theorem} \label{Block decomposition}
The block decomposition of $A_n$ is given by
\[
B_n(\kappa) \oplus \bigoplus_{r} B_n(\rho_r),
\]
where the sum is over all $r\geq 2$ such that $2r-1<p$,
$\frac{r(r+1)}{2}+p-2r >n$ and $\frac{r(r+1)}{2}\vdash n-2k$ for some
$k\geq 0$. Here
$\rho_r$ is the $r$-staircase partition and
$\kappa=(1)$ if $n$
is odd or $\kappa = (1,1)$ if $n$ is even.
 
In particular if $n\geq (p^2+7)/8$, there is only one block.
\end{theorem}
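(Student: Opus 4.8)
The plan is to read off the block decomposition from Propositions \ref{r-staircase block} and \ref{One block}, organised by $2$-core. Every $\lambda \in \Lambda_n$ has a $2$-core equal to some staircase $\rho_r$, so I would first stratify $\Lambda_n$ by the value of $r$ and then decide, for each $r$, whether the class of partitions with $2$-core $\rho_r$ forms its own block or is absorbed into a single large block. Writing $f(r) = \frac{r(r+1)}{2} + p - 2r$ for brevity, the governing dichotomy is whether $\rho_r$ is \emph{small} (i.e. $2r-1 < p$ and $f(r) > n$) or not.

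For $r \geq 2$ with $\rho_r \in \Lambda_n$ small, Proposition \ref{r-staircase block} already shows that $B_n(\rho_r)$ consists precisely of the partitions with $2$-core $\rho_r$ and that no partition outside this class is linked to it (via the vanishing $[P_n(\lambda):L_n(\mu)] = [P_n(\mu):L_n(\lambda)] = 0$ proved there). These classes are manifestly disjoint for distinct $r$, so they contribute the pairwise-distinct summands $\bigoplus_r B_n(\rho_r)$. It then remains to show that every remaining $\lambda$ lies in one single block $B_n(\kappa)$.

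The bookkeeping around the distinguished partition $\kappa$ is the step needing care. Suppose the $2$-core $\rho_r$ of $\lambda$ is not small; then $\lambda$ lies in no staircase block, and by Proposition \ref{One block} it lies in the block of $\varnothing$ (when $n$ is even) or of $(1)$ (when $n$ is odd). To identify this with $B_n(\kappa)$, where $\kappa = (1,1)$ for $n$ even and $\kappa = (1)$ for $n$ odd (both in $\Lambda_n'$, with $2$-cores $\varnothing$ and $(1)$), I must check that $\kappa$ itself lies in that block, i.e. that $\rho_0$ (resp. $\rho_1$) is not small. A short estimate shows that the existence of any non-small $\rho_r \in \Lambda_n$ forces $p \leq n$ (resp. $p \leq n+1$), which makes $\rho_0$ (resp. $\rho_1$) non-small, as required. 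Conversely, if $\rho_0$ (resp. $\rho_1$) is small then $p > n$ (resp. $p > n+1$), and since only staircases with $\frac{r(r+1)}{2} \equiv n \pmod 2$ occur in $\Lambda_n$ (which excludes the borderline $r=2$ when $n$ is even), every $\rho_r \in \Lambda_n$ is then small; in this regime $B_n(\kappa)$ is the isolated block with $2$-core $\rho_0$ (resp. $\rho_1$) supplied by Proposition \ref{r-staircase block} for $r \in \{0,1\}$, and the statement collapses to one block per $2$-core. In both regimes $B_n(\kappa)$ is a single block absorbing exactly the partitions missed by the staircase blocks, and disjointness of the underlying $2$-core classes shows all the listed blocks are distinct and exhaustive.

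For the final assertion I would maximise $f$ over the admissible range $2 \leq r \leq \frac{p-1}{2}$ cut out by $2r-1 < p$. Since $f(r) = \tfrac12(r^2 - 3r) + p$ is increasing for $r \geq 2$, its maximum there is $f\!\left(\tfrac{p-1}{2}\right) = \frac{p^2+7}{8}$; hence $n \geq \frac{p^2+7}{8}$ forces $f(r) \leq n$ for every admissible $r$, eliminating all staircase summands and leaving the single block $B_n(\kappa)$. The main obstacle is not any new representation-theoretic input — that content is entirely contained in Propositions \ref{r-staircase block} and \ref{One block} — but rather the careful case-checking around $r \in \{0,1\}$ together with the parity constraints, needed to place $\kappa$ correctly and to confirm that the small/non-small dichotomy is genuinely exhaustive and disjoint.
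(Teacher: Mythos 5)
Your proof is correct and follows essentially the same route as the paper: both obtain the decomposition by combining Propositions \ref{r-staircase block} and \ref{One block} and then maximising $\frac{r(r+1)}{2}+p-2r$ over $2\leq r\leq \frac{p-1}{2}$ to get the bound $\frac{p^2+7}{8}$. The only cosmetic difference is that the paper links $\varnothing$ and $(1,1)$ via the explicit observation $W_2(\emptyset)\cong L_2(1,1)$, whereas you recover this from the two propositions through your case analysis around $r\in\{0,1\}$.
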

\begin{proof}
The block decomposition follows immediately by combining Proposition \ref{r-staircase block} with Proposition
\ref{One block} and noting that the partitions $\emptyset$ and $(1,1)$ are in the same block (since $W_2(\emptyset) \cong L_2(1,1)$). 
Observe that if $n\geq (p^2+7)/8$,
then $2r-1< p$ implies
\begin{align*}
 \frac{r(r+1)}{2}+p-2r \leq \frac{(p-1)(p+1)}{8} + 1 = \frac{p^2+7}{8}\leq n,
\end{align*}
since $r(r+1)/2 -2r$ is an increasing function of $r$ for $r\geq
2$. In particular there are no $r$ satisfying the condition of the
summation, and so we only have one block.
\end{proof}

\section*{Acknowledgement}
SB thanks Kevin Coulembier for helpful discussions and comments. 
SB is supported by a BOF Postdoctoral Fellowship from Ghent University.

%\noindent
%SB: Department of Mathematical Analysis, Ghent University, Krijgslaan 281, 9000 Gent, Belgium;
%E-mail: {\tt Sigiswald.Barbier@UGent.be}

\date{}

\end{document}